\renewcommand{\leq}{\leqslant}
\renewcommand{\geq}{\geqslant}
\newcommand{\dd}{\mathrm{d}}
\newcommand{\Ec}{\mathcal E}
\newcommand{\Hc}{\mathcal H}
\newcommand{\Nc}{\mathcal N}
\newcommand{\Rc}{\mathcal R}
\newcommand{\Tc}{\mathcal T}
\newcommand{\LL}{\mathbb L}
\newcommand{\N}{\mathbb N}
\newcommand{\R}{\mathbb R}
\newcommand{\Z}{\mathbb Z}
\newcommand{\Nast}{\N^\ast}
\newcommand{\Eb}{\mathbf{E}}
\newcommand{\Pb}{\mathbf{P}}
\newcommand{\1}{\mathds 1}
\renewcommand{\epsilon}{\varepsilon}
\renewcommand{\phi}{\varphi}
\newcommand{\argmin}{\operatornamewithlimits{argmin}}
\newcommand{\argmax}{\operatornamewithlimits{argmax}}
\newcommand{\KL}{\mathrm{KL}}
\newcommand{\TV}{\mathrm{TV}}
\newcommand{\var}{\operatorname{Var}}
\newcommand{\supp}{\mathrm{supp}}
\newcommand{\defeq}{\vcentcolon=}
\newcommand{\eqdef}{=\vcentcolon}
   \newcommand{\abs}[1]{\vert#1\vert}
\renewcommand{\complement}{\mathsf{c}}
\newcommand{\sol}{\theta}
\newcommand{\soltilde}{\widetilde{\sol}}
\newcommand{\solcirc}{\sol^\circ}
\newcommand{\ev}{\lambda}
\newcommand{\Op}{A}
\newcommand{\csol}{\Theta}
\newcommand{\cev}{\Ec}
\newcommand{\qfunc}{\mathfrak q}
\newcommand{\qhat}{{\widehat{\mathfrak{q}}}}
\newcommand{\qtilde}{{\widetilde{\qfunc}}}
\newcommand{\setAlt}{\Theta_1}
\newcommand{\setpm}{\{ \pm 1 \}}
\newcommand{\Yp}{\widetilde Y}
\newcommand{\Ypp}{\bar{Y}}
\newcommand{\kepsilon}{{k_\epsilon}}
\newcommand{\ksigma}{{k_\sigma}}
\newcommand{\kstar}{k^\star}
\newcommand{\qfuncsd}{{\qfunc}^{\mathrm{sd}}}
\newcommand{\qfuncgof}{{\qfunc}^{\mathrm{gof}}}
\newcommand{\ksd}{\kappa_1}
\newcommand{\qhatksd}{\qhat_{\ksd}}
\newcommand{\kgof}{\kappa_2}
\newcommand{\qhatkgof}{\qhat_{\kgof}}
\newcommand{\Deltahat}{\widehat{\Delta}}
\newcommand{\Deltahatsd}{\widehat{\Delta}^{\mathrm{sd}}}
\newcommand{\Deltahatgof}{\widehat{\Delta}^{\mathrm{gof}}}
\newcommand{\Ctilde}{\widetilde{C}}
\newcommand{\ctilde}{\widetilde{c}}\usepackage{lmodern}
\definecolor{refkey}{rgb}{0.0,0.0,0.0}
\definecolor{labelkey}{rgb}{0.0,0.0,0.0}
\theoremstyle{plain}
\newtheorem{theorem}{Theorem}[section]
\newtheorem{proposition}[theorem]{Proposition}
\newtheorem{lemma}[theorem]{Lemma}
\newtheorem{corollary}[theorem]{Corollary}
\theoremstyle{definition}
\newtheorem{definition}[theorem]{Definition}
\newtheorem{assumption}[theorem]{Assumption}
\theoremstyle{remark}
\newtheorem{remark}[theorem]{Remark}
\newenvironment{smalign*}
 {\par$\!\aligned}
 {\endaligned$\par}
\title[Quadratic functionals in inverse problems with unknown operator]{Rate optimal estimation of quadratic functionals in inverse problems with partially unknown operator and application to testing problems}
\author{Martin Kroll}
\address{CREST, ENSAE, Institut Polytechnique de Paris}
\email{martin.kroll@ensae.fr}
\thanks{Ce travail a \'et\'e r\'ealis\'e dans le cadre du laboratoire d'excellence ECODEC, portant la r\'ef\'erence ANR-11-LABX-0047.}
\date{\today}
\subjclass[2010]{62G05 (primary), and 62G10 (secondary)}
\keywords{Inverse problem, unknown eigenvalues, minimax theory, rate optimality, truncated series estimator, non-parametric testing, goodness-of-fit testing, signal detection.}
\begin{document}

\begin{abstract}
	We consider the estimation of quadratic functionals in a Gaussian sequence model where the eigenvalues are supposed to be unknown and accessible through noisy observations only.
	Imposing smoothness assumptions both on the signal and the sequence of eigenvalues, we develop a minimax theory for this problem.
	We propose a truncated series estimator and show that it attains the optimal rate of convergence if the truncation parameter is chosen appropriately.
	Consequences for testing problems in inverse problems are equally discussed: in particular, the minimax rates of testing for signal detection and goodness-of-fit testing are derived.
\end{abstract} 
\maketitle

\section{Introduction}

\subsection{Problem statement}
We consider the Gaussian sequence model
\begin{align}
X_j &= \ev_j \sol_j + \epsilon \xi_j , \quad j \in \Nast = \{ 1,2,\ldots \}\label{eqObsX}, \quad \text{and}\\
Y_j &= \ev_j + \sigma \eta_j, \quad j \in \Nast\label{eqObsY}
\end{align}
where $(\xi_j)_{j \in \Nast}$, $(\eta_j)_{j \in \Nast}$ are independent random vectors with independent standard Gaussian components and $\epsilon, \sigma \in (0,1)$ are known noise levels.
Given some known and fixed 'reference point' $(\solcirc_j)_{j \in \Nast}$, we will in this work address the following two questions:
\begin{enumerate}\item Let $(\omega_j)_{j \in \Nast}$ be some known sequence of weights. How can we estimate the value of the quadratic functional
 \begin{equation*}
	\qfunc( \theta ) = \sum_{j=1}^\infty \omega_j^2 (\sol_j - \solcirc_j)^2
\end{equation*}
from data $(X_j)_{j \in \Nast}$ and $(Y_j)_{j \in \Nast}$ in an optimal way?
\item How can we test the null hypothesis $\sol = \solcirc$ against the alternative $\sol \in \setAlt$ for some $\setAlt$ with $\solcirc \notin \setAlt$?
\end{enumerate}
Concerning both questions, the sequence $(\lambda_j)_{j\in \Nast}$ is a nuisance parameter and only accessible by means of the observations $(Y_j)_{j \in \Nast}$.
Specific choices include both the case $\lambda_j \equiv 1$ (then, \eqref{eqObsX} is the classical Gaussian sequence model with \emph{direct} observations) and the case $\lambda_j \to 0$ making the inverse problem of reconstructing $\sol$ \emph{ill-posed} (see \cite{cavalier2011inverse}, Definition~1.1\ for a definition of well-/ill-posedness).
Precise assumptions on all model parameters will be given in Section~\ref{secMeth} below.
To the best of our knowledge, the model given by \eqref{eqObsX} and \eqref{eqObsY} was introduced explicitly in~\cite{cavalier2005adaptive} for the first time, and is also referred to as an inverse problem with \emph{partially unknown operator} \cite{johannes2013partially,marteau2017minimax}.
In the context of inverse problems, in its general form given by an operator equation $X = \Op \sol + \epsilon \xi$, this model provides something between the classical assumption that the operator $\Op$ is known~\cite{donoho1995nonlinear,cavalier2011inverse} and the assumption that the operator is only accessible by a blurred observation $Y = \Op + \sigma \Xi$~\cite{efromovich2001inverse,hoffmann2008nonlinear}:
it arises by the structural assumption that the operator $\Op^\ast\Op$ is diagonal with eigenvalues $(\lambda_j^2)_{j \in \Nast}$.
We refer to the references~\cite{cavalier2005adaptive,johannes2013partially,marteau2017minimax} for a more detailed derivation and further motivation of the model.
Note that, whereas the non-parametric estimation of the parameter $\sol$ itself from observations~\eqref{eqObsX} and \eqref{eqObsY} (including adaptation) was intensively studied in \cite{cavalier2005adaptive,johannes2013partially}, the estimation of quadratic functionals has not yet been considered, and also the question of non-parametric testing has been investigated only recently (see the following Subsection~\ref{subsRelatedWork} for a discussion of related work).

\subsection{Related work}\label{subsRelatedWork}
Starting with the paper~\cite{bickel1988estimating}, the estimation of quadratic functionals has received a lot of attention in non-parametric statistics, in particular in models with direct observations~\cite{donoho1990minimax,fan1991estimation,gayraud1999wavelet,laurent2000adaptive,johnstone2001chi-square,johnstone2001thresholding,cai2005nonquadratic,laurent2005adaptive,cai2006optimal,klemela2006sharp,gine2008simple,rivoirard2008maxiset,collier2017minimax}.
In the context of statistical inverse problems, there is much less work dedicated to this problem.
\cite{butucea2007goodness} provides a goodness-of-fit test in a convolution model where the test statistic is based on the estimator of a quadratic functional.
The paper~\cite{butucea2011quadratic} considers observations as in~\eqref{eqObsX} but assumes the sequence of eigenvalues to be known.
Under this assumption, minimax upper bounds in terms of $\epsilon$ are derived for both ordinary smooth and supersmooth $\sol$.
In addition, the authors assume that their approach even provides optimal constants.
\cite{chesneau2011adaptive} considers adaptive estimation of the $\LL^2$-norm in a model where a convolution product of an
unknown function and a known function is corrupted by Gaussian noise.

The estimation of quadratic functionals is closely related to hypotheses testing since estimators of quadratic functionals provide natural building blocks for test statistics.
Starting with the seminal paper~\cite{ermakov1990minimax}, the theory of non-parametric testing in direct Gaussian sequence space models has been rigorously developed in a series of papers by Ingster~\cite{ingster1993asymptotically} (see also the monograph~\cite{ingster2003nonparametric}). 
In the domain of inverse problems, an increasing interest in theoretical results in the spirit of the book~\cite{ingster2003nonparametric} has arisen within the last decade~\cite{laurent2011testing,ingster2012minimax,marteau2015unified,marteau2017minimax}, partially motivated by applications coming from biology~\cite{bissantz2009testing} or astrophysics~\cite{lacour2014goodness}.
However, concerning inverse problems with partially unknown operator, the existing research literature reduces to the paper~\cite{marteau2017minimax} that considers the same model as in the present work.
In contrast to our approach in Section~\ref{secTesting} where we use the sum of type I and type II error in order to measure the performance of tests, the authors of \cite{marteau2017minimax} consider level-$\alpha$-tests (i.e., tests whose type I error is bounded from above by some prespecified $\alpha \in (0,1)$) and try to minimize the type II error under this constraint.
In this framework, the authors derive upper and lower bounds for the so-called \emph{separation rate}.
Their test statistic is also based on estimation of a quadratic functional but only the goodness-of-fit testing problem is considered.
The authors obtain a slight gap by a logarithmic factor between upper and lower bounds with respect to the noise level $\sigma$.
A main difference between the present paper and~\cite{marteau2017minimax} concerns the minimax methodology: \cite{marteau2017minimax} impose a smoothness condition on the sequence $(\lambda_j)_{j \in \Nast}$ (equivalent to our one introduced in Section~\ref{secMeth}) only in order to establish lower bounds, but the construction of their test statistic is independent of this smoothness.
Thus, their testing procedure is adaptive with respect to the sequence $(\lambda_j)_{j \in \mathbb N}$, whereas we assume the order of the decay of this sequence to be known.
Imposing this additional assumption, we are able to derive upper and lower bounds for the testing rate that match (without any logarithmic gap).
It might be of interest to explore to what extent the extra logarithmic factors in \cite{marteau2017minimax} might be unevitable in the adaptive scenario.
However, answering this question is outside the scope of this work and deferred to future research.

\subsection{Organisation and main contributions of the paper}
Let us summarize the main contributions of this paper. We emphasize in advance that all results of the paper are non-asymptotic.
\begin{itemize}
	\item We introduce a truncated series estimator of $\qfunc(\sol)$ (Section~\ref{secMeth}), and derive minimax upper bounds for this estimator in terms of the noise levels $\epsilon$ and $\sigma$ (Section~\ref{secUpper}).
	The construction of the estimator is based on a \emph{sample cloning} technique that has not been used before to construct estimators of quadratic functionals.
	\item We prove minimax lower bounds for the estimation of $\qfunc(\sol)$ from data \eqref{eqObsX} and \eqref{eqObsY}.
	These results show that the truncated series estimators is rate optimal provided that the truncation parameter is chosen appropriately.
	\item Our abstract results indicate an 'elbow effect' of the optimal rate of convergence in terms of the noise level $\sigma$ that is similar to the well-known elbow effect in $\epsilon$ \cite{fan1991estimation}.
	However, the rate in $\sigma$ is in general faster than the one in $\epsilon$ and the parametric rate $\sigma^2$ can be attained in cases where the non-parametric regime holds with respect to $\epsilon$.
	For instance, in the case that the signal belongs to a Sobolev class of index $p$ and the considered inverse problem is mildly ill-posed with degree of ill-posedness equal to $a$, the optimal rate of convergence will turn out to be
	\begin{equation*}
		\epsilon^2 \vee \epsilon^{16p/(4a+4p+1)} \vee \sigma^2 \vee \sigma^{4p/a}.
	\end{equation*}
	\item In Section~\ref{secTesting}, as a rather direct application of our results on the estimation of quadratic functionals we consider non-parametric testing problems of the type
	\begin{equation*}
		\Hc_0\colon \sol = \solcirc \in \csol \qquad \text{against} \qquad \Hc_1\colon \sol \in \csol, \Vert \sol - \solcirc \Vert_2 \geq r
	\end{equation*}
	for some $r > 0$.
	As already remarked by Marteau and Sapatinas~\cite{marteau2017minimax}, the case of signal detection ($\solcirc = 0$, Section~\ref{ssSignal}) and the one of goodness-of-fit testing $(\solcirc \neq 0$, Section~\ref{ssGoF}) have to be treated separately.
	For both problems, we derive the minimax rate of testing and propose a test statistic attaining this rate.
	In particular, in coincidence with the findings in~\cite{laurent2011testing}, it turns out that for the signal detection problem it is sufficient to consider the observation~\eqref{eqObsX} and construct a test statistic based on an estimator of a quadratic functional of $\soltilde = \ev \sol$.
	For the goodness-of-fit problem, however, the testing rate depends also on $\sigma$ and both observations, $X$ and $Y$, are taken into account for the construction of the test statistic.
\end{itemize} \section{Methodology}\label{secMeth}

\subsection{Notation} 
We frequently denote entire sequences by single letters when writing 'the sequence $a$' instead of 'the sequence $(a_j)_{j \in \Nast}$'.
Numerical operations on sequences like $a^{-1}$ are to be understood elementwise.
Throughout $C$ denotes a purely numerical constant and $C(\ldots)$ a constant that depends only on the parameters indicated within parentheses.
$x \lesssim y$ is shorthand for $x \leq Cy$, and we write $x \asymp y$ if $x \lesssim y$ and $y \lesssim x$ hold simultaneously.
Moreover, $x \asymp_\nu y$ means that $x\nu^{-1} \leq y \leq \nu x$.
We put $\llbracket x,y\rrbracket = [x,y] \cap \Z$ for $x,y \in \R$.

\subsection{Truncated series estimator}\label{subsTSE}

In order to define a truncated series estimators, we first generate two independent instances of the $Y$ sample by the following \emph{sample cloning} technique which is well-known in the context of aggregation (see~\cite{tsybakov2014aggregation}, Lemma~2.1): let $\widetilde{\eta}$ be a sequence of independent standard Gaussian random variables independent of $\xi$ and $\eta$.
For $j \in \Nast$, we put
\begin{equation*}
	\Yp_j = Y_j + \sigma \widetilde \eta_j \qquad \text{and} \qquad \Ypp_j = Y_j - \sigma \widetilde \eta_j.
\end{equation*}
Then $\Yp_j, \Ypp_j$ are i.i.d. $\Nc(\lambda_j, 2\sigma^2)$, and the price to pay for the availability of two independent samples is a doubling of the variance.
Based on the availability of the samples $\Yp = (\Yp_j)_{j \in \Nast}$, $\Ypp = (\Ypp_j)_{j \in \Nast}$ we define, for any $k \in \Nast$, the truncated series estimator
\begin{equation}\label{eqDefEstSeries}
\qhat_k = \sum_{j=1}^{k} \omega_j^2 \, \frac{U_j}{V_j} \1_{\Omega_j}
\end{equation}
where $U_j \defeq (X_j - \Yp_j\solcirc_j)^2 - \epsilon^2 - 2 (\solcirc_j)^2 \sigma^2$, $V_j \defeq \Ypp_j^2 - 2\sigma^2$ and $\Omega_j \defeq \{\Ypp_j^2 \geq 3\sigma^2\}$.
Note that $U_j$ and $V_j$ are unbiased estimators of $\lambda_j^2 (\sol_j-\solcirc_j)^2$ and $\lambda_j^2$, respectively, guaranteeing that the fraction $U_j/V_j$ is at least a consistent estimator of $(\sol_j-\solcirc_j)^2$.
In addition, due to the construction based on sample cloning, $U_j$ and $\1_{\Omega_j}/V_j$ are independent as they can be written as $U_j=f(X_j, \Yp_j)$ and $\1_{\Omega_j}/V_j=g(\Ypp_j)$ for non-random functions $f$ and $g$.
Inspired by~\cite{neumann1997effect}, the additional cut-off $\1_{\Omega_j}$ in \eqref{eqDefEstSeries} excludes too small values of $V_j$ that would otherwise lead to an unstable behaviour of the entire estimator.
As usual in non-parametric statistics, the value of the truncation parameter $k \in \Nast$ has to be chosen by the statistician and crucially effects the performance of the estimator.
In Section~\ref{secUpper}, we first derive an upper risk bound for $\qhat_k$ that holds for any $k \in \Nast$, and then take the minimizer of this bound to define our final estimator.
This specific choice will turn out to define a rate optimal estimator under mild assumptions (of course, the resulting estimator is not adaptive).
Let us note that, in order to derive a minimax optimal estimator only, other truncated series estimators could have been chosen.
The construction of our estimator, however, is motivated by our application to testing in Section~\ref{secTesting}.

\subsection{Minimax estimation}\label{subsMinimax}

Given sequences $\gamma$ and $\alpha$, let us define the $\ell^2$-ellipsoid
\begin{equation*}
	\csol = \csol(\gamma,L) = \bigg \{ \sol \in \ell^2 \colon \sum_{j=1}^\infty \gamma_j^2\sol_j^2 \leq L^2 \bigg \}
\end{equation*}
and the $\ell^2$-hyperrectangle
\begin{equation*}
  \cev = \cev(\alpha,d) = \big\{ \ev \in \ell^\infty \colon d^{-1}\alpha_j \leq \vert \lambda_j \vert \leq d \alpha_j \big\}.
\end{equation*}
We usually suppress the dependence of $\csol$ and $\cev$ on $\gamma, \alpha, L, d$ in the notation.
For the rest of the paper, we assume that $(\sol, \ev) \in \csol \times \cev$.

\begin{definition}[Minimax rate of estimation, minimax estimator]
	An estimator $\qhat$ of $\qfunc(\sol)$ \emph{attains the rate} $\psi_{\epsilon,\sigma}^2$ over the smoothness classes $\csol$ and $\cev$ if there exists a numerical constant $C > 0$ such that
	\begin{equation*}
		\sup_{\sol \in \csol} \sup_{\ev \in \cev} \Eb [ \left( \qhat - \qfunc(\sol) \right)^2 ] \leq C \psi_{\epsilon, \sigma}^{2}.
	\end{equation*}
	The rate $\psi_{\epsilon, \sigma}^2$ is called \emph{minimax optimal} if in addition
	\begin{equation}
		\inf_{\qtilde } \sup_{\sol \in \csol} \sup_{\ev \in \cev} \Eb [ \left( \qtilde - \qfunc(\sol) \right)^2 ] \geq c \psi_{\epsilon, \sigma}^{2}\label{eqMinimaxRiskLower}
	\end{equation}
	holds for some $c > 0$ where the infimum is taken over all estimators based on observations~\eqref{eqObsX} and~\eqref{eqObsY}.
\end{definition}
In this work, the minimax optimal rate is derived under the following assumption on the sequences $\alpha, \gamma$ and $\omega$.

\begin{assumption}\label{assSeq}
	The sequences $\alpha$ and $\omega \gamma^{-1}$ are non-increasing and normalized such that $\alpha_1=\gamma_1=\omega_1=1$.
\end{assumption}

Assumption~\ref{assSeq} is rather mild and satisfied by all the examples considered later.
The proof of Theorem~\ref{thmUpper} shows that $\omega_k^4\gamma_k^{-4}$ is the order of the squared bias of our estimator, and hence the convergence of $\omega\gamma^{-1}$ to zero ensures consistency as $\max \{\epsilon,\sigma \}$ tends to zero.
The following special choices of the sequences $\alpha$ and $\gamma$ satisfy Assumption~\ref{assSeq}, and will be used throughout the paper to illustrate the general results.
Concerning the sequence $\alpha$ we consider either
\begin{itemize}
	\item the case $\alpha_j \asymp j^{-a}$ for some $a > 0$ (the inverse problem is \emph{mildly ill-posed} and $a$ the \emph{degree of ill-posedness}), or
	\item the case $\alpha_j \asymp \exp(-ja)$ for some $a > 0$ (the inverse problem is \emph{severely ill-posed}).
\end{itemize}
Concerning the sequence $\gamma$ we consider either
\begin{itemize}
	\item the case $\gamma_j \asymp j^p$ for some $p > 0$ ($\csol$ is a \emph{Sobolev ellipsoid}), or
	\item the case $\gamma_j \asymp \exp(pj)$ for some $p > 0$ ($\csol$ is an \emph{ellipsoid of analytic functions}).
\end{itemize}
The same smoothness assumptions have equally been used for the purpose of illustration in~\cite{ingster2012minimax} and~\cite{marteau2017minimax} making our results directly comparable to the ones obtained in those papers.

\subsection{Minimax theory of testing}\label{subsMethTesting}
In Section~\ref{secTesting}, we consider the problem of testing the simple hypothesis $\sol = \solcirc$ against the composite alternative $\sol \in \setAlt$ with $\solcirc \notin \setAlt$ (more precisely, we test $(\sol,\ev) \in \{ \solcirc \} \times \cev$ against $(\sol,\ev) \in \setAlt \times \cev$).
Usually, the case $\solcirc = 0$ is referred to as \emph{signal detection} and the case $\solcirc \neq 0$ as \emph{goodness-of-fit testing}.
By definition, a \emph{test statistic} $\Delta$ is a $\{ 0,1 \}$-valued function based on the observations $(X,Y)$.
Its performance is measured by the sum of type I and maximal type II error, $\Pb_0(\Delta = 1) + \sup_{\sol \in \setAlt} \Pb_\sol(\Delta = 0)$, and the corresponding benchmark is the quantity
\begin{equation*}
	\inf_{\widetilde{\Delta}} \left\lbrace \Pb_0(\widetilde{\Delta} = 1) + \sup_{\sol \in \setAlt} \Pb_\sol(\widetilde{\Delta} = 0)\right\rbrace 
\end{equation*}
where the infimum is taken over all test statistics $\widetilde{\Delta}$.
It is well-known that, apart from smoothness assumptions, the null hypothesis $\solcirc$ must be separated from the alternative $\setAlt$ at least by a certain distance in order to make non-trivial testing possible.
In this spirit, we consider for $r > 0$ the testing problems
\begin{equation*}
	\Hc_0\colon \sol = \solcirc \in \csol \qquad \text{against} \qquad \Hc_1 \colon \sol - \solcirc \in \setAlt(r)
\end{equation*}
where $\setAlt(r) = \csol \cap \left\lbrace \sol \in \ell^2(\Nast) \colon \Vert \sol \Vert_2 \geq r \right\rbrace$.
Based on this definition of $\setAlt(r)$, we put
\begin{equation*}
	\Rc(r) =  \inf_{\widetilde{\Delta}} \left\lbrace \Pb_0(\widetilde{\Delta} = 1) + \sup_{\sol \in \setAlt(r)} \Pb_\sol(\widetilde{\Delta} = 0)\right\rbrace.
\end{equation*}
The central quantity of our interest is the \emph{minimax testing rate}.
\begin{definition}\label{defRateTesting}
	The quantity $\phi_{\epsilon, \sigma}^2 > 0$ is called \emph{minimax testing rate} if the following two conditions are fulfilled:
	\begin{enumerate}[(i)]
		\item\label{defRateTesting1} for any $\delta \in (0,1)$, there exists $C^\ast > 0$ such that for all $C > C^\ast$ it holds
		\begin{equation*}
			\Rc (C\phi_{\epsilon,\sigma}) \leq \delta,
		\end{equation*}
		\item\label{defRateTesting2} for any $\delta \in (0,1)$, there exists $C_\ast > 0$ such that for all $0 < c < C_\ast$ it holds
		\begin{equation*}
			\Rc (c\phi_{\epsilon,\sigma}) \geq 1 - \delta.
		\end{equation*}
	\end{enumerate} 
\end{definition}
Given this purely non-asymptotic definition, the strategy for deriving the minimax testing rate is as follows: in order to prove the upper bound given by Condition~\ref{defRateTesting1}, one takes an arbitrary $\delta > 0$ and proposes a test statistic $\Deltahat$ satisfying
\begin{equation*}
         \Pb_0(\Deltahat = 1) + \sup_{\sol \in \setAlt(C\phi_{\epsilon,\sigma})} \Pb_1(\Deltahat = 0) \leq \delta
\end{equation*}
for all $C$ sufficiently large.
The proof of the lower bound~\ref{defRateTesting2} is similar to the one of lower bounds for the estimation problem and is mainly based on the auxiliary Lemma~\ref{lemLowerTesting} in the appendix.
This two-step program is realized for signal detection ($\solcirc = 0$) and goodness-of-fit testing ($\solcirc \neq 0$) separately in Section~\ref{secTesting}. \section{Minimax upper bound}\label{secUpper}
Our first theorem provides an upper risk bound for the estimator $\qhat_k$ for arbitrary $k \in \Nast$.
\begin{theorem}\label{thmUpper}
	Let Assumption~\ref{assSeq} hold.
	Then, for any $k \in \Nast$, the estimator $\qhat_k$ defined in~\eqref{eqDefEstSeries} satisfies, for any $\sol, \solcirc \in \csol$, the risk bound
\begin{align*}
	\sup_{\ev \in \cev} \Eb [(\qhat_k - \qfunc(\sol))]^2 &\leq C(d)\epsilon^4 \sum_{j=1}^{k} \omega_j^4\alpha_j^{-4} + C(d)\sigma^4 \sum_{j=1}^k  \omega_j^4\alpha_j^{-4}(\solcirc_j)^4 \\
	&\hspace{-1em}+C(d)\epsilon^2 \sum_{j=1}^k \omega_j^4 \alpha_j^{-2}(\sol_j - \solcirc_j)^2 + C(d) \sigma^2 \sum_{j=1}^k \omega_j^4 \alpha_j^{-2}(\solcirc_j)^2 (\sol_j - \solcirc_j)^2\\
	&\hspace{-1em}+C(d,L)\sigma^4 \sum_{j=1}^{k}  \omega_j^4\alpha_j^{-4}\gamma_j^{-2}(\sol_j - \solcirc_j)^2 +C(d,L)\sigma^2 \sum_{j=1}^{k}  \omega_j^4\alpha_j^{-2}\gamma_j^{-2} (\sol_j - \solcirc_j)^2  \\
	&\hspace{-1em}+\frac{\omega_k^4}{\gamma_k^4} \bigg( \sum_{j>k}\gamma_j^2 (\sol_j - \solcirc_j)^2 \bigg)^2.
	\end{align*}
	Consequently, for $\solcirc \in \csol$,
\begin{align*}
	\sup_{\sol \in \csol} \sup_{\ev \in \cev} \Eb [(\qhat_k - \qfunc(\sol))]^2  &\leq C(d) \epsilon^4 \sum_{j=1}^{k} \omega_j^4\alpha_j^{-4} + C(d,L) \epsilon^2 \max_{j \in \llbracket 1,k\rrbracket} \frac{\omega_j^4}{\alpha_j^2 \gamma_j^2} +  C(L) \, \frac{\omega_k^4}{\gamma_k^4}\\
	&\hspace{1em}+ C(d,L)\sigma^2 \max_{j \in \llbracket 1,k\rrbracket} \frac{\omega_j^4}{\alpha_j^2 \gamma_j^4} + C(d,L)\sigma^4 \max_{j \in \llbracket 1,k\rrbracket}  \frac{\omega_j^4}{\alpha_j^4 \gamma_j^4}.
	\end{align*}
\end{theorem}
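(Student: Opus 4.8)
The plan is to estimate the mean squared error of $\qhat_k$ by splitting it into a squared bias term and a variance term, i.e.
\begin{equation*}
	\Eb[(\qhat_k - \qfunc(\sol))]^2 = \big(\Eb \qhat_k - \qfunc(\sol)\big)^2 + \operatorname{Var}(\qhat_k),
\end{equation*}
and then to control each of the two pieces separately. For the bias, I would first record that, by the sample cloning construction, $U_j$ and $\1_{\Omega_j}/V_j$ are independent, so $\Eb[\omega_j^2 U_j \1_{\Omega_j}/V_j] = \omega_j^2\, \lambda_j^2(\sol_j-\solcirc_j)^2\, \Eb[\1_{\Omega_j}/V_j]$. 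The key computation is then to show that $\Eb[\1_{\Omega_j}/V_j]$ differs from $\lambda_j^{-2}$ by a quantity that is exponentially small in $\lambda_j^2/\sigma^2$ (this is where the cut-off $\Omega_j = \{\Ypp_j^2 \ge 3\sigma^2\}$ does its job of keeping $V_j$ away from zero): on $\Omega_j$ one has $V_j = \Ypp_j^2 - 2\sigma^2 \ge \sigma^2$, so $\1_{\Omega_j}/V_j$ is bounded, and a Gaussian tail estimate on the event $\Omega_j^c$, which under $\ev \in \cev$ has probability bounded via $\alpha_j$, yields the exponential control. Summing, the per-coordinate bias contributions decay geometrically and are dominated by the single term at $j=k$; adding the truncation error $\sum_{j>k}\omega_j^2(\sol_j-\solcirc_j)^2$, and using $\omega\gamma^{-1}$ non-increasing together with the ellipsoid constraint $\sum_{j>k}\gamma_j^2(\sol_j-\solcirc_j)^2 \le 4L^2$ (since both $\sol,\solcirc \in \csol$), the whole squared bias is bounded by $C\,\omega_k^4\gamma_k^{-4}\big(\sum_{j>k}\gamma_j^2(\sol_j-\solcirc_j)^2\big)^2$, which is the last term in the first display.

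For the variance, since $\xi,\eta,\widetilde\eta$ are jointly Gaussian and the summands of $\qhat_k$ are independent across $j$ (each is a function of $(X_j,\Yp_j,\Ypp_j)$ only), I would write $\operatorname{Var}(\qhat_k) = \sum_{j=1}^k \operatorname{Var}\!\big(\omega_j^2 U_j \1_{\Omega_j}/V_j\big)$ and bound each term by $\omega_j^4\,\Eb[U_j^2]\,\Eb[\1_{\Omega_j}/V_j^2]$ using independence of $U_j$ and $\1_{\Omega_j}/V_j$. On $\Omega_j$ one has $V_j \ge \sigma^2$ but also, more usefully, $\Eb[\1_{\Omega_j}/V_j^2]$ is of order $\lambda_j^{-4} \asymp_{d} \alpha_j^{-4}$ up to exponentially small corrections, again by the same cut-off argument. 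The second moment $\Eb[U_j^2]$ with $U_j = (X_j - \Yp_j\solcirc_j)^2 - \epsilon^2 - 2(\solcirc_j)^2\sigma^2$ is a polynomial (degree four) in Gaussian variables: expanding $X_j - \Yp_j\solcirc_j = \lambda_j(\sol_j-\solcirc_j) + \epsilon\xi_j - \solcirc_j(\sigma\eta_j + \sigma\widetilde\eta_j)$ and computing Gaussian moments gives
\begin{equation*}
	\Eb[U_j^2] \le C\big(\epsilon^4 + (\solcirc_j)^4\sigma^4 + \epsilon^2\lambda_j^2(\sol_j-\solcirc_j)^2 + (\solcirc_j)^2\sigma^2\lambda_j^2(\sol_j-\solcirc_j)^2\big),
\end{equation*}
plus lower-order cross terms that are absorbed. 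Multiplying by $\omega_j^4 \Eb[\1_{\Omega_j}/V_j^2] \asymp \omega_j^4\alpha_j^{-4}$ and using $\lambda_j^2 \asymp_d \alpha_j^2$ produces exactly the first four sums in the first display of the theorem; the two $\gamma_j^{-2}$ sums come from bounding $(\solcirc_j)^2 \le L^2\gamma_j^{-2}$ in the two terms of $\Eb[U_j^2]$ that carry a factor $(\solcirc_j)^2$ but no $(\sol_j-\solcirc_j)^2$, and reassigning — more precisely, those arise from the contribution of the recentering constants $\epsilon^2$ and $2(\solcirc_j)^2\sigma^2$ in $U_j$ interacting with the randomness in $\Yp_j$. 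Assembling bias and variance gives the first claimed inequality.

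The second display is then purely deterministic bookkeeping under Assumption~\ref{assSeq}: in each sum one bounds $(\sol_j-\solcirc_j)^2$ either by $4L^2\gamma_j^{-2}$ (from $\sol,\solcirc \in \csol$) or leaves $\sum_j \gamma_j^2(\sol_j-\solcirc_j)^2 \le 4L^2$ as a whole, pulls the remaining $j$-dependent factor out as a maximum over $\llbracket 1,k\rrbracket$, and similarly bounds $(\solcirc_j)^4 \le L^4\gamma_j^{-4}$; the truncation term is controlled by $\omega_k^4\gamma_k^{-4}(4L^2)^2$. One checks that the $\sigma^2$-sum with $\alpha_j^{-2}\gamma_j^{-4}$ dominates the $\sigma^2$-sum with $\alpha_j^{-2}\gamma_j^{-2}$ because $\gamma_j \ge \gamma_1 = 1$, and likewise the $\sigma^4$-term with $\alpha_j^{-4}\gamma_j^{-4}$ dominates its $\alpha_j^{-4}\gamma_j^{-2}$ counterpart, so those two mixed terms disappear from the final bound. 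I expect the main obstacle to be the careful handling of the cut-off event $\Omega_j$ — specifically, establishing sharp enough bounds on $\Eb[\1_{\Omega_j}/V_j]$ and $\Eb[\1_{\Omega_j}/V_j^2]$ (both the "main term" of order $\lambda_j^{-2}$, resp. $\lambda_j^{-4}$, and the exponentially small remainder) uniformly over $\ev \in \cev$, since $\lambda_j^2$ can be of the same order as $\sigma^2$ or much larger, and the ratio $1/V_j$ is genuinely heavy-tailed without the truncation; the Gaussian moment computation for $\Eb[U_j^2]$, while lengthy, is routine.
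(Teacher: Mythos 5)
Your high-level plan (exploit the independence of $U_j$ and $\1_{\Omega_j}/V_j$ coming from sample cloning, compute Gaussian moments of $U_j$, control the cut-off event) shares the key ingredients with the paper, but two steps as written would fail. First, the bias analysis: the deviation $\Eb[\1_{\Omega_j}/V_j]-\lambda_j^{-2}$ is \emph{not} exponentially small and cannot be absorbed into the truncation term $\omega_k^4\gamma_k^{-4}(\sum_{j>k}\gamma_j^2(\sol_j-\solcirc_j)^2)^2$. Since $V_j$ has standard deviation of order $\sigma^2+\sigma\vert\lambda_j\vert$, this deviation carries polynomial corrections of order $\sigma^2\lambda_j^{-4}$ (and $\Pb(\Omega_j^\complement)$ is only of order $\min\{1,\sigma^2\alpha_j^{-2}\}$, not exponentially small, when $\lambda_j^2\lesssim\sigma^2$). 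The resulting bias contribution $\sum_{j\le k}\omega_j^2\lambda_j^2(\sol_j-\solcirc_j)^2(\Eb[\1_{\Omega_j}/V_j]-\lambda_j^{-2})$ is exactly what produces the fifth and sixth terms of the theorem, $\sigma^4\sum\omega_j^4\alpha_j^{-4}\gamma_j^{-2}(\sol_j-\solcirc_j)^2$ and $\sigma^2\sum\omega_j^4\alpha_j^{-2}\gamma_j^{-2}(\sol_j-\solcirc_j)^2$ (via Cauchy--Schwarz against the ellipsoid weights $\gamma_j$, which is where the $\gamma_j^{-2}$ comes from); these terms drive the $\sigma$-rate in several regimes of Table~\ref{tableRatesQuadFun}, so they cannot be discarded. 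Your alternative attribution of the $\gamma_j^{-2}$ terms to the recentering constants in $U_j$ does not work either: no mechanism you describe produces the factor $(\sol_j-\solcirc_j)^2$ that these terms carry.

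Second, the variance bound $\var(\omega_j^2U_j\1_{\Omega_j}/V_j)\le\omega_j^4\,\Eb[U_j^2]\,\Eb[\1_{\Omega_j}/V_j^2]$ is too lossy: since $\Eb U_j=\lambda_j^2(\sol_j-\solcirc_j)^2$, one has $\Eb[U_j^2]\ge\lambda_j^4(\sol_j-\solcirc_j)^4$ (so your displayed bound on $\Eb[U_j^2]$ is false as stated --- it is a bound on $\var(U_j)$), and multiplying by $\Eb[\1_{\Omega_j}/V_j^2]\asymp\lambda_j^{-4}$ leaves a residual $\sum_{j\le k}\omega_j^4(\sol_j-\solcirc_j)^4$, which is of constant order on $\csol$ and does not appear in (nor is dominated by) the claimed bound. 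The fix is precisely what the paper does: decompose $\qhat_k-\qfunc(\sol)=\Tc_{k1}+\Tc_{k2}+\Tc_{k3}+\Tc_{k4}$, where $\Tc_{k1}$ replaces $U_j$ by its \emph{centered} version (so that $\var(Z_1Z_2)=\var(Z_1)\Eb[Z_2^2]$ applies with $Z_1=(U_j-\Eb U_j)/\lambda_j^2$ and $\Eb[\lambda_j^4 V_j^{-2}\1_{\Omega_j}]\le 168$), $\Tc_{k2}$ and $\Tc_{k3}$ isolate the error of $\1_{\Omega_j}/V_j$ as an estimator of $\lambda_j^{-2}$ and are handled by Cauchy--Schwarz against $\sum_j\gamma_j^2(\sol_j-\solcirc_j)^2\le 4L^2$ together with the bounds $\Eb[(\lambda_j^2/V_j-1)^2\1_{\Omega_j}]\le C(d)(\sigma^4\alpha_j^{-4}+\sigma^2\alpha_j^{-2})$ and $\Pb(\Omega_j^\complement)\le 12d^2\min\{1,\sigma^2\alpha_j^{-2}\}$, and $\Tc_{k4}$ is the truncation bias. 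Your reading of the second display (deterministic bookkeeping from the first) is essentially correct.
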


\begin{proof}
	We consider the decomposition $\qhat_k - \qfunc(\sol) = \Tc_{k1} + \Tc_{k2} + \Tc_{k3} + \Tc_{k4}$ where
	\begin{align*}
		&\Tc_{k1} = \sum_{j=1}^{k} \omega_j^2 \, \frac{U_j}{V_j} \1_{\Omega_j} - \sum_{j=1}^k \omega_j^2 \, \frac{\lambda_j^2 (\sol_j - \solcirc_j)^2 }{V_j} \1_{\Omega_j},\\
		&\Tc_{k2} = \sum_{j=1}^k \omega_j^2 \, \frac{\lambda_j^2 (\sol_j - \solcirc_j)^2 }{V_j} \1_{\Omega_j} - \sum_{j=1}^k \omega_j^2 (\sol_j - \solcirc_j)^2 \1_{\Omega_j},\\
		&\Tc_{k3} = - \sum_{j=1}^k \omega_j^2 (\sol_j - \solcirc_j)^2 \1_{\Omega_j^\complement},\\
		&\Tc_{k4} = \sum_{j>k} \omega_j^2 (\sol_j - \solcirc_j)^2.
	\end{align*}
	Thus $\Eb [(\qhat_k - \qfunc(\sol))^2] \leq 4 \sum_{i=1}^{4} \Eb \Tc_{ki}^2$, and the rest of the proof consists in finding appropriate upper bounds  for the terms $\Eb \Tc_{ki}^2$, $i \in \llbracket 1,4\rrbracket$, which are derived in Appendix~\ref{appUpper}.
\end{proof}

The upper bound proved in Theorem~\ref{thmUpper} consists of terms that are non-decreasing in $k$, and the term $\omega_k^4\gamma_k^{-4}$ which is non-increasing in $k$.
Putting
\begin{align}
	&\kepsilon = \argmin_{k \in \Nast} \max \left\lbrace \epsilon^4 \sum_{j=1}^{k} \omega_j^4\alpha_j^{-4}, \epsilon^2 \max_{j \in \llbracket 1,k\rrbracket} \frac{\omega_j^4}{\alpha_j^2 \gamma_j^2}, \frac{\omega_k^4}{\gamma_k^4} \right\rbrace \qquad \text{and}\label{eqDef:kepsilon}\\
	&\ksigma = \argmin_{k \in \Nast} \max \left\lbrace \sigma^2 \max_{j \in \llbracket 1,k\rrbracket} \frac{\omega_j^4}{\alpha_j^2 \gamma_j^4}, \sigma^4 \max_{j \in \llbracket 1,k\rrbracket} \frac{\omega_j^4}{\alpha_j^4 \gamma_j^4}, \frac{\omega_k^4}{\gamma_k^4} \right\rbrace,\label{eqDef:ksigma}
\end{align}
the quantity $\kepsilon$ yields the best balance between the squared bias and the variance terms in $\epsilon$, and analogously $\ksigma$ the best balance between squared bias and variance terms in terms of $\sigma$.
Thus, the following corollary holds.

\begin{corollary}\label{corDiagOpt}
	Under the assumptions of Theorem~\ref{thmUpper}, $\kstar \defeq \kepsilon \wedge \ksigma$ with $\kepsilon$, $\ksigma$ as in \eqref{eqDef:kepsilon} and \eqref{eqDef:ksigma} provides the optimal choice of $k$ in Theorem~\ref{thmUpper}, and it holds
	\begin{align*}
		\sup_{\sol \in \csol} \sup_{\ev \in \cev} \Eb [(\qhat_{\kstar} - \qfunc(\sol))]^2  &\lesssim \epsilon^4 \sum_{j=1}^{\kstar} \omega_j^4\alpha_j^{-4} + \epsilon^2 \max_{j \in \llbracket 1,\kstar \rrbracket} \frac{\omega_j^4}{\alpha_j^2 \gamma_j^2} +  \frac{\omega_{\kstar}^4}{\gamma_{\kstar}^4}\\
		&+ \sigma^2 \max_{j \in \llbracket 1,\kstar \rrbracket} \frac{\omega_j^4}{\alpha_j^2 \gamma_j^4} + \sigma^4 \max_{j \in \llbracket 1,\kstar \rrbracket} \frac{\omega_j^4}{\alpha_j^4 \gamma_j^4}
	\end{align*}
	where the numerical constant in $\lesssim$ depends on $d$ and $L$.
\end{corollary}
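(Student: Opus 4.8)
The plan is to deduce Corollary~\ref{corDiagOpt} from Theorem~\ref{thmUpper} by a purely monotonicity-based bookkeeping argument. Observe first that the displayed estimate is nothing but Theorem~\ref{thmUpper} specialised to $k = \kstar$ (with $C(d), C(d,L), C(L)$ absorbed into $\lesssim$); so the substantive content is that $\kstar$ is, up to a multiplicative constant, the \emph{optimal} truncation level, i.e.\ the minimiser of the upper bound over all $k \in \Nast$. Denote by $B(k)$ the right-hand side of the second estimate in Theorem~\ref{thmUpper} (the one uniform over $\sol \in \csol$). Since a finite sum of non-negative numbers is comparable to its maximum, $B(k) \asymp \max\{ M_\epsilon(k), M_\sigma(k)\}$ with constants depending only on $d$ and $L$, where
\begin{align*}
  M_\epsilon(k) &= \max\Bigl\{ \epsilon^4 \sum_{j=1}^{k} \omega_j^4\alpha_j^{-4},\ \epsilon^2 \max_{j \in \llbracket 1,k\rrbracket} \frac{\omega_j^4}{\alpha_j^2 \gamma_j^2},\ \frac{\omega_k^4}{\gamma_k^4} \Bigr\},\\
  M_\sigma(k) &= \max\Bigl\{ \sigma^2 \max_{j \in \llbracket 1,k\rrbracket} \frac{\omega_j^4}{\alpha_j^2 \gamma_j^4},\ \sigma^4 \max_{j \in \llbracket 1,k\rrbracket} \frac{\omega_j^4}{\alpha_j^4 \gamma_j^4},\ \frac{\omega_k^4}{\gamma_k^4} \Bigr\}
\end{align*}
are exactly the quantities minimised by $\kepsilon$ and $\ksigma$ in~\eqref{eqDef:kepsilon} and~\eqref{eqDef:ksigma}. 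The corollary therefore reduces to showing $B(\kstar) \lesssim B(k)$ for every $k \in \Nast$.

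The crucial structural fact I would record is the monotonicity pattern: each of the two $\epsilon$-terms and each of the two $\sigma$-terms entering $M_\epsilon$ and $M_\sigma$ is non-decreasing in $k$ (being a partial sum, respectively a maximum over a growing index set), whereas the \emph{common} bias term $\omega_k^4/\gamma_k^4$ is non-increasing in $k$ by Assumption~\ref{assSeq}. In particular $\kepsilon$ and $\ksigma$ are genuine minimisers of $M_\epsilon$ and $M_\sigma$, so $M_\epsilon(\kepsilon) \leq M_\epsilon(k)$ and $M_\sigma(\ksigma) \leq M_\sigma(k)$ for all $k$.

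Next I would argue by symmetry, assuming without loss of generality $\kepsilon \leq \ksigma$, so that $\kstar = \kepsilon$ (the case $\ksigma < \kepsilon$ being handled by exchanging the roles of the two subscripts). Fix $k \in \Nast$. The $\epsilon$-part is immediate: $M_\epsilon(\kepsilon) \leq M_\epsilon(k) \lesssim B(k)$. For the $\sigma$-part I would bound each of the three constituents of $M_\sigma(\kepsilon)$ in turn: by the monotonicity above together with $\kepsilon \leq \ksigma$, the two $\sigma$-terms at $\kepsilon$ are dominated by the corresponding terms at $\ksigma$, hence by $M_\sigma(\ksigma) \leq M_\sigma(k) \lesssim B(k)$; the remaining constituent, $\omega_{\kepsilon}^4/\gamma_{\kepsilon}^4$, is itself one of the three constituents of $M_\epsilon(\kepsilon)$ and so is $\leq M_\epsilon(\kepsilon) \lesssim B(k)$. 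Hence $M_\sigma(\kepsilon) \lesssim B(k)$, and combining, $B(\kstar) \asymp \max\{ M_\epsilon(\kepsilon), M_\sigma(\kepsilon)\} \lesssim B(k)$. Since $k$ was arbitrary, $\kstar$ is the order-optimal choice, and evaluating Theorem~\ref{thmUpper} at $k = \kstar$ gives the displayed bound.

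I do not expect any genuinely hard step — the argument is elementary manipulation of monotone sequences. The only subtlety, and the one point I would make sure to get right, is the coupling between the two subproblems through the \emph{shared} bias term $\omega_k^4/\gamma_k^4$: this term appears in both $M_\epsilon$ and $M_\sigma$, so one cannot simply optimise the $\epsilon$- and $\sigma$-contributions in isolation. Taking $\kstar = \kepsilon \wedge \ksigma$ resolves this precisely because the smaller of the two candidate truncation levels keeps the (decreasing) bias term no larger than the level already incurred on whichever subproblem it minimises — which is exactly what the termwise estimate above makes explicit.
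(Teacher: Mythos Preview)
Your proposal is correct and follows exactly the approach the paper indicates: the paper offers no proof beyond the sentence preceding the corollary, noting that $\kepsilon$ and $\ksigma$ separately balance the bias against the $\epsilon$- and $\sigma$-variance terms, and that the corollary ``thus'' holds. Your monotonicity bookkeeping (the variance-type terms are non-decreasing in $k$, the shared bias $\omega_k^4/\gamma_k^4$ is non-increasing, and the smaller of $\kepsilon,\ksigma$ controls both subproblems via the shared bias) is precisely the argument the paper leaves implicit, written out in full.
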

It is remarkable that for the estimation of quadratic functionals the optimal truncation parameter $\kstar$ depends on both $\epsilon$ and $\sigma$ whereas the optimal truncation parameter for the estimation of $\sol$ itself under $\ell^2$-loss can be chosen in dependence on $\epsilon$ only (see~\cite{johannes2013partially}, Theorem~2.5).
It is not difficult to obtain the rates of convergence for the specific choices of $\gamma$ and $\alpha$ introduced in Subsection~\ref{subsMinimax} (see Appendix~\ref{appCalcRates} for some detailed calculations).
These rates are summarized in Table~\ref{tableRatesQuadFun} for the special case that $\omega_j \equiv 1$.
Note that in all illustrations the rate in $\sigma$ is at least as fast as the one in $\epsilon$, a fact that can in general be seen from the abstract rates in Corollary~\ref{corDiagOpt}.
In some examples, the rate in $\sigma$ is even strictly faster than the one in $\epsilon$.
For instance in the case where all the smoothness assumptions are polynomial, one has, depending on the actual values of $p$ and $a$, to distinguish between three 'zones' of rates:
\begin{enumerate}
 \item if $2p \leq a$, then both rates are non-parametric and the overall rate is $\epsilon^{16p/(4a+4p+1)} \vee \sigma^{4p/a}$,
 \item if $2p \geq a$ but $p \leq a + 1/4$, then the rate in $\sigma$ is the parametric rate but with respect to $\epsilon$ we are in the non-parametric regime, and the overall rate is $\epsilon^{16p/(4a+4p+1)} \vee \sigma^2$,
 \item if $p \geq a + 1/4$, then we are in the parametric regime with respect to both noise levels and the rate is $\epsilon^2 \vee \sigma^2=(\epsilon \vee \sigma)^2$.
\end{enumerate}
This behaviour generalizes the classical elbow effect which is well known in terms of the noise level $\epsilon$.

{\centering
\begin{table}
\caption{Optimal rates of convergence for the estimation of quadratic functionals in case that $\omega_j \equiv 1$. Upper bounds are proved in Section~\ref{secUpper}, lower bounds in Section~\ref{secLower}.}\label{tableRatesQuadFun}
\renewcommand{\arraystretch}{1.8}
\footnotesize
\begin{tabularx}{14cm}{c!{\color{gray!25}\vrule}cc}
     & Sobolev class \color{Ivory4}(\boldmath$\gamma_j = j^p$) & Analytic class \color{Ivory4}(\boldmath$\gamma_j  \asymp e^{pj}$) \\
    \arrayrulecolor{gray!25}\hline
    Mildly ill-posed \boldmath(\color{Ivory4}$\alpha_j = j^{-a}$) & $\epsilon^{16p/(4a+4p+1)} \vee \epsilon^2 \vee \sigma^{4p/a} \vee \sigma^2$ & $\epsilon^2 \vee \sigma^2$\\    
    Severely ill-posed \color{Ivory4}(\boldmath$\alpha_j \asymp e^{-aj}$) & $\vert \log \epsilon \vert^{-4p} \vee \vert \log \sigma \vert^{-4p}$ & $\epsilon^{4p/(p+a)} \vee \epsilon^2 \vee \sigma^{4p/a} \vee \sigma^2$
\end{tabularx}
\normalsize
\end{table}}

\begin{remark}\label{remTSEs}
  Let us mention that, using estimates similar to the ones used in the proof of Theorem~\ref{thmUpper}, it would be possible to show that the estimator
  \begin{align*}
       \qhat_k &= \sum_{j=1}^k \frac{X_j^2 - \epsilon^2}{Y_j^2 - \sigma^2} \1_{ \{ Y_j^2 \geq 2 \sigma^2 \} } - 2\sum_{j=1}^k \solcirc_j\frac{X_j}{Y_j} \1_{ \{ Y_j^2 \geq 2 \sigma^2 \} } + \sum_{j=1}^k (\solcirc_j)^2
  \end{align*}
  attains the optimal rate of convergence provided that the truncation parameter is suitably chosen.
  Note that this estimator does not depend on the availability of two independent samples of the noisy eigenvalues.
  A theoretical guarantee similar to the one of Theorem~\ref{thmUpper} could, for this alternative estimator, be obtained under an even weaker assumption than Gaussianity (for instance, sub-Gaussianity) because no sample splitting is necessary for the definition of the estimator.
  However, we stick to the estimator defined in~\eqref{eqDefEstSeries} since it provides a representation of the risk bound that is more convenient for our application to testing.
  More precisely, several terms in the risk bound contain the expression $(\sol_j - \solcirc_j)^2$ which vanishes when $\sol = \solcirc$, and precisely this property is exploited when controlling the type I error of our test procedures.
\end{remark} \section{Minimax lower bounds}\label{secLower}

In this section, we derive lower bounds on the minimax risk in the sense of \eqref{eqMinimaxRiskLower}.
In order to cleanse the notation, we restrict ourselves without loss of generality to the case $\solcirc=0$ (the proofs in the general case follow easily by adapting this proof to the case $\solcirc \neq 0$).
Note that the assumptions imposed in addition to Assumption~\ref{assSeq} in this section are satisfied by all our illustrating examples.
Thus, the results of this section imply the optimality of the rates in Table~\ref{tableRatesQuadFun}.

\subsection{Lower bounds in terms of $\epsilon$}

The following theorem provides a lower bound for the case that the rate with respect to $\epsilon$ is determined by the term $\epsilon^4 \sum_{j=1}^\kappa \omega_j^4 \alpha_j^{-4}$ (non-parametric regime) where
\begin{equation}\label{eqKappaEpsNP}
  \kappa = \argmin_{k \in \Nast} \max \bigg\{ \epsilon^4 \sum_{j=1}^{k} \omega_j^4\alpha_j^{-4}, \frac{\omega_k^4}{\gamma_k^4} \bigg\}.
\end{equation}
\begin{theorem}\label{thmLowerEps1}
Let Assumption~\ref{assSeq} hold true, and let $\kappa$ be defined as in~\eqref{eqKappaEpsNP}.
If
\begin{equation*}
 \epsilon^4 \sum_{j=1}^{\kappa} \omega_j^4\alpha_j^{-4} \asymp_\nu \omega_\kappa^4\gamma_\kappa^{-4}
\end{equation*}
for some $\nu \geq 1$, then
\begin{equation*}
        \inf_\qtilde \sup_{\sol \in \csol} \sup_{\ev \in \cev} \Eb [ ( \qtilde - \qfunc(\sol) )^2 ] \gtrsim \epsilon^4 \sum_{j=1}^{\kappa} \omega_j^4\alpha_j^{-4}
\end{equation*}
where the infimum is taken over all estimators $\qtilde$ based on the observations~\eqref{eqObsX} and~\eqref{eqObsY}.
\end{theorem}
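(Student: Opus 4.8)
The plan is to apply Le Cam's two-point method in the form adapted to functional estimation, but with the alternative replaced by a mixture over sign patterns chosen so that $\qfunc$ takes a constant value along the mixture; this reduces everything to a clean bound on the $\chi^2$-distance between two Gaussian measures. First I would dispose of the unknown operator: since $\sup_{\ev\in\cev}(\cdots)$ dominates the value at any fixed admissible sequence and $\alpha\in\cev$, it suffices to bound from below $\inf_{\qtilde}\sup_{\sol\in\csol}\Eb_{(\sol,\alpha)}[(\qtilde-\qfunc(\sol))^2]$, the infimum being over estimators based on $X_j=\alpha_j\sol_j+\epsilon\xi_j$ and $Y_j=\alpha_j+\sigma\eta_j$. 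The law of $Y$ does not depend on $\sol$, so it will be common to all measures below and will factor out of every $\chi^2$- and total-variation distance; effectively one argues as if only $X$ were observed.

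Next I would set up the perturbation. Write $S\defeq\sum_{j=1}^{\kappa}\omega_j^4\alpha_j^{-4}$ and fix a small $\delta_0\in(0,1)$ to be pinned down at the end. For $j\in\llbracket 1,\kappa\rrbracket$ put $c_j^2\defeq\delta_0\,\epsilon^2\,S^{-1/2}\,\omega_j^2\alpha_j^{-2}$ and $c_j\defeq 0$ for $j>\kappa$; to each $\varsigma\in\{-1,+1\}^{\kappa}$ associate $\sol^{(\varsigma)}$ with $\sol^{(\varsigma)}_j\defeq\alpha_j^{-1}c_j\varsigma_j$ for $j\le\kappa$ and $0$ otherwise. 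Let $P_0$ be the joint law of $(X,Y)$ under $\sol=0$, $P_\varsigma$ the one under $\sol^{(\varsigma)}$, and $\bar P_1\defeq 2^{-\kappa}\sum_{\varsigma}P_\varsigma$. Two facts are immediate: $\qfunc(0)=0$, and $\qfunc(\sol^{(\varsigma)})=\sum_{j\le\kappa}\omega_j^2\alpha_j^{-2}c_j^2=\delta_0\,\epsilon^2\sqrt{S}=:v$ does not depend on $\varsigma$, so that $v^2=\delta_0^2\,\epsilon^4\sum_{j=1}^{\kappa}\omega_j^4\alpha_j^{-4}$, which is exactly the target order. The Cauchy--Schwarz rationale behind the choice $c_j^2\propto\omega_j^2\alpha_j^{-2}$ is that it maximises $\sum_{j\le\kappa}\omega_j^2\alpha_j^{-2}c_j^2$ under the constraint $\sum_{j\le\kappa}c_j^4\lesssim\epsilon^4$ that is needed to keep the $\chi^2$-distance bounded.

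It then remains to check that $\sol^{(\varsigma)}\in\csol$ and that $\bar P_1$ is close to $P_0$. For membership, $\sum_{j\le\kappa}\gamma_j^2(\sol^{(\varsigma)}_j)^2=\delta_0\epsilon^2 S^{-1/2}\sum_{j\le\kappa}(\gamma_j/\omega_j)^2\,\omega_j^4\alpha_j^{-4}$; since $\omega\gamma^{-1}$ is non-increasing by Assumption~\ref{assSeq}, $(\gamma_j/\omega_j)^2\le(\gamma_\kappa/\omega_\kappa)^2$ for $j\le\kappa$, so this sum is at most $\delta_0\epsilon^2\sqrt{S}\,(\gamma_\kappa/\omega_\kappa)^2$, and the balance hypothesis $\epsilon^4 S\asymp_\nu\omega_\kappa^4\gamma_\kappa^{-4}$ (hence $\epsilon^2\sqrt{S}\le\sqrt{\nu}\,\omega_\kappa^2\gamma_\kappa^{-2}$) turns it into $\delta_0\sqrt{\nu}\le L^2$ as soon as $\delta_0\le L^2/\sqrt{\nu}$. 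For the distance, the standard identity for Gaussian mixtures gives
\begin{equation*}
1+\chi^2(\bar P_1\Vert P_0)=\prod_{j\le\kappa}\cosh\!\left(c_j^2/\epsilon^2\right)\le\exp\!\bigg(\frac{1}{2\epsilon^4}\sum_{j\le\kappa}c_j^4\bigg),
\end{equation*}
and $\sum_{j\le\kappa}c_j^4=\delta_0^2\,\epsilon^4\,S^{-1}\sum_{j\le\kappa}\omega_j^4\alpha_j^{-4}=\delta_0^2\,\epsilon^4$, so that $\chi^2(\bar P_1\Vert P_0)\le e^{\delta_0^2/2}-1$. Combining with the usual two-point lower bound, for every estimator $\qtilde$,
\begin{equation*}
\sup_{\sol\in\csol}\Eb_{(\sol,\alpha)}[(\qtilde-\qfunc(\sol))^2]\ge\max\bigl\{\Eb_{P_0}[\qtilde^2],\,\Eb_{\bar P_1}[(\qtilde-v)^2]\bigr\}\ge\frac{v^2}{4}\Bigl(1-\Vert P_0-\bar P_1\Vert_{\mathrm{TV}}\Bigr),
\end{equation*}
and $\Vert P_0-\bar P_1\Vert_{\mathrm{TV}}\le\tfrac12\sqrt{e^{\delta_0^2/2}-1}\le\tfrac12$ once $\delta_0$ is a small enough numerical constant. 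Choosing $\delta_0\defeq\min\{L^2/\sqrt{\nu},\,\delta_1\}$ with $\delta_1$ that numerical constant then yields $\inf_{\qtilde}\sup_{\sol,\ev}\Eb[(\qtilde-\qfunc(\sol))^2]\ge\tfrac{v^2}{8}\gtrsim\epsilon^4\sum_{j=1}^{\kappa}\omega_j^4\alpha_j^{-4}$ with a constant depending only on $L$ and $\nu$.

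The hard part will be the admissibility check $\sol^{(\varsigma)}\in\csol$: the extremal direction $c_j^2\propto\omega_j^2\alpha_j^{-2}$ forced by the $\chi^2$-budget, the monotonicity of $\omega\gamma^{-1}$, and the balance condition $\epsilon^4\sum_{j\le\kappa}\omega_j^4\alpha_j^{-4}\asymp_\nu\omega_\kappa^4\gamma_\kappa^{-4}$ must conspire so that the worst-case perturbation still fits inside the ellipsoid; without the balance hypothesis the construction overshoots the radius constraint. The remaining ingredients — the mixture reduction, the Gaussian $\chi^2$-computation, and Le Cam's inequality — are entirely routine.
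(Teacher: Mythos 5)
Your proposal is correct and follows essentially the same route as the paper: the same hypercube perturbation $\sol_j \propto \omega_j\alpha_j^{-2}$ on the first $\kappa$ coordinates with a fixed operator sequence, the same use of the monotonicity of $\omega\gamma^{-1}$ together with the balance condition $\epsilon^4\sum_{j\le\kappa}\omega_j^4\alpha_j^{-4}\asymp_\nu\omega_\kappa^4\gamma_\kappa^{-4}$ to verify membership in $\csol$, and the same Gaussian-mixture $\chi^2$ computation via $\prod_j\cosh(c_j^2/\epsilon^2)\le\exp(\sum_j c_j^4/(2\epsilon^4))$. The only cosmetic difference is that you conclude with Le Cam's two-point inequality in total variation, whereas the paper invokes its Lemma~\ref{lemLowerHypercube} combined with Markov's inequality.
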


The next theorem considers the case that the rate in $\epsilon$ is determined by balancing the terms $\epsilon^2 \max_{j \in \llbracket 1,k \rrbracket} \omega_j^4/(\alpha_j\gamma_j)^2$ and the squared bias $\omega_k^4 \gamma_k^{-4}$ (which might result in the parametric rate $\epsilon^2$ as the lower bound).

\begin{theorem}\label{thm:lower:epsilon:2}
Let Assumption~\ref{assSeq} hold true.
\begin{enumerate}[(a)]
    \item\label{lower:epsilon:2:a} 
    Set
    \begin{equation*}
      \kappa = \argmin_{k \in \Nast} \max \bigg\{ \epsilon^2 \frac{\omega_k^4}{\alpha_k^{2}\gamma_k^{2}}, \frac{\omega_k^4}{\gamma_k^4} \bigg\},
    \end{equation*}
    and assume that $\epsilon^2 \alpha_\kappa^{-2} \gamma_\kappa^{-2} \asymp_\nu \gamma_\kappa^{-4}$.
    Then
    \begin{equation*}
            \inf_\qtilde \sup_{\theta \in \csol} \sup_{\lambda \in \cev} \Eb [ ( \qtilde - \qfunc(\theta) )^2 ] \gtrsim \epsilon^2 \frac{\omega_\kappa^4}{\alpha_\kappa^2\gamma_\kappa^2}
    \end{equation*}
    where the infimum is taken over all estimators $\qtilde$.
    \item\label{thm:lower:epsilon:2:b}
    It holds
    \begin{equation*}
            \inf_\qtilde \sup_{\theta \in \csol} \sup_{\lambda \in \cev} \Eb [ ( \qtilde - \qfunc(\theta) )^2 ] \geq \min \left\lbrace \frac{L^4}{16}, \frac{1}{4d^4} \right\rbrace  \cdot \epsilon^2
    \end{equation*}
    where the infimum is taken over all estimators $\qtilde$.
\end{enumerate}
\end{theorem}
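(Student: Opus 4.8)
The plan is to prove both parts by the classical two--point (Le Cam) method. The elementary fact underlying everything is: if $P_0,P_1$ are probability measures on the observation space and $v_0,v_1\in\R$, then every estimator $\qtilde$ (a measurable function of $(X,Y)$) satisfies
\[
 \max_{i\in\{0,1\}}\Eb_{P_i}\bigl[(\qtilde-v_i)^2\bigr]\;\ge\;\tfrac14\,(v_0-v_1)^2\bigl(1-\mathrm{TV}(P_0,P_1)\bigr),
\]
which follows from $\max\ge\tfrac12(\,\cdot+\cdot\,)$, from $(x-v_0)^2+(x-v_1)^2\ge\tfrac12(v_0-v_1)^2$, and from $\int\min(dP_0,dP_1)=1-\mathrm{TV}(P_0,P_1)$. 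I would instantiate this with two signals $\sol^{(0)},\sol^{(1)}\in\csol$ and a single $\ev\in\cev$, take $v_i=\qfunc(\sol^{(i)})$, and note that $\sup_{\sol\in\csol}\sup_{\ev\in\cev}\Eb[(\qtilde-\qfunc(\sol))^2]$ dominates the left--hand side; the infimum over $\qtilde$ then gives the lower bound. In each construction I perturb one coordinate only, so that $P_0$ and $P_1$ differ exclusively in the law of a single $X$--coordinate, which under both hypotheses is Gaussian with variance $\epsilon^2$; hence $\mathrm{TV}(P_0,P_1)=2\Phi\bigl(\Delta/(2\epsilon)\bigr)-1$, $\Delta$ being the distance between the two means and $\Phi$ the standard normal c.d.f. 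Choosing the eigenvalue in that coordinate as small as $\cev$ permits, i.e. $|\ev_j|=d^{-1}\alpha_j$, makes $\Delta$, hence the total variation, as small as possible for a prescribed perturbation of the signal, which is what makes the argument efficient.

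For part~\ref{thm:lower:epsilon:2:b} (recall $\solcirc=0$ after the reduction, and $\alpha_1=\gamma_1=\omega_1=1$ by Assumption~\ref{assSeq}) I perturb the first coordinate. I would fix $\ev_1=d^{-1}$, $\ev_j=\alpha_j$ for $j\ge2$, and take $\sol^{(0)}=L e_1$, $\sol^{(1)}=\max\{0,L-d\epsilon\}\,e_1$. Both lie in $\csol$, the distance between the means of $X_1$ equals $d^{-1}\min\{L,d\epsilon\}\le\epsilon$, so $\mathrm{TV}(P_0,P_1)\le2\Phi(1/2)-1<2/5$, and $v_0-v_1=L^2-\max\{0,L-d\epsilon\}^2\ge L\min\{L,d\epsilon\}$ (an elementary check distinguishing $d\epsilon\ge L$ from $d\epsilon<L$). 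The displayed inequality then yields $\inf_{\qtilde}\sup_{\sol\in\csol}\sup_{\ev\in\cev}\Eb[(\qtilde-\qfunc(\sol))^2]\gtrsim L^2\min\{L,d\epsilon\}^2$, which is a numerical constant times $L^4$ when $d\epsilon\ge L$ and times $L^2d^2\epsilon^2$ when $d\epsilon<L$. It remains to check that each of these dominates $\min\{L^4/16,(4d^4)^{-1}\}\epsilon^2$: when $d\epsilon\ge L$ this is immediate since $\epsilon<1$; when $d\epsilon<L$ it reduces, using $d\ge1$ (otherwise $\cev=\emptyset$), to the elementary inequality $L^2d^2/8\ge\min\{L^4/16,(4d^4)^{-1}\}$, which one settles by distinguishing $L^4d^4\ge4$ from $L^4d^4<4$. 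Since my crude constants comfortably beat $1/8$ and $1/16$, the specific constants $1/16$ and $1/4$ in the statement follow a fortiori.

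Part~\ref{lower:epsilon:2:a} is the same argument with the perturbation moved to coordinate $\kappa$. I would take $\sol^{(0)}_\kappa=\tfrac12 L/\gamma_\kappa$ with all other coordinates zero (so there is slack in $\csol$), $\sol^{(1)}_\kappa=\sol^{(0)}_\kappa+h/\gamma_\kappa$ with $h\in(0,\tfrac12 L]$ to be chosen, $\ev_\kappa=d^{-1}\alpha_\kappa$ and $\ev_j=\alpha_j$ otherwise. Then the distance between the means of $X_\kappa$ is $\alpha_\kappa h/(d\gamma_\kappa)$, while the functional gap is $v_1-v_0=\omega_\kappa^2\gamma_\kappa^{-2}(Lh+h^2)\ge\omega_\kappa^2\gamma_\kappa^{-2}Lh$. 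Choosing $h=\min\{\tfrac12 L,\;d\gamma_\kappa\epsilon/\alpha_\kappa\}$ forces the mean distance to be $\le\epsilon$, hence $\mathrm{TV}(P_0,P_1)<2/5$, and the displayed inequality produces a lower bound of order $\omega_\kappa^4\gamma_\kappa^{-4}L^2h^2$. If the first term in the minimum defining $h$ is not active this is a constant (depending on $d,L$) times $\epsilon^2\omega_\kappa^4/(\alpha_\kappa^2\gamma_\kappa^2)$, the asserted rate; if it is active the lower bound is a constant times $L^4\omega_\kappa^4/\gamma_\kappa^4$, and this is exactly where the balance hypothesis $\epsilon^2\alpha_\kappa^{-2}\gamma_\kappa^{-2}\asymp_\nu\gamma_\kappa^{-4}$ — equivalently $\epsilon^2\gamma_\kappa^2/\alpha_\kappa^2\asymp_\nu1$ — is used: it gives $L^4\omega_\kappa^4/\gamma_\kappa^4\gtrsim\epsilon^2\omega_\kappa^4/(\alpha_\kappa^2\gamma_\kappa^2)$ with an implicit constant depending on $d,L,\nu$ only, which is all part~\ref{lower:epsilon:2:a} claims.

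The only genuine difficulty, common to both parts, is the competition between wanting $|v_0-v_1|$ large and wanting $\mathrm{TV}(P_0,P_1)$ bounded away from $1$, under the side constraint $\sol^{(1)}\in\csol$: the second requirement caps the perturbation of the relevant $X$--coordinate at the order $\epsilon$, so the perturbation of $\sol$ itself can be of order $d\epsilon$ in the extremal direction $|\ev_j|=d^{-1}\alpha_j$ but no larger; and whenever even a perturbation of this size would push $\sol^{(1)}$ out of $\csol$, one is thrown into a separate, ``signal--limited'' regime, which is the source of the case distinctions (and of the $L^4$ term in part~\ref{thm:lower:epsilon:2:b}). Everything else — the one--dimensional Gaussian total--variation identity and the Le Cam inequality above — is routine.
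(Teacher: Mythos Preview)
Your proposal is correct and follows essentially the same route as the paper: a two--point (Le Cam) reduction with a perturbation in a single coordinate, coordinate~$\kappa$ for part~(a) and coordinate~$1$ for part~(b). The paper bounds the closeness of the hypotheses via the Kullback--Leibler divergence (invoking the Hellinger--affinity reduction in its Appendix~\ref{ssecReduction2} to get the $\tfrac{1}{16}(\qfunc_1-\qfunc_{-1})^2$ bound), whereas you work directly with total variation through the one--dimensional Gaussian formula; and for part~(b) the paper's two hypotheses are $\sol_1^{\pm}=(1\pm\epsilon)\zeta$ with $\zeta=\min\{L/2,1/(\sqrt 2 d)\}$ and an arbitrary $\ev^\circ\in\cev$, while you take $\sol^{(0)}=Le_1$, $\sol^{(1)}=\max\{0,L-d\epsilon\}e_1$ and the extremal $\ev_1=d^{-1}$. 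These are cosmetic differences: both approaches yield constants that meet (and exceed) the stated $\min\{L^4/16,1/(4d^4)\}$, and the only subtle point---handling the ``signal--limited'' regime in which the full perturbation would leave $\csol$---is treated by the same case distinction in both arguments.
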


For the illustrative examples of rates in Table~\ref{tableRatesQuadFun}, two different scenarios can occur.
In the first case, the sequence $\omega^ 4\alpha^{-2}\gamma^{-2}$ is eventually monotone and tends to infinity.
Then, Statement~\ref{lower:epsilon:2:a} of Theorem~\ref{thm:lower:epsilon:2} applies with the additional assumption of this statement being fulfilled for all our examples.
In the second case, the sequence $\omega^ 4\alpha^{-2}\gamma^{-2}$ is bounded from above, and we apply Statement~\ref{thm:lower:epsilon:2:b} in order to get the parametric rate $\epsilon^2$ as a lower bound.
The proof of the parametric rate $\epsilon^2$ in \ref{thm:lower:epsilon:2:b} given in Appendix~\ref{appLowerEps2} might be of independent interest, since it provides an alternative to the classical approach given in~\cite{fan1991estimation} (see also~\cite{fan1992minimax}) who reduces the proof of the lower bound $\epsilon^2$ to the estimation of a quadratic functional in the normal bounded mean model.

\subsection{Lower bounds in terms of $\sigma$}

We now tackle the problem of finding lower bounds with respect to the noise level $\sigma$.
\begin{theorem}\label{thmLowerSigma}
	Let Assumption~\ref{assSeq} hold true.
	\begin{enumerate}[(a)]
		\item\label{thmLowerSigma:a}
		Set
		\begin{equation*}
		  \kappa = \argmin_{k \in \Nast} \frac{\omega_k^4}{\gamma_k^4} \max \{ \sigma^2 \alpha_k^{-2},1 \},
		\end{equation*}
		and assume $\sigma^2 \alpha_\kappa^{-2} \asymp_\nu 1$ for some $\nu \geq 1$ independent of $\sigma$.
		Then,
\begin{equation*}
	\inf_\qtilde \sup_{\sol \in \csol} \sup_{\ev \in \cev} \Eb [ ( \qtilde - \qfunc(\theta) )^2 ] \gtrsim \min_{k \in \Nast} \omega_k^4 \gamma_k^{-4} \max \{ \sigma^2\alpha_k^{-2},1  \}
\end{equation*}
where the infimum is taken over all estimators $\qtilde$ of $\qfunc(\sol)$.
		\item\label{thmLowerSigma:b} 
		It holds
		\begin{equation*}
			\inf_\qtilde \sup_{\sol \in \csol} \sup_{\ev \in \cev} \Eb [ ( \qtilde - \qfunc(\theta) )^2 ] \gtrsim \sigma^2
		\end{equation*}
		where the infimum is taken over all estimators $\qtilde$ of $\qfunc(\sol)$.
	\end{enumerate}
\end{theorem}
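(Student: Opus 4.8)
The plan is to derive both lower bounds by the standard two-point (Le Cam) reduction: exhibit two parameter pairs $(\sol^{(0)},\ev^{(0)})$ and $(\sol^{(1)},\ev^{(1)})$ in $\csol\times\cev$ whose induced laws of $(X,Y)$ have bounded Kullback--Leibler divergence but whose functional values $\qfunc(\sol^{(0)}),\qfunc(\sol^{(1)})$ are separated, and conclude $\inf_\qtilde\sup_{\sol\in\csol}\sup_{\ev\in\cev}\Eb[(\qtilde-\qfunc(\sol))^2]\gtrsim|\qfunc(\sol^{(0)})-\qfunc(\sol^{(1)})|^2$ (via Pinsker, once the KL divergence is $\le 1/2$). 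Since $\qfunc$ depends on $\sol$ alone, a bound in terms of $\sigma$ cannot come from perturbing the signal inside $X$, as in the classical proof of the $\epsilon^2$ rate; instead I would use a \emph{compensation construction}: perturb a single eigenvalue $\ev_\kappa$ by an amount of order $\sigma$ and rescale $\sol_\kappa$ simultaneously so that the product $\ev_\kappa\sol_\kappa$ — the mean of $X_\kappa$ — is left unchanged. Then the two joint laws agree in every coordinate of $X$ and in $Y_j$ for $j\neq\kappa$, and differ only in the law of $Y_\kappa\sim\Nc(\ev_\kappa,\sigma^2)$, a pair of Gaussians with common variance and means $O(\sigma)$ apart; yet $\qfunc$ has moved because $\sol_\kappa$ has. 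Throughout I take $\solcirc=0$ as in the rest of the section.

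For Part~\ref{thmLowerSigma:b} I would apply this at the first coordinate, where $\alpha_1=\gamma_1=\omega_1=1$: put $\ev^{(0)}_1=1$, $\ev^{(1)}_1=1+\sigma$, $\sol^{(0)}_1=L$, $\sol^{(1)}_1=L/(1+\sigma)$, and keep all remaining coordinates at $\ev_j=\alpha_j$, $\sol_j=0$ (valid whenever $1+\sigma\le d$; the complementary range of $\sigma$, where $\sigma^2$ is bounded away from $0$, is covered by the same construction with the perturbation truncated at $d-1$). Both pairs lie in $\csol\times\cev$, the product $\ev_1\sol_1$ equals $L$ under both hypotheses, so the KL divergence of the two joint laws is exactly $\sigma^2/(2\sigma^2)=1/2$, while $|\qfunc(\sol^{(0)})-\qfunc(\sol^{(1)})|=L^2(1-(1+\sigma)^{-2})\asymp L^2\sigma$. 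The two-point bound then yields $\inf_\qtilde\sup_{\sol\in\csol}\sup_{\ev\in\cev}\Eb[(\qtilde-\qfunc(\sol))^2]\gtrsim\sigma^2$, with a constant depending on $L$ and $d$.

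For Part~\ref{thmLowerSigma:a} I would first record that, $\kappa$ being the minimiser in the statement and $\sigma^2\alpha_\kappa^{-2}\asymp_\nu 1$, the quantity $\min_k\omega_k^4\gamma_k^{-4}\max\{\sigma^2\alpha_k^{-2},1\}$ equals $\omega_\kappa^4\gamma_\kappa^{-4}\max\{\sigma^2\alpha_\kappa^{-2},1\}\asymp_\nu\omega_\kappa^4\gamma_\kappa^{-4}$, so it suffices to produce a lower bound of order $\omega_\kappa^4\gamma_\kappa^{-4}$. Fix a small constant $h=h(\nu,d)>0$ with $h<\min\{1-1/d,\,d-1\}$ (so the eigenvalue perturbation stays in $\cev$) and $2h^2\nu\le 1/2$, and set $\ev^{(0)}_\kappa=(1+h)\alpha_\kappa$, $\ev^{(1)}_\kappa=(1-h)\alpha_\kappa$, $\sol^{(1)}_\kappa=L\gamma_\kappa^{-1}$, $\sol^{(0)}_\kappa=\tfrac{1-h}{1+h}L\gamma_\kappa^{-1}$, all other coordinates as above; both pairs lie in $\csol\times\cev$. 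The matching $\ev^{(0)}_\kappa\sol^{(0)}_\kappa=\ev^{(1)}_\kappa\sol^{(1)}_\kappa$ localises the difference of the joint laws to $Y_\kappa$, whose KL divergence is $2h^2\alpha_\kappa^2/\sigma^2\le 2h^2\nu\le 1/2$ by the hypothesis, while $|\qfunc(\sol^{(0)})-\qfunc(\sol^{(1)})|=\tfrac{4h}{(1+h)^2}\,\omega_\kappa^2L^2\gamma_\kappa^{-2}\asymp\omega_\kappa^2\gamma_\kappa^{-2}$ for this fixed $h$. The two-point bound then gives $\gtrsim\omega_\kappa^4\gamma_\kappa^{-4}$, as required.

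The main obstacle is the tension, inherent to lower bounds for quadratic functionals, between separating the functional values and keeping the hypotheses statistically close; here it is controlled by the \emph{relative} uncertainty $\sigma/\alpha_\kappa$ of $\ev_\kappa\asymp\alpha_\kappa$. A perturbation of $\ev_\kappa$ by $h\alpha_\kappa$ costs a KL of order $h^2\alpha_\kappa^2/\sigma^2$ but buys an order-$h$ relative change of $\sol_\kappa$, hence a change of $\qfunc$ of order $h\,\omega_\kappa^2\gamma_\kappa^{-2}$; the two can be balanced at order $1$ and order $\omega_\kappa^2\gamma_\kappa^{-2}$ respectively precisely when $\sigma\asymp\alpha_\kappa$, which is exactly the assumption $\sigma^2\alpha_\kappa^{-2}\asymp_\nu 1$ — without it the eigenvalue $\ev_\kappa$ is essentially known and the corresponding term of $\qfunc$ cannot be hidden. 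I would also note that the whole scheme needs $d>1$, i.e.\ a genuine (if bounded) uncertainty in the operator, which I take to be part of the standing assumptions on $\cev$; when $d=1$ the $Y$-sample is informationless and no $\sigma$-type lower bound of this shape can hold.
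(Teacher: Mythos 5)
Your proposal is correct and follows essentially the same route as the paper: a two-point reduction in which the eigenvalue $\ev_\kappa$ is perturbed at scale $\sigma$ (resp.\ at a constant relative scale under $\sigma\asymp_\nu\alpha_\kappa$) while $\sol_\kappa$ is rescaled so that the product $\ev_\kappa\sol_\kappa$ is unchanged, confining the KL divergence to the single coordinate $Y_\kappa$ while the quadratic functional moves by the required amount. The paper implements the same compensation idea with symmetric perturbations $(1\pm\zeta)$ and the Hellinger-affinity reduction of its Appendix, but the constructions and the resulting bounds are equivalent.
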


As for Theorem~\ref{thm:lower:epsilon:2}, essentially two scenarios can occur.
In the first case, the sequence $\omega^4 \alpha^{-2} \gamma^{-4}$ is eventually monotone and tends to infinity.
Then, Statement~\ref{thmLowerSigma:a} of Theorem~\ref{thmLowerSigma} gives the desired lower bound and the additional assumption in Statement~\ref{thmLowerSigma:a} is fulfilled by all our examples.
In the second case, the sequence $\omega^ 4\alpha^{-2}\gamma^{-4}$ is bounded from above, and we apply Statement~\ref{thmLowerSigma:b} in order to get the parametric rate $\sigma^2$ as a lower bound. \section{Application to testing problems}\label{secTesting}
As announced in the introduction we apply the theory developed in the previous sections to signal detection and goodness-of-fit testing separately.

\subsection{Signal detection}\label{ssSignal} We start by considering the signal detection problem of testing
\begin{equation*}
	\Hc_0\colon \sol = 0 \qquad \text{against} \qquad \Hc_1\colon \sol \in \setAlt(r)
\end{equation*}
for $r > 0$ where $\setAlt(r) = \csol \cap \{ \sol \colon \Vert \sol \Vert_2 \geq r \}$.
It turns out that for this problem it is sufficient to consider observations~\eqref{eqObsX}, and to construct a test statistic which is based on an estimator of the quadratic functional $\qfuncsd(\soltilde) = \sum_{j=1}^{\infty} \alpha_j^{-2} \soltilde_j^2$ where $\soltilde = \lambda \sol$.
Note that the estimation of this quadratic functional from~\eqref{eqObsX} is not an inverse but a direct problem since, in terms of $\soltilde$, \eqref{eqObsX} reads
\begin{equation}\label{eqObsTildeX}
	X_j = \soltilde_j + \epsilon \xi_j.
\end{equation}
Moreover, the smoothness assumptions in the original model transfer to smoothness assumptions for $\soltilde$, namely that $\soltilde$ belongs to an ellipsoid with weight sequence $\widetilde{\gamma} = \gamma \alpha^{-1}$.
In addition, the weight sequence in the definition $\qfuncsd(\soltilde)$ is $\omega_j = \alpha_j^{-1}$ in this case.
The choice of the truncation value for our auxiliary estimator is slightly different from the optimal choice in Corollary~\ref{corDiagOpt}.
More precisely, we put
\begin{equation*}
 \ksd = \argmin_{k \in \Nast} \max \Bigg\{  \epsilon^4 \sum_{j=1}^k \alpha_j^{-4} ,\gamma_k^{-4} \Bigg\},
\end{equation*}
and define
\begin{equation}
	\qhat_{\ksd} = \sum_{j=1}^{\ksd} \alpha_j^{-2} (X_j^2 - \epsilon^2).\label{eqDefQhatsd}
\end{equation}
Now, in order to prove the upper bound for the minimax testing rate, introduce the test statistic 
\begin{equation*}
	\Deltahatsd = \1_{\{\qhatksd \geq \Ctilde \phi_\epsilon^2 \} } \quad \text{where} \quad \phi_\epsilon^2 = \epsilon^2 \sqrt{\sum_{j=1}^{\ksd} \alpha_j^{-4}},
\end{equation*}
and $\Ctilde$ is a numerical constant that has to be chosen appropriately, see Theorem~\ref{UpperSignalDetection} below.
The proof of the following Theorem~\ref{UpperSignalDetection} shows that the test statistic $\Deltahatsd$ satisfies property~\ref{defRateTesting1} in Definition~\ref{defRateTesting} for the rate $\phi_\epsilon^2$.

\begin{theorem}\label{UpperSignalDetection}
	Let Assumption~\ref{assSeq} be satisfied, and assume that in addition $\gamma_{\ksd}^{-2} \leq \sqrt{\nu} \phi_\epsilon^2$ for some $\nu \geq 1$.
	Let $\delta \in (0,1)$ be fixed. Then, $\Rc(C\phi_\epsilon) \leq \delta$ for all sufficiently large $C$.
\end{theorem}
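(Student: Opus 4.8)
The plan is to establish Condition~\ref{defRateTesting1} of Definition~\ref{defRateTesting} directly for the test $\Deltahatsd$, by bounding the type~I error $\Pb_0(\Deltahatsd = 1)$ and the maximal type~II error $\sup_{\sol \in \setAlt(C\phi_\epsilon)} \Pb_\sol(\Deltahatsd = 0)$ each by $\delta/2$, and by fixing the constants in the order: first $\Ctilde$ (to control the type~I error), then $C$ (to control the type~II error, so that the constant $C^\ast$ of Definition~\ref{defRateTesting} is the resulting threshold). Since $\Deltahatsd$ is a legitimate test statistic (it depends only on $X$), this gives $\Rc(C\phi_\epsilon)\leq \delta$. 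Working in the reduced direct model~\eqref{eqObsTildeX} with $\soltilde = \ev\sol$, everything rests on two elementary identities obtained from $X_j^2-\epsilon^2 = \soltilde_j^2 + 2\epsilon\soltilde_j\xi_j + \epsilon^2(\xi_j^2-1)$, the independence of the coordinates and $\operatorname{Cov}(\xi_j,\xi_j^2)=0$, namely
\[
	\Eb_\sol[\qhatksd] = \sum_{j=1}^{\ksd}\alpha_j^{-2}\soltilde_j^2 \qquad\text{and}\qquad \operatorname{Var}_\sol(\qhatksd) = 2\phi_\epsilon^4 + 4\epsilon^2\sum_{j=1}^{\ksd}\alpha_j^{-4}\soltilde_j^2,
\]
where I used $\phi_\epsilon^4 = \epsilon^4\sum_{j=1}^{\ksd}\alpha_j^{-4}$.

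For the type~I error there is nothing to it: under $\Hc_0$ we have $\soltilde=0$, hence $\Eb_0[\qhatksd]=0$ and $\operatorname{Var}_0(\qhatksd)=2\phi_\epsilon^4$, and Chebyshev's inequality gives $\Pb_0(\Deltahatsd=1)=\Pb_0(\qhatksd\geq\Ctilde\phi_\epsilon^2)\leq 2\Ctilde^{-2}$, which is $\leq\delta/2$ once $\Ctilde$ is fixed (depending only on $\delta$) large enough.

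The substance is the type~II bound. Fix $\sol\in\setAlt(C\phi_\epsilon)$, i.e.\ $\sol\in\csol$ with $\Vert\sol\Vert_2\geq C\phi_\epsilon$, and an arbitrary $\ev\in\cev$. The constraint $\ev\in\cev$ forces $\alpha_j^{-2}\ev_j^2\geq d^{-2}$, so $\Eb_\sol[\qhatksd]\geq d^{-2}\sum_{j=1}^{\ksd}\sol_j^2$; the truncation bias is controlled by the ellipsoid constraint and the monotonicity of $\gamma$ via $\sum_{j>\ksd}\sol_j^2 \leq \gamma_{\ksd}^{-2}\sum_{j>\ksd}\gamma_j^2\sol_j^2 \leq \gamma_{\ksd}^{-2}L^2$, which by the extra hypothesis $\gamma_{\ksd}^{-2}\leq\sqrt{\nu}\phi_\epsilon^2$ is at most $\sqrt{\nu}L^2\phi_\epsilon^2$. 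Hence $S := \sum_{j=1}^{\ksd}\sol_j^2 \geq (C^2-\sqrt{\nu}L^2)\phi_\epsilon^2 \geq \tfrac12 C^2\phi_\epsilon^2$ for $C$ large, so $\Eb_\sol[\qhatksd]\geq \tfrac{C^2}{2d^2}\phi_\epsilon^2$, which exceeds $2\Ctilde\phi_\epsilon^2$ once $C^2\geq 4d^2\Ctilde$. For the variance I bound the signal term using $\soltilde_j^2\leq d^2\alpha_j^2\sol_j^2$, Cauchy--Schwarz, and $\sum_j\sol_j^4\leq(\sum_j\sol_j^2)^2=S^2$:
\[
	\sum_{j=1}^{\ksd}\alpha_j^{-4}\soltilde_j^2 \leq d^2\sum_{j=1}^{\ksd}\alpha_j^{-2}\sol_j^2 \leq d^2\Big(\sum_{j=1}^{\ksd}\alpha_j^{-4}\Big)^{1/2}\Big(\sum_{j=1}^{\ksd}\sol_j^4\Big)^{1/2} \leq d^2\epsilon^{-2}\phi_\epsilon^2\,S,
\]
so $\operatorname{Var}_\sol(\qhatksd)\leq 2\phi_\epsilon^4+4d^2\phi_\epsilon^2 S$. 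Chebyshev's inequality with deviation $t=\Eb_\sol[\qhatksd]-\Ctilde\phi_\epsilon^2\geq\tfrac12\Eb_\sol[\qhatksd]\geq\tfrac12 d^{-2}S$ then yields
\[
	\Pb_\sol(\Deltahatsd=0)=\Pb_\sol(\qhatksd<\Ctilde\phi_\epsilon^2) \leq \frac{2\phi_\epsilon^4+4d^2\phi_\epsilon^2 S}{(d^{-2}S/2)^2} = \frac{8d^4\phi_\epsilon^4}{S^2}+\frac{16d^6\phi_\epsilon^2}{S} \leq \frac{32d^4}{C^4}+\frac{32d^6}{C^2},
\]
using $S\geq\tfrac12 C^2\phi_\epsilon^2$ in the last step; this is uniform in $\sol\in\setAlt(C\phi_\epsilon)$ and in $\ev\in\cev$, and is $\leq\delta/2$ for $C$ large. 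Adding the two contributions gives $\Rc(C\phi_\epsilon)\leq\delta$ for all $C$ above a threshold depending only on $\delta$ (and the fixed parameters $d,L,\nu$).

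The only genuinely delicate point is the type~II step, and within it the propagation of the separation $\Vert\sol\Vert_2\geq C\phi_\epsilon$ through the bias of $\qhatksd$: the extra hypothesis $\gamma_{\ksd}^{-2}\leq\sqrt{\nu}\phi_\epsilon^2$ is precisely what makes the truncation bias negligible against $r^2=C^2\phi_\epsilon^2$, so that the mean of $\qhatksd$ sits comfortably above the threshold. One must also prevent the signal-dependent part of the variance from spoiling the bound, but the structure is favourable: that term is of order $\phi_\epsilon^2 S$ while the squared gap to the threshold is of order $(d^{-2}S)^2$, so the ratio carries a vanishing factor $\phi_\epsilon^2/S\lesssim C^{-2}$. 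Everything else reduces to Chebyshev's inequality together with the two moment identities for $\qhatksd$.
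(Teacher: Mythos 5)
Your proof is correct and follows essentially the same route as the paper's: a Markov/Chebyshev bound on the type I error using the null variance $2\phi_\epsilon^4$, and a Chebyshev bound on the type II error based on $\Eb_\sol[\qhatksd]\geq d^{-2}\sum_{j\leq\ksd}\sol_j^2$ together with the tail bound $\sum_{j>\ksd}\sol_j^2\leq L^2\gamma_{\ksd}^{-2}\leq L^2\sqrt{\nu}\phi_\epsilon^2$ supplied by the extra hypothesis. The only differences are cosmetic simplifications: you avoid the paper's two-case split on the size of $\sum_{j\leq\ksd}\sol_j^2$ by noting that for $C$ large this sum is always at least $\tfrac12 C^2\phi_\epsilon^2$, and you control the signal-dependent variance term by Cauchy--Schwarz rather than by the monotonicity of $\alpha$.
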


The next theorem provides the corresponding lower bound in the sense of Condition~\ref{defRateTesting2} from Definition~\ref{defRateTesting}.

\begin{theorem}\label{thmLowerSignalDetection}
	Let $\delta \in (0,1)$ be arbitrary.
	Let Assumption~\ref{assSeq} hold true, and assume in addition that $\epsilon^4 \sum_{j=1}^{\ksd} \alpha_j^{-4} \asymp_\nu \gamma_{\ksd}^{-4}$.
	Then, there exists $C_\ast > 0$ such that for all $0 < c < C_\ast$ the inequality $\Rc (c\phi_\epsilon) \geq 1-\delta$ holds.
\end{theorem}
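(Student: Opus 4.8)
The plan is to reduce the testing lower bound to a standard two-point/mixture argument of Ingster--Suslina type, using the auxiliary Lemma~\ref{lemLowerTesting} announced in the appendix. Since the signal detection test statistic $\qhatksd$ depends only on the observation~\eqref{eqObsX}, which in terms of $\soltilde = \lambda\sol$ is the \emph{direct} Gaussian sequence model~\eqref{eqObsTildeX}, I would work entirely in the $\soltilde$-parametrisation: the null becomes $\soltilde = 0$, and the alternative $\setAlt(r)$ pulls back to a set of $\soltilde$ that lie in the ellipsoid with weight sequence $\widetilde\gamma = \gamma\alpha^{-1}$ and satisfy a lower bound on a weighted norm — but one must be careful, since $\Vert\sol\Vert_2 \geq r$ translates into $\sum_j \alpha_j^2 \soltilde_j^{-2}$-type conditions only through the constraint $\lambda\in\cev$. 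Concretely, since $d^{-1}\alpha_j \le |\lambda_j| \le d\alpha_j$, a configuration with $\soltilde$ supported on coordinates $j \le \ksd$ and $\soltilde_j^2 \asymp \epsilon^2$ there corresponds to $\sol_j^2 = \soltilde_j^2/\lambda_j^2 \asymp \epsilon^2\alpha_j^{-2}$, and I would fix $\lambda_j = \alpha_j$ (allowed for $d\ge 1$) so that the pullback is clean.

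The core construction: take the prior under the alternative to be a product of symmetric two-point priors, $\soltilde_j = \pm v_j$ with equal probability, independently over $j \in \llbracket 1,\ksd\rrbracket$ and $\soltilde_j = 0$ for $j > \ksd$, where $v_j$ is chosen so that (i) the ellipsoid constraint $\sum_{j\le\ksd}\widetilde\gamma_j^2 v_j^2 \lesssim L^2$ holds — using $\widetilde\gamma_j \le \widetilde\gamma_{\ksd}$ this amounts to $\ksd \cdot \widetilde\gamma_{\ksd}^2 \max_j v_j^2 \lesssim L^2$, which via $\widetilde\gamma_{\ksd}^{-2} = \gamma_{\ksd}^{-2}\alpha_{\ksd}^2$ and the balancing relation is of the right order — and (ii) the separation $\Vert\sol\Vert_2^2 = \sum_{j\le\ksd} v_j^2\alpha_j^{-2} \gtrsim r^2$. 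The natural scaling is $v_j \asymp \epsilon \, c_j$ with $c_j$ chosen to make the $\chi^2$-divergence between the null Gaussian measure and the mixture bounded. Recall that for a product of two-point priors the $\chi^2$-affinity has the explicit product form $\prod_{j\le\ksd}\cosh(v_j^2/\epsilon^2)$ (up to the usual constant in the exponent), so boundedness of $\sum_{j\le\ksd} v_j^4/\epsilon^4$ suffices; with $v_j^2 \asymp \epsilon^2/\sqrt{\ksd}$ this sum is $\asymp \ksd \cdot 1/\ksd = O(1)$. Then the induced squared separation is $r^2 \asymp \sum_{j\le\ksd} (\epsilon^2/\sqrt{\ksd})\,\alpha_j^{-2}$, and I would identify this with $\phi_\epsilon^2 = \epsilon^2\sqrt{\sum_{j\le\ksd}\alpha_j^{-4}}$ up to constants — here the Cauchy--Schwarz type relation $\sum_{j\le\ksd}\alpha_j^{-2} \lesssim \sqrt{\ksd}\sqrt{\sum_{j\le\ksd}\alpha_j^{-4}}$ together with the definition of $\ksd$ and the hypothesis $\epsilon^4\sum_{j=1}^{\ksd}\alpha_j^{-4}\asymp_\nu\gamma_{\ksd}^{-4}$ does the bookkeeping, so that for $c$ small enough all of this prior mass sits inside $\setAlt(c\phi_\epsilon)$.

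Once the prior is in place, Lemma~\ref{lemLowerTesting} (the appendix lemma the excerpt says the lower-bound proofs rest on) yields $\Rc(c\phi_\epsilon) \ge 1 - \tfrac12\sqrt{\chi^2_{\max}}$ where $\chi^2_{\max}$ is a supremum over $\lambda\in\cev$ of the $\chi^2$-divergence between the null mixture and the alternative mixture; choosing the constant controlling $v_j$ small forces $\chi^2_{\max} \le 4\delta^2$, hence $\Rc(c\phi_\epsilon)\ge 1-\delta$, which is exactly Condition~\ref{defRateTesting2}. The value $C_\ast$ is then read off as the threshold on $c$ below which both the $\chi^2$-bound and the membership $\soltilde \in$ (pullback of $\setAlt(c\phi_\epsilon)$) hold. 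The main obstacle I anticipate is not the $\chi^2$ computation — that is the routine Ingster--Suslina calculation — but the translation between the $\Vert\sol\Vert_2$-separation in the original problem and the $\alpha^{-2}$-weighted $\ell^2$-norm of $\soltilde$ while simultaneously respecting the ellipsoid constraint on $\soltilde$: one has to check that the same sequence $(v_j)$ can be made to satisfy the smoothness budget, the separation budget, and the divergence budget all at once, and this is precisely where the balancing definition of $\ksd$ and the hypothesis $\epsilon^4\sum_{j\le\ksd}\alpha_j^{-4}\asymp_\nu\gamma_{\ksd}^{-4}$ enter. A secondary subtlety is handling the supremum over $\lambda\in\cev$: since the alternative is a set of $\sol$ and the test sees $\lambda\sol$, I would either fix a single worst-case $\lambda$ (e.g.\ $\lambda_j=\alpha_j$) as above, which is legitimate because the lower bound only needs \emph{one} hard $\lambda$, or carry the factor-$d$ slack through the constants.
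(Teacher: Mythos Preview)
Your overall strategy---a product of symmetric two-point priors on the first $\ksd$ coordinates, followed by a $\chi^2$-bound and an appeal to Lemma~\ref{lemLowerTesting}---is exactly the one the paper uses. The gap is in your choice of amplitudes. You take $v_j^2 \asymp \epsilon^2/\sqrt{\ksd}$, \emph{constant} in $j$, and then try to identify the resulting separation
\[
\Vert\sol\Vert_2^2 \;=\; \sum_{j\le\ksd} v_j^2\,\alpha_j^{-2} \;\asymp\; \frac{\epsilon^2}{\sqrt{\ksd}}\sum_{j\le\ksd}\alpha_j^{-2}
\]
with $\phi_\epsilon^2 = \epsilon^2\sqrt{\sum_{j\le\ksd}\alpha_j^{-4}}$. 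The Cauchy--Schwarz inequality you invoke, $\sum_{j\le\ksd}\alpha_j^{-2}\le\sqrt{\ksd}\,\sqrt{\sum_{j\le\ksd}\alpha_j^{-4}}$, goes the \emph{wrong way}: it gives only $\Vert\sol\Vert_2^2 \lesssim \phi_\epsilon^2$, whereas membership in $\setAlt(c\phi_\epsilon)$ requires the reverse bound $\Vert\sol\Vert_2^2 \ge c^2\phi_\epsilon^2$. In the severely ill-posed case $\alpha_j\asymp e^{-aj}$ one has $\sum_{j\le k}\alpha_j^{-2}\asymp \alpha_k^{-2}$ and $\sqrt{\sum_{j\le k}\alpha_j^{-4}}\asymp \alpha_k^{-2}$, so your construction produces $\Vert\sol\Vert_2^2\asymp \phi_\epsilon^2/\sqrt{\ksd}$, and no $\epsilon$-independent $C_\ast$ can make the prior sit inside $\setAlt(c\phi_\epsilon)$.

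The fix is to make the amplitudes coordinate-dependent. The paper (mirroring the proof of Theorem~\ref{thmLowerEps1}) takes, directly in the $\sol$-coordinates and with $\lambda=\lambda^\circ\in\cev$ fixed,
\[
\sol_i^\tau \;=\; \tau_i\,c\,\epsilon\,\frac{\alpha_i^{-2}}{\bigl(\sum_{j\le\ksd}\alpha_j^{-4}\bigr)^{1/4}},\qquad i\in\llbracket 1,\ksd\rrbracket,
\]
which gives $\Vert\sol^\tau\Vert_2^2 = c^2\phi_\epsilon^2$ \emph{exactly}, while $\sum_{j}\bigl(\lambda_j^\circ\sol_j^\tau\bigr)^4/\epsilon^4 \le d^4c^4$ keeps the $\chi^2$-divergence bounded by $\exp(c_2 c^2)-1$, and the balancing hypothesis $\epsilon^4\sum_{j\le\ksd}\alpha_j^{-4}\asymp_\nu\gamma_{\ksd}^{-4}$ yields $\sol^\tau\in\csol$ for $c^2\le L^2\nu^{-1/2}$. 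With this choice all three budgets (smoothness, separation, divergence) are met simultaneously, and taking $c$ small enough that $\chi^2\le\delta^2$ gives $\Rc(c\phi_\epsilon)\ge 1-\delta$ via Lemma~\ref{lemLowerTesting}\ref{lemLowerTesting1}. Your detour through the $\soltilde$-parametrisation is harmless but unnecessary; the issue is purely the constant-amplitude ansatz.
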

	
Specializing the results of Theorems~\ref{UpperSignalDetection} and~\ref{thmLowerSignalDetection} with our standard illustrations, we obtain the minimax rates of testing for signal detection in all the considered cases.
These are summarized in Table~\ref{tableRatesSignal}.
	
{\centering
\begin{table}
\caption{Optimal minimax rates of testing for the signal detection problem under the assumptions of Theorems~\ref{UpperSignalDetection}  and \ref{thmLowerSignalDetection}.}\label{tableRatesSignal}
\renewcommand{\arraystretch}{1.8}
\footnotesize
\begin{tabularx}{14cm}{c!{\color{gray!25}\vrule}cc}
     & Sobolev class \color{Ivory4}(\boldmath$\gamma_j = j^p$) & Analytic class \color{Ivory4}(\boldmath$\gamma_j \asymp e^{pj}$) \\
    \arrayrulecolor{gray!25}\hline
    Mildly ill-posed \boldmath(\color{Ivory4}$\alpha_j = j^{-a}$) & $\epsilon^{\frac{8p}{4a+4p+1}}$ & $\epsilon^2 \vert \log \epsilon \vert^{2a+\frac{1}{2}}$ \\    
    Severely ill-posed \color{Ivory4}(\boldmath$\alpha_j \asymp e^{-aj}$) & $\vert \log \epsilon \vert^{-2p}$ & $\epsilon^{\frac{2p}{a+p}}$ 
\end{tabularx}
\normalsize
\end{table}}

\subsection{Goodness-of-fit testing}\label{ssGoF} In this subsection, we consider the goodness-of-fit testing problem given by testing
\begin{equation*}
	\Hc_0 \colon \sol = \solcirc \in \csol \qquad \text{against} \qquad \Hc_1 \colon \sol \in \csol, \sol-\solcirc \in \setAlt(r).
\end{equation*}
In contrast to the signal detection problem considered above, the minimax rate of testing will now depend also on the noise level $\sigma$.
In the sequel, we make the technical assumption that \emph{all} the components of $\solcirc$ are non-zero: if this is not the case, one applies the signal detection methodology from Subsection~\ref{ssSignal} to test the components $\solcirc_j$ where $\solcirc_j = 0$ and combines this approach with the results derived in the sequel.
The fact that $\solcirc_j$ is non-zero is explicitly exploited in the proof of Theorem~\ref{LowerGoF} where it guarantees that the constructed hypotheses concerning the solution $\sol$ are distinct.
We consider the estimator $\qhatkgof$ of the quadratic functional $\qfuncgof(\sol) = \sum_{j=1}^{\infty} (\sol_j - \solcirc_j)^2$ (that is, $\omega_j \equiv 1$ in terms of our general notation) defined through
\begin{equation*}
	\qhatkgof = \sum_{j=1}^{\kgof} \frac{U_j}{V_j} \1_{\Omega_j}
\end{equation*}
with $U_j$, $V_j$, $\Omega_j$ defined as in Subsection~\ref{subsTSE}, and $\kgof$ defined as
\begin{equation*}
 \kgof = \argmin_{k \in \Nast} \max \bigg\{ \epsilon^2 \sqrt{\sum_{j=1}^k \alpha_j^{-4}}, \sigma^2 \max_{j \in \llbracket 1, k\rrbracket} \alpha_j^{-2}\gamma_j^{-2}, \gamma_k^{-2} \bigg\}
\end{equation*}
(again the definition of the threshold $\kgof$ slightly differs from the one in Corollary~\ref{corDiagOpt}).
Let us introduce the test statistic
\begin{equation}\label{defDeltahatgof}
	\Deltahatgof = \1_{ \{ \qhatkgof \geq \Ctilde \phi_{\epsilon, \sigma}^2 \} } \quad \text{where} \quad \phi_{\epsilon, \sigma}^2 = \max \bigg\{  \epsilon^2 \sqrt{\sum_{j=1}^{\kgof} \alpha_j^{-4}}, \sigma^2 \max_{j \in \llbracket 1,\kgof\rrbracket} \alpha_j^{-2}\gamma_j^{-2} \bigg\}.
\end{equation}
The following theorem shows that this statistic satisfies the upper bound condition~\ref{defRateTesting2} for $\Ctilde$ suitably chosen.

\begin{theorem}\label{thmUpperGoF}
 Let Assumption~\ref{assSeq} be satisfied, and assume that in addition $\gamma_{\kgof}^{-2} \leq \sqrt{\nu} \phi_{\epsilon, \sigma}^2$ for some $\nu \geq 1$.
Let $\delta \in (0,1)$ be fixed. Then, we have $\Rc(C\phi_\epsilon) \leq \delta$ for all sufficiently large $C$.
\end{theorem}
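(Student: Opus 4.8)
The plan is to bound the type~I error and the maximal type~II error of $\Deltahatgof$ separately, in both cases by a second-moment (Chebyshev/Markov) argument resting on Theorem~\ref{thmUpper} specialised to $\omega_j\equiv 1$ and $k=\kgof$. Throughout put $q=\Vert\sol-\solcirc\Vert_2^2=\qfuncgof(\sol)$ and $q_{\kgof}=\sum_{j=1}^{\kgof}(\sol_j-\solcirc_j)^2$; note that by \eqref{defDeltahatgof} one has $\epsilon^2\sqrt{\sum_{j\le\kgof}\alpha_j^{-4}}\le\phi_{\epsilon,\sigma}^2$ and $\sigma^2\max_{j\le\kgof}\alpha_j^{-2}\gamma_j^{-2}\le\phi_{\epsilon,\sigma}^2$, while Assumption~\ref{assSeq} (with $\omega\equiv 1$) makes $\gamma$ non-decreasing with $\gamma_j\ge 1$, and $\alpha$ non-increasing with $\alpha_j\le 1$.

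\emph{Type~I error.} Under $\Hc_0$ one has $\sol=\solcirc$, so in the bound of Theorem~\ref{thmUpper} every summand carrying a factor $(\sol_j-\solcirc_j)^2$ vanishes, as does the squared-bias term, leaving
\begin{equation*}
 \Eb_0[\qhatkgof^2]=\Eb_0[(\qhatkgof-\qfuncgof(\solcirc))^2]\le C(d)\,\epsilon^4\sum_{j=1}^{\kgof}\alpha_j^{-4}+C(d)\,\sigma^4\sum_{j=1}^{\kgof}\alpha_j^{-4}(\solcirc_j)^4 .
\end{equation*}
The first term equals $C(d)\bigl(\epsilon^2\sqrt{\sum_{j\le\kgof}\alpha_j^{-4}}\bigr)^2\le C(d)\phi_{\epsilon,\sigma}^4$. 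For the second I would use $(\solcirc_j)^2\le L^2\gamma_j^{-2}$ and $\alpha_j^{-2}(\solcirc_j)^2=(\alpha_j^{-2}\gamma_j^{-2})\,\gamma_j^2(\solcirc_j)^2$ together with $\solcirc\in\csol$, which give $\max_{j\le\kgof}\alpha_j^{-2}(\solcirc_j)^2\le L^2\max_{j\le\kgof}\alpha_j^{-2}\gamma_j^{-2}$ and $\sum_{j\le\kgof}\alpha_j^{-2}(\solcirc_j)^2\le L^2\max_{j\le\kgof}\alpha_j^{-2}\gamma_j^{-2}$, hence $\sigma^4\sum_{j\le\kgof}\alpha_j^{-4}(\solcirc_j)^4\le L^4\bigl(\sigma^2\max_{j\le\kgof}\alpha_j^{-2}\gamma_j^{-2}\bigr)^2\le L^4\phi_{\epsilon,\sigma}^4$. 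Thus $\Eb_0[\qhatkgof^2]\le C(d,L)\phi_{\epsilon,\sigma}^4$, and Markov's inequality yields $\Pb_0(\Deltahatgof=1)=\Pb_0\bigl(|\qhatkgof|\ge\Ctilde\phi_{\epsilon,\sigma}^2\bigr)\le C(d,L)\Ctilde^{-2}$, which is $\le\delta/2$ once $\Ctilde$ is chosen large enough in terms of $d,L,\delta$; this fixes $\Ctilde$.

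\emph{Type~II error.} Fix $\sol$ with $\sol\in\csol$, $\sol-\solcirc\in\csol$ and $q\ge C^2\phi_{\epsilon,\sigma}^2$. The discarded tail obeys $\sum_{j>\kgof}(\sol_j-\solcirc_j)^2\le\gamma_{\kgof}^{-2}\sum_{j>\kgof}\gamma_j^2(\sol_j-\solcirc_j)^2\le L^2\gamma_{\kgof}^{-2}\le\sqrt{\nu}\,L^2\phi_{\epsilon,\sigma}^2$ by the standing hypothesis, so $q_{\kgof}\ge q-\sqrt{\nu}L^2\phi_{\epsilon,\sigma}^2$, which for $C$ large (depending on $\nu,L,\Ctilde$) forces $q_{\kgof}-\Ctilde\phi_{\epsilon,\sigma}^2\ge q/2$. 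Then by Chebyshev
\begin{equation*}
 \Pb_\sol(\Deltahatgof=0)=\Pb_\sol\bigl(q_{\kgof}-\qhatkgof>q_{\kgof}-\Ctilde\phi_{\epsilon,\sigma}^2\bigr)\le\frac{4\,\Eb_\sol[(\qhatkgof-q_{\kgof})^2]}{q^2},
\end{equation*}
and it remains to show $\Eb_\sol[(\qhatkgof-q_{\kgof})^2]\le C(d,L,\nu)\,\phi_{\epsilon,\sigma}^2\,q$. Bounding $(\qhatkgof-q_{\kgof})^2\le 2(\qhatkgof-\qfuncgof(\sol))^2+2\bigl(\sum_{j>\kgof}(\sol_j-\solcirc_j)^2\bigr)^2$ reduces this to the bound of Theorem~\ref{thmUpper} plus the already-controlled tail, $\le 2\nu L^4\phi_{\epsilon,\sigma}^4$. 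I would then handle the seven terms of that bound: the two terms free of $(\sol_j-\solcirc_j)^2$ and the squared-bias term are $\lesssim\phi_{\epsilon,\sigma}^4$ (the first two as in the type~I step, the bias term via $\gamma_{\kgof}^{-4}L^4\le\nu L^4\phi_{\epsilon,\sigma}^4$), while each of the four terms linear in $(\sol_j-\solcirc_j)^2$ is $\lesssim\phi_{\epsilon,\sigma}^2q$ after pulling out its largest coefficient and using $\sum_{j\le\kgof}(\sol_j-\solcirc_j)^2\le q$ — e.g.\ $\epsilon^2\sum_{j\le\kgof}\alpha_j^{-2}(\sol_j-\solcirc_j)^2\le(\epsilon^2\max_{j\le\kgof}\alpha_j^{-2})q\le\phi_{\epsilon,\sigma}^2q$ — with the only subtle case being the term with prefactor $\sigma^4\alpha_j^{-4}\gamma_j^{-2}$, where one first rewrites $\alpha_j^{-4}\gamma_j^{-2}(\sol_j-\solcirc_j)^2=\alpha_j^{-4}\gamma_j^{-4}\cdot\gamma_j^2(\sol_j-\solcirc_j)^2$ and invokes $\sum_j\gamma_j^2(\sol_j-\solcirc_j)^2\le L^2$ to get $\le L^2\sigma^4\max_{j\le\kgof}\alpha_j^{-4}\gamma_j^{-4}\le L^2\phi_{\epsilon,\sigma}^4$ (and $(\solcirc_j)^2\le L^2\gamma_j^{-2}$ for the term carrying $(\solcirc_j)^2$). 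Since $q\ge\phi_{\epsilon,\sigma}^2$ every $\phi_{\epsilon,\sigma}^4$-contribution is absorbed into $\phi_{\epsilon,\sigma}^2q$, so $\Eb_\sol[(\qhatkgof-q_{\kgof})^2]\le C(d,L,\nu)\phi_{\epsilon,\sigma}^2q$ and therefore $\Pb_\sol(\Deltahatgof=0)\le C(d,L,\nu)\phi_{\epsilon,\sigma}^2/q\le C(d,L,\nu)/C^2\le\delta/2$ for $C$ large, uniformly over the alternative. Adding the two estimates gives $\Rc(C\phi_{\epsilon,\sigma})\le\delta$, the claimed bound.

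The Chebyshev/Markov steps, the tail estimate and the bookkeeping of constants are routine; the only real obstacle is the term-by-term inspection above, i.e.\ verifying that each summand of the Theorem~\ref{thmUpper} bound is either of order $\phi_{\epsilon,\sigma}^4$ (the terms not involving the difference $\sol-\solcirc$) or of order $\phi_{\epsilon,\sigma}^2\Vert\sol-\solcirc\Vert_2^2$ (the mixed ones). It is precisely this dichotomy, combined with the separation $\Vert\sol-\solcirc\Vert_2^2\ge C^2\phi_{\epsilon,\sigma}^2$ imposed on the alternative, that makes the type~II probability decay like $1/C^2$, and the comparisons it requires are exactly what the definition of $\phi_{\epsilon,\sigma}$ in \eqref{defDeltahatgof} together with the extra hypothesis $\gamma_{\kgof}^{-2}\le\sqrt{\nu}\phi_{\epsilon,\sigma}^2$ are there to supply; the $\argmin$ defining $\kgof$ plays no role beyond ensuring that this rate is the right one (i.e.\ matches the lower bound).
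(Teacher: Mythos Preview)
Your proof is correct and follows the paper's approach: Markov/Chebyshev on the second moment of $\qhatkgof$, controlled term-by-term via the bounds of (the proof of) Theorem~\ref{thmUpper}. The paper organises the type~II analysis as a two-case split according to whether $\sum_{j\le\kgof}(\sol_j-\solcirc_j)^2\gtrless 2\Ctilde\phi_{\epsilon,\sigma}^2$, tuning $\Ctilde$ in the first case and $C$ in the second; your single-shot argument---lower-bounding $q_{\kgof}-\Ctilde\phi_{\epsilon,\sigma}^2$ by $q/2$ once $C$ is large and letting $C$ do all the work---is a clean streamlining of the same idea.
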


\begin{remark}\label{remLevelAlphaTests}
Note that, given $\alpha, \beta \in (0,1]$, following the same arguments as in the proof of Theorem~\ref{thmUpperGoF}, we could tune the numerical constant $\Ctilde$ in the definition of the test statistic such that $\Pb_0(\Deltahatgof = 1) \leq \alpha$ and $\Pb_\sol(\Deltahatgof = 0) \leq \beta$ for all $\sol \in \setAlt(C\phi_{\epsilon, \sigma})$ with $C$ sufficiently large.
This shows that the order of the separation rate in the sense of \cite{marteau2017minimax} is $\phi_{\epsilon, \sigma}^2$ (this rate was only derived as a lower bound in~\cite{marteau2017minimax} whereas the upper bound in that paper contains an additional logarithmic factor; however the test statistic considered in~\cite{marteau2017minimax} is already adaptive with respect to the class $\cev$ in the sense that its definition does neither depend on $\alpha$ nor on $d$).
It might be of interest to find out if the extra logarithmic factors appearing in the rate of \cite{marteau2017minimax} are optimal in the sense that no adaptive testing procedure can do without these terms.
\end{remark}

We now prove the lower bound on the minimax rate of testing for goodness-of-fit testing.

\begin{theorem}\label{LowerGoF}
	Let $\delta \in (0,1)$ be arbitrary.
	Let Assumption~\ref{assSeq} hold true, and assume that $\phi_{\epsilon,\sigma}^4 \asymp_\nu \gamma_{\kgof}^{-4}$.
	Then, there exists $C_\ast > 0$ such that for all $0 < c < C_\ast$ the inequality $\Rc (c\phi_\epsilon) \geq 1-\delta$ holds.
\end{theorem}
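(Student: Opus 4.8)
The plan is to reduce the testing lower bound to the auxiliary Lemma~\ref{lemLowerTesting} in the appendix, following the standard Ingster-type scheme adapted to the partially unknown operator setting, and closely mirroring the construction used for the estimation lower bounds of Section~\ref{secLower} (in particular Theorem~\ref{thmLowerEps1} and Theorem~\ref{thm:lower:epsilon:2}) together with the signal-detection lower bound Theorem~\ref{thmLowerSignalDetection}. First I would fix $\delta \in (0,1)$ and, writing $\kappa = \kgof$, construct a finite family of alternatives indexed by sign vectors: for a small constant $\rho > 0$ to be calibrated, set $\sol^{(\varsigma)}_j = \solcirc_j + \varsigma_j \rho \, \gamma_\kappa^{-1} \gamma_j^{-1} \1_{\{j \le \kappa\}}$ for $\varsigma \in \{\pm 1\}^\kappa$ (a perturbation supported on the first $\kappa$ coordinates and shaped by $\gamma$ so as to sit inside $\csol$). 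One checks that $\sum_j \gamma_j^2 (\sol^{(\varsigma)}_j - \solcirc_j)^2 \asymp \rho^2$, so for $\rho$ small enough $\sol^{(\varsigma)} \in \csol$ (using $\solcirc \in \csol$ and the triangle inequality in the ellipsoid norm, absorbing constants into $L$), and that $\Vert \sol^{(\varsigma)} - \solcirc \Vert_2^2 \asymp \rho^2 \gamma_\kappa^{-2} \sum_{j\le\kappa}\gamma_j^{-2} \gtrsim \rho^2 \gamma_\kappa^{-2}$; by the hypothesis $\phi_{\epsilon,\sigma}^4 \asymp_\nu \gamma_\kappa^{-4}$ this is $\gtrsim \rho^2 \phi_{\epsilon,\sigma}^2$, so each $\sol^{(\varsigma)}$ lies in $\setAlt(c\phi_{\epsilon,\sigma})$ once $c \le c_0 \rho$ for a suitable $c_0$. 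For the nuisance, since we are proving a lower bound it suffices to fix a single admissible eigenvalue sequence, e.g. $\lambda_j = \alpha_j \in \cev$, under both null and alternative; the fact that all $\solcirc_j \neq 0$ guarantees the hypotheses $\sol^{(\varsigma)}$ are genuinely distinct from $\solcirc$ and from each other.

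Next I would pass to the mixture alternative $\bar{\Pb}_1 = 2^{-\kappa}\sum_{\varsigma} \Pb_{\sol^{(\varsigma)}}$ and invoke Lemma~\ref{lemLowerTesting}, which reduces $\Rc(c\phi_{\epsilon,\sigma}) \ge 1 - \delta$ to showing that the $\chi^2$-divergence (or total variation) between $\bar{\Pb}_1$ and $\Pb_0 = \Pb_{\solcirc}$ is small, uniformly in $\epsilon,\sigma$, provided $\rho$ is chosen small enough depending on $\delta,\nu$. Because the coordinates are independent Gaussians and $\lambda$ is held fixed, the likelihood ratio factorizes over $j \le \kappa$ and the $\chi^2$-affinity admits the classical closed form: one obtains $1 + \chi^2(\bar{\Pb}_1, \Pb_0) = \prod_{j \le \kappa} \cosh\!\big( \lambda_j^2 (\sol^{(\varsigma)}_j-\solcirc_j)(\sol^{(\varsigma')}_j-\solcirc_j)/\epsilon^2 \big)$-type expectation over two independent sign vectors, which is bounded using $\cosh(x) \le \exp(x^2/2)$ by $\Eb \exp\!\big( \tfrac12 \sum_{j \le \kappa} \lambda_j^4 (\sol^{(\varsigma)}_j - \solcirc_j)^4 / \epsilon^4 \big)$, and then by a further averaging argument (or Hoeffding-type bound on the sign product) by roughly $\exp\!\big( C \sum_{j\le\kappa} \alpha_j^4 \rho^4 \gamma_\kappa^{-4}\gamma_j^{-4} / \epsilon^4 \big)$. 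The exponent is $\asymp \rho^4 \epsilon^{-4} \gamma_\kappa^{-4} \sum_{j \le \kappa} \alpha_j^4 \gamma_j^{-4}$, and — exactly as in the proof of Theorem~\ref{thmLowerEps1} and Theorem~\ref{thmLowerSignalDetection} — the defining balance equation for $\kappa = \kgof$ forces $\epsilon^4 \sum_{j\le\kappa}\alpha_j^{-4} \asymp \gamma_\kappa^{-4}$ on the relevant branch (up to the $\nu$-factor), but here the term actually appearing is $\sum_{j\le\kappa} \alpha_j^4 \gamma_j^{-4}$ rather than $\sum_{j\le\kappa}\alpha_j^{-4}$; since $\alpha$ is non-increasing with $\alpha_1 = 1$ and $\gamma$ is non-decreasing, $\alpha_j^4 \gamma_j^{-4} \le 1$, so this sum is at most $\kappa$, and combined with $\phi_{\epsilon,\sigma}^2 \ge \epsilon^2 \sqrt{\sum_{j\le\kappa}\alpha_j^{-4}} \ge \epsilon^2 \sqrt{\kappa}$ and $\phi_{\epsilon,\sigma}^2 \asymp_\nu \gamma_\kappa^{-2}$ one bounds the whole exponent by $C\nu \rho^4$, uniformly in $(\epsilon,\sigma)$. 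Choosing $\rho$ small makes $\chi^2 \le \delta^2$, and Lemma~\ref{lemLowerTesting} delivers the claim with $C_\ast = c_0 \rho$.

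The step I expect to be the main obstacle is the bookkeeping that links the $\sigma$-driven term in the definition of $\kgof$ — namely $\sigma^2 \max_{j\le\kappa}\alpha_j^{-2}\gamma_j^{-2}$ — to the lower-bound construction: the alternatives above only probe the $\epsilon$-channel, so on parameter regimes where $\phi_{\epsilon,\sigma}^2$ is governed by the $\sigma$-term one needs instead a perturbation of the eigenvalue sequence $\lambda$ (as in the proof of Theorem~\ref{thmLowerSigma}) rather than of $\sol$, i.e. a two-point or mixture argument in which $\sol = \solcirc$ is kept fixed but $\lambda_j$ is moved within $\cev$ so that the induced change in $X_j = \lambda_j \solcirc_j + \epsilon\xi_j$ is detectable only at scale $\sigma^2\alpha_j^{-2}\gamma_j^{-2}$; here the hypothesis that every $\solcirc_j \neq 0$ is exactly what makes such a perturbation visible in the $X$-channel. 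One then takes the worse of the two constructions. Reconciling both branches into a single clean statement, and verifying that the additional assumption $\phi_{\epsilon,\sigma}^4 \asymp_\nu \gamma_\kappa^{-4}$ is the precise condition under which the bias term $\gamma_\kappa^{-2}$ does not dominate (so that the rate is genuinely $\phi_{\epsilon,\sigma}^2$ and not larger), is the delicate part; the Gaussian $\chi^2$ computations themselves are routine and parallel to those already carried out in Section~\ref{secLower}.
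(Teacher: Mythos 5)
Your overall architecture --- splitting into an $\epsilon$-dominated and a $\sigma$-dominated branch, reducing each to Lemma~\ref{lemLowerTesting} via a mixture or two-point construction --- is the paper's, but both of your concrete constructions are flawed in ways that break the argument.

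On the $\epsilon$-branch, the perturbation $\sol^{(\varsigma)}_j-\solcirc_j=\varsigma_j\rho\,\gamma_\kappa^{-1}\gamma_j^{-1}$ is miscalibrated. The $\chi^2$-exponent is $\asymp\rho^4\epsilon^{-4}\gamma_\kappa^{-4}\sum_{j\le\kappa}\alpha_j^4\gamma_j^{-4}$, and since on this branch $\gamma_\kappa^{-4}\asymp_\nu\phi_{\epsilon,\sigma}^4=\epsilon^4\sum_{j\le\kappa}\alpha_j^{-4}$, already the $j=1$ term equals $\rho^4\gamma_\kappa^{-4}\epsilon^{-4}\asymp\rho^4\sum_{j\le\kappa}\alpha_j^{-4}\to\infty$: the low frequencies, where $\lambda_j$ is large, carry a perfectly detectable signal in $X$, and no constant choice of $\rho$ repairs this. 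Your inequality chain is also logically reversed: $\phi_{\epsilon,\sigma}^2\ge\epsilon^2\sqrt{\kappa}$ gives a \emph{lower} bound on $\epsilon^{-4}\gamma_\kappa^{-4}$, not an upper bound on the exponent. (The ellipsoid check is off as well: $\sum_j\gamma_j^2(\sol^{(\varsigma)}_j-\solcirc_j)^2=\rho^2\kappa\gamma_\kappa^{-2}$, not $\asymp\rho^2$.) The correct shape is the one from Theorems~\ref{thmLowerEps1} and~\ref{thmLowerSignalDetection}, namely $\sol^{(\varsigma)}_j-\solcirc_j=\varsigma_j\,c\,\epsilon\,\alpha_j^{-2}\big(\sum_{i\le\kappa}\alpha_i^{-4}\big)^{-1/4}$, for which $\lambda_j^4\beta_j^4/\epsilon^4\asymp\alpha_j^{-4}/\sum_{i\le\kappa}\alpha_i^{-4}$ sums to $O(1)$; the paper simply refers this case to the proof of Theorem~\ref{thmLowerSignalDetection}.

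On the $\sigma$-branch, the construction you sketch --- keep $\sol=\solcirc$ and move only $\lambda$ --- cannot work: the alternative would then satisfy $\Vert\sol-\solcirc\Vert_2=0$ and would not belong to $\setAlt(c\phi_{\epsilon,\sigma})$, since the testing problem separates hypotheses in $\sol$, not in $\lambda$. The mechanism (used both here and in Theorem~\ref{thmLowerSigma}, which you cite but misremember) is to perturb $\sol_\kappa$ and $\lambda_\kappa$ \emph{simultaneously} so that the product $\lambda_\kappa\sol_\kappa$ is unchanged: with $\kappa=\argmax_{j\in\llbracket 1,\kgof\rrbracket}\alpha_j^{-2}\gamma_j^{-2}$, take $\lambda^1_\kappa=(1+\ctilde\sigma\alpha_\kappa^{-1}\gamma_\kappa^{-1})\alpha_\kappa$, $\lambda^\circ_\kappa=(1-\ctilde\sigma\alpha_\kappa^{-1}\gamma_\kappa^{-1})\alpha_\kappa$ and $\sol^1_\kappa=\frac{1-\ctilde\sigma\alpha_\kappa^{-1}\gamma_\kappa^{-1}}{1+\ctilde\sigma\alpha_\kappa^{-1}\gamma_\kappa^{-1}}\,\solcirc_\kappa$. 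Then the $X$-marginals coincide, the Kullback--Leibler distance reduces to the $Y_\kappa$-marginal and is at most $2\ctilde^2$, while $\Vert\sol^1-\solcirc\Vert_2^2\gtrsim\ctilde^2(\solcirc_\kappa)^2\sigma^2\alpha_\kappa^{-2}\gamma_\kappa^{-2}=\ctilde^2(\solcirc_\kappa)^2\phi_{\epsilon,\sigma}^2$ --- this is exactly where $\solcirc_\kappa\neq0$ enters and what places $\sol^1$ in $\setAlt(c\phi_{\epsilon,\sigma})$. Statement~\ref{lemLowerTesting2} of Lemma~\ref{lemLowerTesting} with $\mu=\delta_{(\sol^1,\lambda^1)}$ then closes this branch.
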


Again, specializing the results of Theorems~\ref{thmUpperGoF} and~\ref{LowerGoF} with our standard illustrations, we obtain the minimax rates of testing for the goodness-of-fit testing problem for all the considered cases.
These are summarized in Table~\ref{tableGoF}.

{\centering
\begin{table}
\caption{Optimal minimax rates of testing for goodness-of-fit testing under the assumptions of Theorems~\ref{thmUpperGoF}  and \ref{LowerGoF}.}\label{tableGoF}
\renewcommand{\arraystretch}{1.8}
\footnotesize
\begin{tabularx}{14cm}{c!{\color{gray!25}\vrule}cc}
     & Sobolev class \color{Ivory4}(\boldmath$\gamma_j = j^p$) & Analytic class \color{Ivory4}(\boldmath$\gamma_j  \asymp e^{pj}$) \\
    \arrayrulecolor{gray!25}\hline
    Mildly ill-posed \boldmath(\color{Ivory4}$\alpha_j = j^{-a}$) & $\epsilon^{\frac{8p}{4a+4p+1}} \vee \sigma^{2} \vee \sigma^{\frac{2p}{a}}$ & $\epsilon^2 \vert \log \epsilon \vert^{2a+\frac{1}{2}} \vee \sigma^2$ \\    
    Severely ill-posed \color{Ivory4}(\boldmath$\alpha_j \asymp e^{-aj}$) & $\vert \log \epsilon \vert^{-2p} \vee \vert \log \sigma \vert^{-2p}$ & $\epsilon^{\frac{2p}{a+p}} \vee \sigma^{2} \vee \sigma^{\frac{2p}{a}}$ 
\end{tabularx}
\normalsize
\end{table}} \section{Conclusion and open questions}\label{secDiscussion}

We have considered the minimax optimal estimation of quadratic functionals in the Gaussian sequence model given by \eqref{eqObsX} and \eqref{eqObsY}, and applied our theoretical findings to testing problems.
In particular, we have derived the minimax rates of estimation and minimax rates of testing under mild assumptions that allow us to deal with the classical examples from the literature.
A next step for future research might be to transfer the methodology developed in this paper to deconvolution models with unknown error distribution~\cite{comte2011data,johannes2009deconvolution}.
Apart from that, the following problems have not been dealt with in this paper and might be worth of being more closely investigated:
\begin{itemize}
	\item The optimal estimator of the quadratic functionals is not completely data-driven, and the definition of an adaptive selection rule for the truncation parameter that satisfies some theoretical guarantees is necessary.
	\item Equally, the problem of adaptive testing has not been discussed. In particular, can standard techniques for adaptive testing in inverse problems as developed in~\cite{butucea2009adaptive} (see also~\cite{lacour2014goodness}) be transferred to the model with partially unknown operator, and what is the price one has to pay for adaption?  
\item The general matrix case given by observations
	\begin{equation*}
	  X = \Op \sol + \epsilon \xi \quad \text{and} \quad Y = \Op + \sigma \Xi
	\end{equation*}
	is still open. Note that results for this model might be of interest since it is related to inverse problems like non-parametric instrumental variable regression or functional linear regression where non-diagonal matrices appear in a natural manner.
	\item Finally, considering inverse problems with sparsity constraints as in~\cite{collier2017minimax} might be of interest.
\end{itemize} 
\appendix

\section{General tools for lower bounds}\label{appGenToolsLowerBounds}

\subsection{Reduction to comparison with a fuzzy hypothesis}

For a probability measure $\mu$ on $\csol$, we put $\Pb_\mu^X = \int_{\csol} \Pb_\theta^X \mu(\dd \theta)$.
The following lemma reduces the problem of establishing a minimax lower bound on the class $\csol$ to the problem of testing $\Pb_0^X$ ($\mu = \delta_0$) against some $\Pb_\mu^X$ with $\mu \neq \delta_0$.
It is a special case of Theorem~2.15 in~\cite{tsybakov2009introduction} (the formulation is mainly borrowed from \cite{collier2017minimax}, see Lemma~2 therein).

\begin{lemma}\label{lemLowerHypercube}
	Let $\Theta$ be a subset of $\ell^2(\Nast)$ containing $0$. Assume that there exists a probability measure $\mu$ on $\Theta$ and numbers $\psi > 0$, $\beta > 0$ such that $\qfunc(\sol) = 2 \psi$ for all $\sol \in \supp(\mu)$ and $\chi^2(\Pb_\mu^X, \Pb_0^X) \leq \beta$.
	Then,
	\begin{equation*}
		\inf_\qtilde \sup_{\sol \in \csol} \sup_{\ev \in \cev} \Pb_{(\sol,\ev)}( \vert \qtilde - \qfunc(\sol) \vert \geq \psi) \geq \frac{1}{4} \exp(-\beta)
	\end{equation*} 
	where the infimum is taken over all estimators $\qtilde$.
\end{lemma}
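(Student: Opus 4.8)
The plan is to run the standard reduction of a minimax estimation lower bound to a point-versus-mixture testing problem, which is exactly the mechanism behind Theorem~2.15 in~\cite{tsybakov2009introduction}; the reader may of course also simply quote that theorem, but I spell out the argument.

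First, I would freeze the nuisance at a reference value: pick any $\ev^\circ \in \cev$ (for instance $\ev^\circ = \alpha$, which lies in $\cev$ since $d \geq 1$), and only use the parameter pairs $(0,\ev^\circ)$ and $(\sol,\ev^\circ)$ with $\sol \in \supp(\mu)$, all of which lie in $\csol \times \cev$. To an arbitrary estimator $\qtilde$ based on $(X,Y)$ associate the test $T = \1\{ \qtilde \geq \psi \}$. Because $\qfunc(0) = 0$ we have $\{ T = 1 \} \subseteq \{ |\qtilde - \qfunc(0)| \geq \psi \}$, and because $\qfunc(\sol) = 2\psi$ for every $\sol \in \supp(\mu)$ we have $\{ T = 0 \} \subseteq \{ |\qtilde - \qfunc(\sol)| \geq \psi \}$. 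Hence
\begin{align*}
 \sup_{\sol \in \csol} \sup_{\ev \in \cev} \Pb_{(\sol,\ev)}( |\qtilde - \qfunc(\sol)| \geq \psi )
 &\geq \max\Big\{ \Pb_{(0,\ev^\circ)}(T=1),\ \sup_{\sol \in \supp(\mu)} \Pb_{(\sol,\ev^\circ)}(T=0) \Big\}\\
 &\geq \tfrac12 \Big( \Pb_{(0,\ev^\circ)}(T=1) + \int \Pb_{(\sol,\ev^\circ)}(T=0)\, \mu(\dd\sol) \Big),
\end{align*}
and since every $\{0,1\}$-valued function of $(X,Y)$ is of the form $\1\{\qtilde \ge \psi\}$ for some estimator $\qtilde$, taking the infimum over $\qtilde$ bounds the left-hand side below by $\tfrac12$ times the Bayes testing risk between $\Pb_{(0,\ev^\circ)}$ and the mixture $\int \Pb_{(\sol,\ev^\circ)}\, \mu(\dd\sol)$ for the joint observation $(X,Y)$.

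Second, I would discard the $Y$-sample. Under $\ev = \ev^\circ$ the law of $Y$ depends neither on $\sol$ nor on $\mu$, and $X$ and $Y$ are independent, so both the null and the mixture factorize as a law of $X$ times the common factor $\Pb^Y_{\ev^\circ}$. The Bayes testing risk equals the testing affinity $\int \min(\dd P, \dd Q)$ of the two laws, and this affinity is unchanged by a common product factor; it therefore collapses to $\int \min( \dd\Pb_0^X, \dd\Pb_\mu^X )$ with $\Pb_\mu^X = \int \Pb_\sol^X \mu(\dd\sol)$, exactly the objects appearing in the statement. Third, I would lower-bound this affinity by the hypothesis on the $\chi^2$-divergence: combining the Bretagnolle--Huber-type inequality $\int \min(\dd P, \dd Q) \geq \tfrac12 \exp(-\mathrm{KL}(P\|Q))$ with $\mathrm{KL}(P\|Q) \leq \log(1+\chi^2(P,Q)) \leq \chi^2(P,Q)$ (Jensen), applied to $P = \Pb_\mu^X$ and $Q = \Pb_0^X$, gives $\int \min( \dd\Pb_0^X, \dd\Pb_\mu^X ) \geq \tfrac12 \exp(-\chi^2(\Pb_\mu^X, \Pb_0^X)) \geq \tfrac12 e^{-\beta}$. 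Together with the factor $\tfrac12$ from the first step this yields the claimed $\tfrac14 e^{-\beta}$.

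The argument is essentially a repackaging of a textbook result, so there is no deep obstacle; the single point that genuinely needs care is the second step, namely making precise that enlarging the observation from $X$ to $(X,Y)$ cannot help the tester. This is what allows the $\chi^2$-hypothesis to be imposed on the $X$-marginal alone even though $\qtilde$ is permitted to depend on $Y$, and it hinges entirely on having frozen the nuisance $\ev$ at one and the same value $\ev^\circ$ under both hypotheses.
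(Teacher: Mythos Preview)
Your proposal is correct and follows exactly the route the paper takes: the paper simply cites Theorem~2.15 in~\cite{tsybakov2009introduction} (and Lemma~2 in~\cite{collier2017minimax}) without giving any argument, and you have written out that reduction in full. Your explicit handling of the $Y$-sample by freezing $\ev=\ev^\circ$ and using the product structure is a useful clarification that the paper leaves implicit.
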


\subsection{Reduction to two hypotheses}\label{ssecReduction2}

For the proofs of Theorems~\ref{thm:lower:epsilon:2} and~\ref{thmLowerSigma} we will construct hypotheses $(\sol^\tau,\ev^\tau) \in \csol \times \cev$ for $\tau \in \setpm$ such that the Kullback-Leibler distance between the resulting distributions $\Pb_1$ and $\Pb_{-1}$ of the tuple $(X,Y)$ is bounded by $1$.
This implies $\rho(\Pb_1, \Pb_{-1}) \geq 1/2$ for the Hellinger affinity being defined as $\rho(\Pb_1, \Pb_{-1}) = \sqrt{\dd \Pb_{1}\dd \Pb_{-1}}$.
Putting $\qfunc_\tau = \qfunc(\sol^\tau)$ for $\tau \in \{ \pm 1 \}$ we can conclude from
\begin{align*}
	\frac{1}{2} &\leq \int \frac{\abs{\qtilde - \qfunc_1}}{\abs{\qfunc_1 - \qfunc_{-1}}} \sqrt{\dd\Pb_{1}\dd\Pb_{-1}} + \int \frac{\abs{\qtilde - \qfunc_{-1}}}{\abs{\qfunc_1 - \qfunc_{-1}}} \sqrt{\dd \Pb_{1}\dd\Pb_{-1}}\\
	&\leq \bigg( \int \bigg( \frac{\qtilde - \qfunc_1}{\qfunc_1 - \qfunc_{-1}} \bigg)^2 \dd \Pb_1 \bigg)^{1/2} + \bigg( \int \bigg( \frac{\qtilde - \qfunc_{-1}}{\qfunc_1 - \qfunc_{-1}} \bigg)^2 \dd\Pb_{-1} \bigg)^{1/2}
\end{align*}
by using the elementary estimate $(a+b)^2 \leq 2a^2 + 2b^2$ that
\begin{align*}
	\frac{1}{8} \ (\qfunc_1 - \qfunc_{-1})^2 \leq \Eb_1 [( \qtilde - \qfunc_1 )^2] + \Eb_{-1} [( \qtilde - \qfunc_{-1} )^2].
\end{align*}
This last estimate in turn yields
\begin{align}
	\sup_{\sol \in \csol} \sup_{\ev \in \cev} \Eb [(\qtilde - \qfunc(\sol))^2] \geq \frac{1}{2} \sum_{\tau \in \setpm} \Eb_\tau [(\qtilde - \qfunc_\tau)^2] \geq \frac{1}{16} (\qfunc_1 - \qfunc_{-1})^2 \label{eqRedSch}
\end{align}
which establishes the quantity $\frac{1}{16} (\qfunc_1 - \qfunc_{-1})^2$ as a lower bound on the minimax rate.

\subsection{Reduction argument for lower bounds of testing}

\begin{lemma}\label{lemLowerTesting}
	Let $\mu$ be a probability measure on $\setAlt$.
	Then, the following statements hold true:
	\begin{enumerate}[(i)]
		\item $\inf_\Delta \left\lbrace \Pb_0(\Delta = 1) + \sup_{\sol \in \setAlt} \Pb_\sol(\Delta = 0) \right\rbrace \geq 1 - \sqrt{\chi^2(\Pb_\mu, \Pb_0)}$,\label{lemLowerTesting1}
		\item $\inf_\Delta \left\lbrace \Pb_0(\Delta = 1) + \sup_{\sol \in \setAlt} \Pb_\sol(\Delta = 0) \right\rbrace \geq 1 - \sqrt{\KL(\Pb_\mu, \Pb_0)/2}$.\label{lemLowerTesting2}
	\end{enumerate}
	In both statements, the infimum is taken over all $\{ 0,1 \}$-valued statistics.
\end{lemma}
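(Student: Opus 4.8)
The plan is to reduce both inequalities to the classical Le~Cam/Ingster two-point bound for testing the simple null $\Pb_0$ against the single ``Bayesian mixture'' alternative $\Pb_\mu \defeq \int_{\setAlt} \Pb_\sol\, \mu(\dd\sol)$, and then to control the distance between $\Pb_\mu$ and $\Pb_0$ by means of the $\chi^2$-divergence for part~\ref{lemLowerTesting1} and of the Kullback--Leibler divergence for part~\ref{lemLowerTesting2} (this is in the same spirit as the fuzzy-hypothesis reduction underlying Lemma~\ref{lemLowerHypercube}).

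First I would bound the maximal type~II error from below by its $\mu$-average. Since $\mu$ is a probability measure on $\setAlt$, for any $\{0,1\}$-valued statistic $\Delta$ one has
\begin{equation*}
	\sup_{\sol \in \setAlt} \Pb_\sol(\Delta = 0) \;\geq\; \int_{\setAlt} \Pb_\sol(\Delta = 0)\, \mu(\dd\sol) \;=\; \Pb_\mu(\Delta = 0),
\end{equation*}
the last equality being the definition of the mixture (together with Fubini's theorem, the map $\sol \mapsto \Pb_\sol(\Delta = 0)$ being measurable in the Gaussian sequence model). Hence the left-hand side of both statements is, for every $\Delta$, at least $\Pb_0(\Delta = 1) + \Pb_\mu(\Delta = 0)$. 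Writing $\Pb_0$ and $\Pb_\mu$ through densities $p_0, p_\mu$ with respect to a common dominating measure $\nu$, I would then use
\begin{equation*}
	\Pb_0(\Delta = 1) + \Pb_\mu(\Delta = 0) \;=\; 1 - \int \1_{\{\Delta = 1\}}\,(p_\mu - p_0)\,\dd\nu \;\geq\; 1 - \int (p_\mu - p_0)_+\,\dd\nu \;=\; 1 - \mathrm{TV}(\Pb_\mu, \Pb_0),
\end{equation*}
where $\mathrm{TV}$ is the total variation distance, so that it only remains to bound $\mathrm{TV}(\Pb_\mu, \Pb_0)$. For part~\ref{lemLowerTesting1}, Cauchy--Schwarz gives
\begin{equation*}
	\mathrm{TV}(\Pb_\mu, \Pb_0) = \tfrac12 \int \frac{\abs{p_\mu - p_0}}{\sqrt{p_0}}\,\sqrt{p_0}\,\dd\nu \;\leq\; \tfrac12 \Bigl( \int \frac{(p_\mu - p_0)^2}{p_0}\,\dd\nu \Bigr)^{1/2} = \tfrac12\sqrt{\chi^2(\Pb_\mu, \Pb_0)} \;\leq\; \sqrt{\chi^2(\Pb_\mu, \Pb_0)}
\end{equation*}
(the bound being vacuous unless $\Pb_\mu \ll \Pb_0$), while for part~\ref{lemLowerTesting2} Pinsker's inequality yields $\mathrm{TV}(\Pb_\mu, \Pb_0) \leq \sqrt{\KL(\Pb_\mu, \Pb_0)/2}$. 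Taking the infimum over $\Delta$ in the displayed lower bound and combining with these two estimates proves both claims.

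I do not expect a genuine obstacle here; the argument is entirely routine. The only points requiring a little care are the passage from the supremum to the $\mu$-average (one uses that $\mu$ is supported on $\setAlt$ and that $\sol \mapsto \Pb_\sol(\Delta = 0)$ is measurable, both immediate in this model) and keeping the normalisations in the definitions of $\mathrm{TV}$, $\chi^2$ and $\KL$ consistent. It is worth noting that part~\ref{lemLowerTesting1} as stated is not tight, since the Cauchy--Schwarz step actually produces the factor $1/2$ in front of $\sqrt{\chi^2(\Pb_\mu, \Pb_0)}$; the stated weaker form is nevertheless enough for all the applications in Sections~\ref{secLower} and~\ref{secTesting}.
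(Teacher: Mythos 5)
Your proof is correct and follows essentially the same route as the paper: bound the supremum of the type~II error by its $\mu$-average, deduce $\Pb_0(\Delta=1)+\Pb_\mu(\Delta=0)\geq 1-\TV(\Pb_\mu,\Pb_0)$, and then control the total variation by $\sqrt{\chi^2(\Pb_\mu,\Pb_0)}$ and by Pinsker's inequality. The only difference is cosmetic: the paper cites the divergence inequalities from Tsybakov's monograph, whereas you rederive the $\chi^2$ bound via Cauchy--Schwarz (correctly noting the sharper factor $1/2$).
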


\begin{proof}
	For any $\{ 0,1 \}$-valued statistic $\Delta$,
	\begin{align*}
		\Pb_0(\Delta = 1) + \sup_{\sol \in \setAlt} \Pb_\sol(\Delta = 0) &\geq \Pb_0(\Delta = 1) + \int_{\setAlt} \Pb_\sol (\Delta = 0) \mu(\dd \sol)\\
		&= \Pb_0(\Delta = 1) + \Pb_\mu (\Delta = 0)\\
		&\geq 1- \TV(\Pb_\mu, \Pb_0).
	\end{align*}
	Therefrom, Statement~\ref{lemLowerTesting1} follows using Equation~(2.27) in~\cite{tsybakov2009introduction}, and Statement~\ref{lemLowerTesting2} by the first Pinsker inequality (see~\cite{tsybakov2009introduction}, Lemma~2.5).
\end{proof} \section{Upper bounds for the terms $\Eb \Tc_{ki}^2$ in the proof of Theorem~\ref{thmUpper}}\label{appUpper}

\paragraph{\emph{Upper bound for $\Eb \Tc_{k1}^2$}} By independence of $U_j$ and $\1_{\Omega_j}/V_j$ and $\Eb[U_j - \lambda_j^2 (\sol_j - \solcirc_j)^2]=0$, it holds
	\begin{align*}
		\Eb \left[ \left( \sum_{j=1}^{k} \omega_j^2 \, \frac{U_j - \lambda_j^2 (\sol_j - \solcirc_j)^2}{V_j} \1_{\Omega_j} \right)^2 \right]  &= \var \left( \sum_{j=1}^{k} \omega_j^2 \, \frac{U_j - \lambda_j^2 (\sol_j - \solcirc_j)^2}{V_j} \1_{\Omega_j} \right)\\
		&=\sum_{j=1}^{k} \omega_j^4 \var \left( \frac{U_j - \lambda_j^2 (\sol_j - \solcirc_j)^2}{V_j} \1_{\Omega_j} \right). 
	\end{align*}
	Set $Z_1 = (U_j - \lambda_j^2 (\sol_j - \solcirc_j)^2)/\lambda_j^2$ and $Z_2 = \lambda_j^2/V_j \cdot \1_{\Omega_j}$.
	Note that $Z_1$ and $Z_2$ are independent, and since $\Eb Z_1 = 0$, the identity $\var(Z_1Z_2) = \var(Z_1)\var(Z_2) + \var(Z_1)(\Eb Z_2)^2 + \var(Z_2)(\Eb Z_1)^2$ reduces to $\var(Z_1Z_2) = \var(Z_1) \Eb (Z_2^2)$.
	Hence,
	\begin{align*}
		\var \left( \frac{U_j - \lambda_j^2 (\sol_j - \solcirc_j)^2}{V_j} \1_{\Omega_j} \right) &= \var(Z_1) \Eb [Z_2^2] \\
		&= \var \left( \frac{U_j - \lambda_j^2(\sol_j - \solcirc_j)^2}{\lambda_j^2} \right) \cdot \Eb \left[ \frac{\lambda_j^4}{V_j^2} \1_{\Omega_j} \right]\\
		&\leq 168\lambda_j^{-4} \var (U_j)
	\end{align*}
	where we have used Statement~\ref{UpperAux1} from Proposition~\ref{UpperAux}.
	Now, since $\var(U_j) = 2(\epsilon^2 + 2 \sigma^2 (\solcirc_j)^2)^2 + 4(\epsilon^2 + 2 \sigma^2 (\solcirc_j)^2) \lambda_j^2 (\sol_j - \solcirc_j)^2$, we obtain using $(a+b)^2 \leq 2a^2+2b^2$ that
	\begin{align*}
		\var \left( \frac{U_j - \lambda_j^2 (\sol_j - \solcirc_j)^2}{V_j} \1_{\Omega_j} \right)&\\
		&\hspace{-3em}\leq 672 \epsilon^4 \lambda_j^{-4} + 2688 \sigma^4 (\solcirc_j)^4 \lambda_j^{-4} + 672 (\epsilon^2 + 2 \sigma^2 (\solcirc_j)^2) \lambda_j^{-2} (\sol_j - \solcirc_j)^2.
	\end{align*}
	Now summation over all indices $j \in \llbracket 1, k \rrbracket$ yields
	\begin{align*}
		\Eb \Tc_{k1}^2 &\leq 672 \epsilon^4 \sum_{j=1}^k \omega_j^4 \lambda_j^{-4} + 2688 \sigma^4 \sum_{j=1}^k \omega_j^4\lambda_j^{-4} (\solcirc_j)^4 \\
		&\hspace{1em}+ 672\epsilon^2 \sum_{j=1}^k \omega_j^4 \lambda_j^{-2} (\sol_j - \solcirc_j)^2 + 1344 \sigma^2 \sum_{j=1}^k \omega_j^4 \lambda_j^{-2} (\solcirc_j)^2 (\sol_j - \solcirc_j)^2\\
		&\leq 672d^4 \epsilon^4 \sum_{j=1}^k \omega_j^4 \alpha_j^{-4} + 2688d^4 \sigma^4 \sum_{j=1}^k \omega_j^4\alpha_j^{-4} (\solcirc_j)^4 \\
		&\hspace{1em}+ 672d^2\epsilon^2 \sum_{j=1}^k \omega_j^4 \alpha_j^{-2} (\sol_j - \solcirc_j)^2 + 1344d^2 \sigma^2 \sum_{j=1}^k \omega_j^4 \alpha_j^{-2} (\solcirc_j)^2 (\sol_j - \solcirc_j)^2.
	\end{align*}

	\paragraph{\emph{Upper bound for $\Eb \Tc_{k2}^2$}} Using the Cauchy-Schwarz inequality, it holds
	\begin{align*}
	\Eb \Tc_{2k}^2 &= \Eb \left[ \left( \sum_{j=1}^{k} \omega_j^2\lambda_j^2 (\sol_j- \solcirc_j)^2 \left( \frac{1}{V_j} - \frac{1}{\lambda_j^2} \right) \1_{ \Omega_j } \right)^2 \right]\\
	&\leq \Eb \left[ \left( \sum_{j=1}^{k} \gamma_j^2(\sol_j - \solcirc_j)^2  \right) \left( \sum_{j=1}^{k} \omega_j^4\gamma_j^{-2} (\sol_j - \solcirc_j)^2  \left( \frac{\lambda_j^2}{V_j} -1 \right)^2 \1_{ \Omega_j } \right) \right]\\
	&\leq 4L^2 \sum_{j=1}^{k} \omega_j^4 (\sol_j - \solcirc_j)^2 \gamma_j^{-2} \Eb \left[ \left( \frac{\lambda_j^2}{V_j} - 1 \right)^2 \1_{\Omega_j} \right]\\
	&\leq C(d)L^2 \sum_{j=1}^{k} \omega_j^4\gamma_j^{-2} (\sigma^4 \alpha_j^{-4} + \sigma^2 \alpha_j^{-2}) (\sol_j - \solcirc_j)^2 
	\end{align*}
	where the last estimate is due to Statement~\ref{UpperAux2} from Proposition~\ref{UpperAux}.
	\paragraph{\emph{Upper bound for $\Eb \Tc_{k3}^2$}} 
	Again by the Cauchy-Schwarz inequality we have
	\begin{align*}
	\Eb \left[ \left( \sum_{j=1}^{k} \omega_j^2 (\sol_j-\solcirc_j)^2 \1_{ \Omega_j^\complement } \right)^2 \right] &\leq \Eb \left[ \left( \sum_{j=1}^{k} \gamma_j^2(\sol_j - \solcirc_j)^2  \right) \left( \sum_{j=1}^{k} \omega_j^4 \gamma_j^{-2}(\sol_j - \solcirc_j)^2  \1_{\Omega_j^\complement} \right) \right]\\
	&\leq 4L^2 \sum_{j=1}^{k} \omega_j^4\gamma_j^{-2} (\sol_j - \solcirc_j)^2  \Pb (\Omega_j^\complement).
	\end{align*}
	Bounding the probability of the event $\Omega_j^\complement$ by means of Statement~\ref{UpperAux3} in Proposition~\ref{UpperAux}, we conclude
	\begin{equation*}
	\Eb \Tc_{k3}^2 \leq 48d^2L^2 \sum_{j=1}^{k} \omega_j^4\gamma_j^{-2} (\sol_j - \solcirc_j)^2  \min \{ 1,\sigma^2 \alpha_j^{-2} \}.
	\end{equation*}
	
	\paragraph{\emph{Upper bound for $\Eb \Tc_{k4}^2$}} Note that $\Tc_{k4}^2$ is deterministic. Hence,
	\begin{equation*}
		\Eb \Tc_{k4}^2 = \Tc_{k4}^2 = \left( \sum_{j > k} \omega_j^2 (\sol_j - \solcirc_j)^2 \right)^2 \leq \frac{\omega_k^4}{\gamma_k^4} \left( \sum_{j>k} \gamma_j^2 (\sol_j - \solcirc_j)^2 \right)^2 \leq 16L^4 \cdot \frac{\omega_k^4}{\gamma_k^4}.
	\end{equation*} \section{Auxiliary results for the Proof of Theorem~\ref{thmUpper}}\label{app:aux:upper}

\begin{proposition}\label{UpperAux}
	With the notations introduced in the main part of the paper the following assertions hold true:
	\begin{enumerate}[(i)]
		\item\label{UpperAux1} $\Eb \left[ \frac{\lambda_j^4}{V_j^2}  \1_{\Omega_j} \right] \leq 168$,
		\item\label{UpperAux2} $\Eb \left[ \left( \frac{\lambda_j^2}{V_j}  - 1 \right)^2 \1_{\Omega_j} \right] \leq C(d) \sigma^4 \alpha_j^{-4} + C(d)\sigma^2 \alpha_j^{-2}$,
		\item\label{UpperAux3} $\Pb (\Omega_j^\complement) \leq 12d^2 \min (1,\sigma^2 \alpha_j^{-2})$.
	\end{enumerate}
\end{proposition}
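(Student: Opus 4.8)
The plan is to reduce all three estimates to elementary Gaussian tail bookkeeping around one deterministic observation. First I would write $\Ypp_j = \lambda_j + \sqrt2\,\sigma\zeta$ with $\zeta\sim\Nc(0,1)$ (since $\Ypp_j\sim\Nc(\lambda_j,2\sigma^2)$) and, since $\Ypp_j^2$ depends on $\lambda_j$ only through $|\lambda_j|$, assume $\lambda_j>0$. The key point is that on $\Omega_j=\{\Ypp_j^2\ge 3\sigma^2\}$ one has $2\sigma^2\le\tfrac23\Ypp_j^2$, hence
\[
  V_j=\Ypp_j^2-2\sigma^2\ \in\ \bigl[\sigma^2,\Ypp_j^2\bigr]\qquad\text{and}\qquad V_j\ge\tfrac13\Ypp_j^2\quad\text{on }\Omega_j .
\]
I would then use repeatedly the bound $\Pb(|\zeta|>x)\le e^{-x^2/2}$, the inequalities $e^{-t}\le t^{-1}$ and $\sup_{t>0}t^m e^{-ct}<\infty$, and the membership $(\sol,\lambda)\in\csol\times\cev$, which gives $d^{-1}\alpha_j\le\lambda_j\le d\alpha_j$; in particular, whenever $\lambda_j^2<12\sigma^2$ one has $\sigma^2\alpha_j^{-2}\ge(d^2\lambda_j^2)^{-1}\sigma^2>(12d^2)^{-1}$, so any absolute constant is automatically $\le C(d)\,\sigma^2\alpha_j^{-2}$ in that regime.

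For (iii) I would split on the size of $\lambda_j$. If $\lambda_j^2<12\sigma^2$ the preceding remark gives $12d^2\min\{1,\sigma^2\alpha_j^{-2}\}\ge1\ge\Pb(\Omega_j^\complement)$. If $\lambda_j\ge 2\sqrt3\,\sigma$, then $\Omega_j^\complement=\{|\Ypp_j|<\sqrt3\,\sigma\}$ forces $\sqrt2\,\sigma|\zeta|>\lambda_j-\sqrt3\,\sigma\ge\lambda_j/2$, so $\Pb(\Omega_j^\complement)\le e^{-\lambda_j^2/(16\sigma^2)}$; combining $te^{-t/16}\le 12$ for $t\ge12$ (the maximum being at $t=16$, of value $16/e<12$) with $\lambda_j\ge d^{-1}\alpha_j$ yields $\Pb(\Omega_j^\complement)\le 12d^2\sigma^2\alpha_j^{-2}$, and the $\min$ with $1$ is then free.

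For (ii) I would start from $\lambda_j^2-V_j=-2\sqrt2\,\sigma\lambda_j\zeta+2\sigma^2(1-\zeta^2)$, so that $(\lambda_j^2/V_j-1)^2\le V_j^{-2}\bigl(16\sigma^2\lambda_j^2\zeta^2+8\sigma^4(1-\zeta^2)^2\bigr)$. In the regime $\lambda_j^2<12\sigma^2$ the crude bound $\lambda_j^2/V_j\le\lambda_j^2/\sigma^2<12$ on $\Omega_j$ makes the left side $\le121$, absorbed as above. In the regime $\lambda_j^2\ge12\sigma^2$ I would introduce the event $G_j=\{\sqrt2\,\sigma|\zeta|\le\lambda_j/2\}$: on $G_j$ one has $G_j\subseteq\Omega_j$ and $V_j\ge\lambda_j^2/4-2\sigma^2\ge\lambda_j^2/12$, so substituting $V_j^{-2}\le144\lambda_j^{-4}$ and taking expectations ($\Eb\zeta^2=1$, $\Eb(1-\zeta^2)^2=2$) gives $\Eb[(\lambda_j^2/V_j-1)^2\1_{G_j}]\lesssim\sigma^2\lambda_j^{-2}+\sigma^4\lambda_j^{-4}\lesssim d^2\sigma^2\alpha_j^{-2}+d^4\sigma^4\alpha_j^{-4}$; on $G_j^\complement$, using only $V_j\ge\sigma^2$ on $\Omega_j$ gives $(\lambda_j^2/V_j-1)^2\le(\lambda_j^2/\sigma^2)^2$ and $\Pb(G_j^\complement)\le e^{-\lambda_j^2/(16\sigma^2)}$, whence---because $t^2e^{-t/16}\lesssim t^{-1}$---this contribution is again $\lesssim d^2\sigma^2\alpha_j^{-2}$. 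Summing the pieces proves (ii).

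For (i) I would reuse the same dichotomy: for $\lambda_j^2<12\sigma^2$ one has $\lambda_j^4/V_j^2\le(\lambda_j^2/\sigma^2)^2<144$ on $\Omega_j$ directly, and for $\lambda_j^2\ge12\sigma^2$ the bound $V_j\ge\lambda_j^2/12$ on $G_j$ gives $\lambda_j^4/V_j^2\le144$. The only delicate contribution is $\Omega_j\cap G_j^\complement$: the crude estimate $\lambda_j^4/V_j^2\le(\lambda_j^2/\sigma^2)^2$ there only yields the non-sharp constant $\sup_{t>0}t^2e^{-t/16}=1024e^{-2}\approx139$, so to reach the stated $168$ I would instead partition $\Omega_j\cap G_j^\complement$ dyadically in the size of $\Ypp_j^2$ relative to $\lambda_j^2$---on $\{|\Ypp_j|\asymp2^{-m}\lambda_j\}$ one has $\lambda_j^4/V_j^2\lesssim2^{4m}$ while the event forces $|\zeta|\ge(1-2^{-m})\lambda_j/(\sqrt2\,\sigma)$, hence probability $\le\exp(-(1-2^{-m})^2\lambda_j^2/(4\sigma^2))\le\exp(-(1-2^{-m})^2\cdot 3\cdot4^m/4)$---and sum the resulting rapidly decaying geometric-type series over $m\ge1$, which contributes only $O(1)$. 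This last estimate of a single Gaussian integral is the main (purely computational) obstacle; conceptually the whole proposition is bookkeeping around the deterministic facts $V_j\ge\sigma^2$ and $V_j\ge\Ypp_j^2/3$ on $\Omega_j$.
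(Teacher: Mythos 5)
Your argument is sound and, modulo the explicit constant in part (i), proves the proposition; but it takes a genuinely different route from the paper in each part, so let me compare. For \ref{UpperAux3} you use the Gaussian tail $\Pb(|\zeta|>x)\le e^{-x^2/2}$ where the paper simply applies Chebyshev's inequality to $\Ypp_j/\lambda_j$ in the regime $\lambda_j^2\ge 12\sigma^2$, obtaining $8\sigma^2\lambda_j^{-2}$; both work, and your observation that $\lambda_j^2<12\sigma^2$ makes the bound trivial is exactly the paper's. For \ref{UpperAux2} your event $G_j$ plays the role of the paper's $\mho_j=\{|1/\Ypp_j-1/\lambda_j|\le 1/(2|\lambda_j|)\}$; on the complement the paper uses Cauchy--Schwarz together with the explicit fourth moment $\Eb[(\lambda_j^2-V_j)^4]$, whereas you use the deterministic bound $(\lambda_j^2/V_j-1)^2\le(\lambda_j^2/\sigma^2)^2$, valid on $\Omega_j$, times $\Pb(G_j^\complement)$ --- yours is the lighter computation, and since \ref{UpperAux2} only claims a constant $C(d)$ the difference is immaterial. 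The real divergence is \ref{UpperAux1}, where the paper needs no case analysis at all: from your own inequality $V_j\ge\Ypp_j^2/3$ on $\Omega_j$ (equivalently, monotonicity of $x\mapsto x/(x-2\sigma^2)$ on $[3\sigma^2,\infty)$) one gets $\Ypp_j^4/V_j^2\1_{\Omega_j}\le 9$, and then
\begin{equation*}
\Eb\Bigl[\tfrac{\lambda_j^4}{V_j^2}\1_{\Omega_j}\Bigr]\le 9\,\Eb\Bigl[\tfrac{\lambda_j^4}{\Ypp_j^4}\1_{\Omega_j}\Bigr]\le 72\,\Eb\Bigl[\tfrac{(\lambda_j-\Ypp_j)^4}{9\sigma^4}\Bigr]+72=96+72=168,
\end{equation*}
using $(a+b)^4\le 8a^4+8b^4$, $\Ypp_j^4\ge 9\sigma^4$ on $\Omega_j$, and $\Eb(\lambda_j-\Ypp_j)^4=12\sigma^4$. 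Your dichotomy gives roughly $144+\sup_{t\ge 12}t^2e^{-t/16}\approx 283$, and the dyadic repair you sketch does not self-evidently close the gap: already on the first shell $\{|\Ypp_j|\in(\lambda_j/4,\lambda_j/2]\}$ the product of your pointwise bound $\lambda_j^4/V_j^2\le 9\cdot 4^4$ with the tail probability $e^{-3/4}$ exceeds $168$ by itself, so one would have to argue more carefully that the shells overlap-count correctly against the $G_j$ contribution. This costs you only the numerical value --- any absolute constant suffices for Theorem~\ref{thmUpper}, where $168$ is absorbed into $C(d)$ --- but if you want the stated $168$, the paper's two-line argument above is the way to get it.
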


\begin{proof}
	We begin the proof of \ref{UpperAux1} with the observation that, since the function $x \mapsto \frac{x}{x-2\sigma^2}$ is non-increasing on $[3\sigma^2, \infty)$, 
	\begin{equation}\label{eqPropDiagUpperAux}
		\frac{\Ypp_j^4}{V_j^2} \1_{\Omega_j}  = \left( \frac{\Ypp_j^2}{V_j} \right)^2 \1_{\Omega_j} = \left( \frac{\Ypp_j^2}{\Ypp_j^2 - 2\sigma^2} \right)^2 \1_{\Omega_j} \leq \left( \frac{3\sigma^2}{\sigma^2} \right)^2 \leq 9.
	\end{equation}
	Therefrom, using $(a+b)^4 \leq 8a^4 + 8b^4$
	\begin{align*}
		\Eb \left[ \frac{\lambda_j^4}{V_j^2} \1_{\Omega_j} \right] &\leq \Eb \left[ \frac{\lambda_j^4}{\Ypp_j^4} \cdot \frac{\Ypp_j^4}{V_j^2}  \1_{\Omega_j} \right] \leq 9 \Eb \left[ \frac{\lambda_j^4}{\Ypp_j^4} \1_{\Omega_j} \right]\\
		&\leq 9 \Eb \left[  \frac{(\lambda_j - \Ypp_j + \Ypp_j)^4}{\Ypp_j^4} \1_{\Omega_j} \right]\\
		&\leq 72 \Eb \left[ \frac{(\lambda_j -  \Ypp_j)^4}{9\sigma^4} \right] + 72 \leq 96 + 72 = 168. 
	\end{align*}
	In order to show \ref{UpperAux2}, introduce the event $\mho_j \defeq \left\lbrace \left| \frac{1}{\Ypp_j} - \frac{1}{\lambda_j} \right| \leq \frac{1}{2 \abs{\lambda_j}}  \right\rbrace$.
	Then, trivially,
	\begin{equation}\label{eqPropDiagUpperAux:2}
		\Eb \left[ \left( \frac{\lambda_j^2}{V_j}  - 1 \right)^2 \1_{\Omega_j} \right] = \Eb \left[ \left( \frac{\lambda_j^2}{V_j}  - 1 \right)^2 \1_{\Omega_j} (\1_{\mho_j} + \1_{\mho_j^\complement}) \right], 
	\end{equation}
	and we consider the summands with $\1_{\mho_j}$ and $\1_{\mho_j^\complement}$ separately.
	First, using~\eqref{eqPropDiagUpperAux},
	\begin{equation*}
		\Eb \left[ \left( \frac{\lambda_j^2}{V_j} - 1 \right)^2 \1_{\Omega_j} \1_{\mho_j} \right] = \Eb \left[ \frac{(\lambda_j^2 - V_j)^2}{\Ypp_j^4} \cdot \frac{\Ypp_j^4}{V_j^2} \cdot \1_{\Omega_j} \1_{\mho_j} \right] \leq 9 \Eb \left[ \frac{(\lambda_j^2 - V_j)^2}{\Ypp_j^4} \cdot \1_{ \mho_j } \right],
	\end{equation*}
	and since the definition of $\mho_j$ implies that $\Ypp_j^{-4} \leq \frac{81}{16}\lambda_j^{-4} \leq \frac{81}{16} d^4 \alpha_j^{-4}$, we have
	\begin{align*}
		\Eb \left[ \left( \frac{\lambda_j^2}{V_j} - 1 \right)^2 \1_{\Omega_j} \1_{\mho_j} \right] \leq \frac{729}{16} d^4 \alpha_j^{-4} \Eb [ (\lambda_j^2 - V_j)^2 ] &= \frac{729}{16} d^4 \alpha_j^{-4} (8 \sigma^4 + 8\sigma^2\lambda_j^2)\\
		&\leq \frac{729}{2} d^4 \sigma^4 \alpha_j^{-4} + \frac{729}{2} d^6 \sigma^2 \alpha_j^{-2}.
	\end{align*}
	We now turn to the summand with $\1_{\mho_j^\complement}$.
	First by the Cauchy-Schwarz inequality,
	\begin{align*}
		\Eb \left[ \left( \frac{\lambda_j^2}{V_j} - 1 \right)^2 \1_{\Omega_j} \1_{\mho_j^\complement} \right] &\leq \left( \Eb \left[ \left( \frac{\lambda_j^2}{V_j} - 1 \right)^4 \1_{\Omega_j} \right] \right)^{1/2} \cdot \Pb(\mho_j^\complement)^{1/2}\\
		&\leq \sigma^{-4} \cdot [\Eb (\lambda_j^2 - V_j)^4]^{1/2} \cdot \Pb(\mho_j^\complement)^{1/2}.
	\end{align*}
	Now, simple but exhausting calculations show that $\Eb [(\lambda_j^2 - V_j)^4] = 196\lambda_j^4\sigma^4 + 1920\lambda_j^2 \sigma^6 + 960 \sigma^8$.
	Thus, using the estimate $\sqrt{a+b+c} \leq \sqrt a + \sqrt b + \sqrt c$ for $a,b,c \geq 0$, we obtain
	\begin{align*}
		\Eb \left[ \left( \frac{\lambda_j^2}{V_j} - 1 \right)^2 \1_{\Omega_j} \1_{\mho_j^\complement} \right] &\leq \sigma^{-4} ( \sqrt{196} \lambda_j^2 \sigma^2 + \sqrt{1920} \lambda_j \sigma^3 + \sqrt{960} \sigma^4 ) \Pb(\mho_j^\complement)^{1/2}\\
		&\leq ( \sqrt{196}d^2 \alpha_j^2 \sigma^{-2} + \sqrt{1920} d\alpha_j \sigma^{-1} + \sqrt{960} ) \Pb(\mho_j^\complement)^{1/2}.
	\end{align*}
	By definition, $\mho_j^\complement = \{ \vert \lambda_j/\Ypp_j - 1 \vert > 1/2 \}$, which implies that $\lambda_j/\Ypp_j \notin [1/2,3/2]$ on $\mho_j^\complement$.
	Hence $\Ypp_j/\lambda_j \notin [2/3,2]$ on $\mho_j^\complement$ showing the inclusion $\mho_j^\complement \subseteq \{ \vert \Ypp_j/\lambda_j - 1 \vert > 1/3 \} = \{ \vert \Ypp_j - \lambda_j \vert > \vert \lambda_j \vert / 3 \}$, and hence
	\begin{equation*}
	 \Pb(\mho_j^\complement) \leq 2 \exp(-\lambda_j^2/(36\sigma^2)) \leq 2 \exp(-\alpha_j^2/(36d^2\sigma^2)).
	\end{equation*}
	We obtain
	\begin{align*}
		\Eb \left[ \left( \frac{\lambda_j^2}{V_j} - 1 \right)^2 \1_{\Omega_j} \1_{\mho_j^\complement} \right]&\\
		&\hspace{-6em}\leq (\sqrt{392}d^2 \alpha_j^2 \sigma^{-2} + \sqrt{3840} d\alpha_j \sigma^{-1} + \sqrt{1920}) \exp(-\alpha_j^2/(72d^2\sigma^2)).
	\end{align*}
	It is easy to see that there are constants $C_1(d), C_2(d)$ and $C_3(d)$ such that
	\begin{align*}
		&\alpha_j^2 \sigma^{-2} \exp(-\alpha_j^2/(72d^2\sigma^2)) \leq C_1(d) \sigma^2 \alpha_j^{-2},\\
		&\alpha_j \sigma^{-1} \exp (- \alpha_j^2/(72d^2\sigma^2)) \leq C_2(d)\sigma^2 \alpha_j^{-2},\\
		&\exp(-\alpha_j^2/(72d^2\sigma^2)) \leq C_3(d) \sigma^2 \alpha_j^{-2},
	\end{align*}
	and thus
	\begin{equation*}
		\Eb \left[ \left( \frac{\lambda_j^2}{V_j} - 1 \right)^2 \1_{\Omega_j} \1_{\mho_j^\complement} \right] \leq C(d) \sigma^2 \alpha_j^{-2}.
	\end{equation*}
	Now, combining the derived bounds for the two terms on the right hand-side of \eqref{eqPropDiagUpperAux:2} implies the claim assertion.
	For the proof of \ref{UpperAux3}, we consider first the case that $\lambda_j^2 \geq 12\sigma^2$.
	Then, by Chebyshev's inequality,
	\begin{equation*}
		 \Pb (\Omega_j^\complement) \leq \Pb \left( \frac{\Ypp_j^2}{\lambda_j^2} < \frac{1}{4} \right) \leq \Pb \left( \left| \frac{\Ypp_j}{\lambda_j} - 1 \right|  > \frac{1}{2} \right) \leq 8\sigma^2\lambda_j^{-2} \leq 8d^2\sigma^2 \alpha_j^{-2}.
	\end{equation*}
	In case that $\lambda_j^2 \leq 12\sigma^2$, we have $1 \leq 12d^2\sigma^2 \alpha_j^{-2}$ and $\Pb(\Omega_j^\complement) \leq 12d^2\sigma^2 \alpha_j^{-2}$ holds trivially.
	Combining the two considered cases implies the claim assertion.
\end{proof} \section{Calculations of rates}\label{appCalcRates}

We sketch the calculations leading to the rates in Table~\ref{tableRatesQuadFun}.
Recall that $\omega_j \equiv 1$ for all these examples.

\begin{itemize}
 \item 1. Case: $\gamma_j = j^p$, $\alpha_j=j^{-a}$
 
 It holds $\epsilon^4 \sum_{j=1}^k \alpha_j^{-4} = \epsilon^4 \sum_{j=1}^k j^{4a} \asymp \epsilon^4 k^{4a+1}$, and
 \begin{equation*}
  \epsilon^2 \max_{j \in \llbracket 1, k \rrbracket} \alpha_j^{-2} \gamma_j^{-2} = \epsilon^2 \max_{j \in \llbracket 1, k \rrbracket} j^{2a-2p} = \begin{cases} \epsilon^2, & \text{ if }p \geq a,\\ \epsilon^2 k^{2a-2p}, & \text{ if } p < a. \end{cases}
 \end{equation*}
  Thus, $\kepsilon \asymp \epsilon^{-4/(4a+4p+1)} \wedge \epsilon^{-1/(2p)}$.
  Similarly,
  \begin{equation*}
  \sigma^2 \max_{j \in \llbracket 1, k \rrbracket} \alpha_j^{-2} \gamma_j^{-4} = \sigma^2 \max_{j \in \llbracket 1, k \rrbracket} j^{2a-4p} = \begin{cases} \sigma^2, & \text{ if }2p \geq a,\\ \sigma^2 k^{2a-4p}, & \text{ if } 2p < a. \end{cases}
  \end{equation*}
  Hence, $\ksigma \asymp \sigma^{-1/a} \wedge \sigma^{-1/(2p)}$.
  The rate resulting from these values of $\kepsilon$ and $\ksigma$ is
  \begin{equation*}
	\epsilon^{16p/(4a+4p+1)} \wedge \epsilon^2 \wedge \sigma^{4p/a} \wedge \sigma^2.
  \end{equation*}
  
  \item 2. Case: $\gamma_j \asymp \exp(pj)$, $\alpha_j=j^{-a}$
  
  As in the previous case we have $\epsilon^4 \sum_{j=1}^k \alpha_j^{-4} \asymp \epsilon^4 k^{4a+1}$, but now
  \begin{equation*}
   \epsilon^2 \max_{j \in \llbracket 1, k \rrbracket} \alpha_j^{-2} \gamma_j^{-2} \lesssim \epsilon^2.
  \end{equation*}
  Balancing the approximation error $\gamma_k^{-4}$ and $\epsilon^2$ results in choosing $\kepsilon = \lfloor \lvert \log \epsilon \rvert /(2p) \rfloor$ which in turn leads to the parametric rate $\epsilon^2$.
  Analogously, $\ksigma = \lfloor \lvert \log \sigma \rvert /(2p) \rfloor$, likewise implying the parametric rate $\sigma^2$.
  \item 3. Case: $\gamma_j = j^p$, $\alpha_j \asymp \exp(-aj)$
  
  In this case, $\epsilon^4 \sum_{j=1}^k \alpha_j^{-4} \asymp \epsilon^4 \sum_{j=1}^k \exp(4ja) \asymp \epsilon^4 \exp(4ka)$, and balancing this expression with the approximation error $\gamma_k^{-4}$ leads to a choice of $\kepsilon$ of order $\lvert \log \epsilon \rvert$.
  Plugging this choice into the approximation error leads, with respect to $\epsilon$, to the rate $\lvert \log \epsilon \rvert^{-4p}$.
  Moreover, in the case at hand,
  \begin{equation*}
   \sigma^2 \max_{j \in \llbracket 1,k\rrbracket} \alpha_j^{-2} \gamma_j^{-4} \lesssim \sigma^2 \exp(2ka) \cdot k^{-4p}
  \end{equation*}
  which implies analogously to the choice of $\kepsilon$ a choice of $\ksigma$ of order $\lvert \log \sigma \rvert$.
  Hence, the rate with respect to $\sigma$ is $\lvert \log \sigma \rvert^{-4p}$.
  Note that the exact knowledge of $p$ and $a$ is not necessary in this case, since it suffices to choose $\kepsilon \asymp \lvert \log \epsilon \rvert$ and $\ksigma \asymp \lvert \log \sigma \rvert$ (however, one has to know that the unknown solution belongs to a Sobolev class and that the inverse problem is severely ill-posed).

  \item 4. Case: $\gamma_j \asymp \exp(pj)$, $\alpha_j \asymp \exp(-aj)$
  
  We have
  \begin{align*}
   \epsilon^2 \max_{j \in \llbracket 1,k\rrbracket} \alpha_j^{-2} \gamma_j^{-2} &\asymp \epsilon^2 \max_{j \in \llbracket 1,k\rrbracket} \exp(2(a-p)j)\\
   &= \begin{cases}
       \epsilon^2, & \text{ if } p \geq a,\\
       \epsilon^2 \exp(2(a-p)k), & \text{ if } p < a.
   \end{cases}
  \end{align*}
  Hence, $\kepsilon = \lfloor \lvert \log \epsilon \rvert /(a+p) \rfloor \wedge \lfloor \lvert \log \epsilon \rvert /(2p) \rfloor$ (the second choice would equally originate from balancing the term $\epsilon^4 \sum_{j=1}^k \alpha_j^{-4}$ with the approximation term $\gamma_k^{-4}$).
  Thus, the resulting rate in terms of $\epsilon$ is $\epsilon^{4p/(p+a)} \vee \epsilon^2$.
  Concerning the rate in terms of $\sigma^2$ for this case, note first that
  \begin{align*}
   \sigma^2 \max_{j \in \llbracket 1,k\rrbracket} \alpha_j^{-2} \gamma_j^{-4} &\asymp \sigma^2 \max_{j \in \llbracket 1,k\rrbracket} \exp(2(a-2p)j)\\ &= \begin{cases}
       \sigma^2, & \text{ if } 2p \geq a,\\
       \sigma^2 \exp(2(a-p)k), & \text{ if } 2p < a.
   \end{cases}
  \end{align*}
  Thus, $\ksigma = \lfloor \lvert \log \sigma \rvert/a \rfloor \wedge \lfloor \lvert \log \sigma \rvert/(2p) \rfloor$ leading to the rate $\sigma^2 \vee \sigma^{4p/a}$ in terms of $\sigma$.
\end{itemize}
 \section{Proofs of Section~\ref{secLower}}\label{appLower}

\subsection{Proof of Theorem~\ref{thmLowerEps1}}
By Markov's inequality one has for every estimator $\qtilde$ of $\qfunc(\theta)$ that
\begin{equation}\label{eqRedEps1}
        \inf_\qtilde \sup_{\theta \in \csol} \sup_{\lambda \in \cev} \Eb [ ( \qtilde - \qfunc(\sol) )^2 ] \geq \psi^2 \cdot \inf_\qtilde \sup_{\theta \in \csol} \sup_{\lambda \in \cev} \Pb (( \qtilde - \qfunc(\theta) )^2 \geq \psi^2),
\end{equation}
and we want to apply Lemma~\ref{lemLowerHypercube} from Appendix~\ref{appGenToolsLowerBounds} with $\psi = \frac{1}{2}L^2 \nu^{-1/2} \epsilon^2 \sqrt{\sum_{j=1}^{\kappa} \omega_j^4\alpha_j^{-4}}$.
For any $\tau = (\tau_1,\ldots,\tau_\kappa) \in \setpm^\kappa$ define $\sol^\tau$ via
\begin{equation*}
	\sol^\tau_i = \tau_i \cdot L\nu^{-1/4} \cdot \epsilon \cdot \frac{\omega_i \alpha_i^{-2}}{(\sum_{j=1}^{\kappa} \omega_j^4\alpha_j^{-4})^{1/4}} \qquad \text{for } i \in \llbracket 1,\kappa\rrbracket,
\end{equation*}
and $\sol^\tau_i = 0$ for $i > \kappa$.
Then, for any $\tau \in \setpm^\kappa$,
\begin{equation*}
	\sum_{j=1}^{\infty} (\theta_j^\tau)^2 \gamma_j^2 = L^2 \nu^{-1/2} \cdot\frac{\epsilon^2}{(\sum_{i=1}^{\kappa} \omega_i^4\alpha_i^{-4})^{1/2}} \ \sum_{j=1}^{\kappa} \omega_j^2\alpha_j^{-4} \gamma_j^2 \leq L^2
\end{equation*}
where we have used that $\epsilon^2 \omega_j^{-2}\gamma_j^2 \leq \epsilon^2 \omega_\kappa^{-2} \gamma_\kappa^2 \leq \nu^{1/2}(\sum_{i=1}^\kappa \omega_i^4\alpha_i^{-4})^{-1/2}$ by assumption.
Thus, $\theta^\tau \in \csol$ for any $\tau \in \setpm^\kappa$.
Further,
\begin{equation*}
	\qfunc(\sol^\tau) = L^2 \nu^{-1/2} \epsilon^2 \frac{\sum_{j=1}^{\kappa} \omega_j^4/\alpha_j^{4}}{\sqrt{\sum_{j=1}^{\kappa} \omega_j^4/\alpha_j^{4}}} = L^2 \nu^{-1/2} \epsilon^2 \sqrt{\sum_{j=1}^{\kappa} \omega_j^4/\alpha_j^{4}}.
\end{equation*}
Consider the probability measure $\mu$ on $\csol$ that is induced by the uniform distribution on the hypercube $\setpm^\kappa$ via the mapping $\setpm^\kappa \to \csol, \, \omega \mapsto \sol^\tau$.
Let $\Pb_\mu$ be the resulting distribution of the tuple $(X,Y)$ when $\lambda=\lambda^\circ$ for some fixed but arbitrary $\lambda^\circ \in \cev$, and analogously $\Pb_0$ the distribution of $(X,Y)$ when $\sol = 0 \in \csol$ and $\lambda = \lambda^\circ$.
Computing the $\chi^2$-distance between $\Pb_\mu$ and $\Pb_0$ yields
\begin{align*}
	\chi^2(\Pb_\mu, \Pb_0) = \int \left( \frac{\dd \Pb_\mu}{\dd \Pb_0} \right)^2 \dd \Pb_0 - 1 &= \prod_{j=1}^{\kappa} \frac{\exp(-\lambda_j^2\beta_j^2/\epsilon^2) + \exp(\lambda_j^2\beta_j^2/\epsilon^2)}{2}  - 1
\end{align*}
where $\beta_j = \epsilon L\nu^{-1/4} \cdot \frac{\omega_j\alpha_j^{-2}}{(\sum_{i=1}^{\kappa} \omega_i^4\alpha_i^{-4})^{1/4}}$.
Now, using the same reasoning as on page~130 in~\cite{tsybakov2009introduction}, it can be shown that there exists a constant $c_2 < \infty$ such that
\begin{equation*}
	\frac{\exp(-\lambda_j^2\beta_j^2/\epsilon^2) + \exp(\lambda_j^2\beta_j^2/\epsilon^2)}{2} \leq \exp \left(\frac{c_2\lambda_j^4 \beta_j^4}{\epsilon^4} \right).
\end{equation*}
Thus, denoting with $c_2$ and $c_3$ numerical constants that depend on $d$, we conclude
\begin{equation*}
	\chi^2(\Pb_\mu, \Pb_0) \leq \exp \left(c_2 \sum_{j=1}^{\kappa} \frac{\alpha_j^{4} \beta_j^4}{\epsilon^4} \right) - 1 \leq \exp(c_3) - 1
\end{equation*}
by definition of $\beta_j$.
Hence, all the assumptions of Lemma~\ref{lemLowerHypercube} are satisfied.
Application of this lemma together with~\eqref{eqRedEps1} implies
\begin{equation*}
	\inf_\qtilde \sup_{\sol \in \csol} \sup_{\ev \in \cev} \Pb( \vert \qtilde - \qfunc(\sol) \vert \geq \psi) \geq \frac{1}{4} \exp(-\beta)
\end{equation*}
where $\beta = \exp(c_3) - 1$, and putting this into~\eqref{eqRedEps1} implies the claim assertion.

\subsection{Proof of Theorem~\ref{thm:lower:epsilon:2}}\label{appLowerEps2}
For the proof of Statement \ref{lower:epsilon:2:a} we define for $\tau \in \setpm$ hypotheses $(\sol^\tau,\ev^\tau) \in \csol \times \cev$ with $\ev^1 = \ev^{-1} = \ev^\circ$ for some arbitrary but fixed $\ev^\circ \in \cev$.
Putting $\zeta = \min \{ 1/2, \sqrt{2}/(Ld\sqrt{\nu}) \} $, for $\tau \in \setpm$ the hypotheses concerning the solution are defined as $\sol^\tau = (\sol^{\tau}_j)_{j \in \N}$ where
\begin{equation*}
	\sol^{\tau}_\kappa = \frac{L}{2} \big( 1 + \tau \zeta \big)  \gamma_\kappa^{-1}
\end{equation*}
and $\sol^{\tau}_j = 0$ for $\tau \in \setpm$ and $j \neq \kappa$.
Then, $\sol^\tau \in \csol$ for $\tau \in \setpm$ since
\begin{equation*}
	\sum_{j=1}^\infty (\sol_j^{\tau})^2 \gamma_j^2 = (\sol_\kappa^{\tau})^2 \gamma_\kappa^2 \leq \frac{L^2}{4} \cdot 4 \gamma_\kappa^{-2} \gamma_\kappa^2 = L^2.
\end{equation*} 	
Denote by $\Pb_\tau$ the distribution of the tuple $(X,Y)$ if the true parameter is $(\sol^\tau, \lambda^\tau) = (\sol^\tau, \lambda^\circ)$.
Then, the Kullback-Leibler distance between $\Pb_1$ and $\Pb_{-1}$ depends only on the marginal distributions $\Pb_1^X$ and $\Pb_{-1}^X$, and we have by definition of $\nu$ and $\zeta$ that
\begin{equation*}
	\KL(\Pb_1, \Pb_{-1}) = \frac{1}{2\epsilon^2} \cdot (\zeta L  \ev_\kappa^\circ \gamma_\kappa^{-1})^2 \leq \frac{(\zeta L d \alpha_\kappa \gamma_\kappa^{-1})^2}{2\epsilon^2} \leq 1.
\end{equation*}
Now
\begin{equation*}
	\qfunc_1 - \qfunc_{-1} = \frac{L^2}{4} (1 + \zeta )^2 \omega_\kappa^2\gamma_\kappa^{-2} - \frac{L^2}{4} (1 - \zeta )^2 \omega_\kappa^2\gamma_\kappa^{-2} = L^2 \zeta \omega_\kappa^2 \gamma_\kappa^{-2},
\end{equation*}
and \eqref{eqRedSch} implies
\begin{equation*}
	\sup_{\sol \in \csol} \sup_{\ev \in \cev} \Eb [(\qtilde - \qfunc(\sol))^2] \geq \frac{1}{16} L^4 \zeta^2 \omega_\kappa^4 \gamma_\kappa^{-4}
\end{equation*}
which implies Statement~\ref{lower:epsilon:2:a} (again by definition of $\nu$).
	
For the proof of the parametric rate $\epsilon^2$ in \ref{thm:lower:epsilon:2:b} we use the same approach as in \ref{lower:epsilon:2:a} but define the two hypotheses $\sol^\tau = (\sol^\tau_{j})_{j \in \N}$, $\tau \in \setpm$ by $\sol_{1}^\tau = \left( 1 + \tau \epsilon \right) \cdot \zeta$ with $\zeta = \min \{ L/2, 1/(\sqrt{2}d) \}$, and $\sol_{j}^\tau = 0$ for $j \geq 2$.
	Then, $\sol^\tau \in \csol$ since
	\begin{equation*}
		\sum_{j=1}^\infty (\sol^{\tau}_j)^2 \gamma_j^2 = \left( 1 + \tau \epsilon \right)^2 \zeta^2 \leq 4 \cdot \frac{L^2}{4} = L^2
	\end{equation*}
	(recall that we assume $\epsilon \leq 1$ throughout the paper), and the Kullback-Leibler distance between $\Pb_1$ and $\Pb_{-1}$ satisfies
        \begin{equation*}
		\KL(\Pb_1, \Pb_{-1}) = \frac{( \ev_1^\circ (\sol_{1}^1 - \sol^{-1}_1) )^2}{2\epsilon^2} \leq 2d^2\zeta^2 \leq 1.
	\end{equation*}
	Since $\qfunc_1 - \qfunc_{-1} = 4\zeta^2\epsilon$, the reduction scheme~\eqref{eqRedSch} implies
	\begin{equation*}
		\sup_{\sol \in \csol} \sup_{\ev \in \cev} \Eb [ (\qtilde - \qfunc(\sol))^2] \geq \zeta^4 \epsilon^2,
	\end{equation*}
	and the statement follows since $\qtilde$ is arbitrary.

\subsection{Proof of Theorem~\ref{thmLowerSigma}}
As in the proof of Theorem~\ref{thm:lower:epsilon:2}, for the proof of both parts \ref{thmLowerSigma:a} and \ref{thmLowerSigma:b} we will use the reduction scheme described in Section~\ref{ssecReduction2} in the appendix wherefrom we borrow also the notation used in the rest of the proof.
	In order to prove \ref{thmLowerSigma:a} define for $\tau \in \setpm$ hypotheses $(\sol^\tau, \ev^\tau) \in \csol \times \cev$ by means of
	\begin{align*}
		&\sol_{\kappa}^\tau = Ld^{-1} (1 + \tau \zeta) \gamma_\kappa^{-1},  \quad \text{and} \quad \sol_{j}^\tau = 0 \text{ for } j \neq \kappa,\\
		&\ev_{\kappa}^\tau = (1- \tau \zeta) \alpha_\kappa, \quad \text{and} \quad \ev_{j}^\tau = \alpha_j \text{ for } j \neq \kappa,
	\end{align*}
	where we put $\zeta = \min \{ 1/\sqrt{2\nu}, 1 - d^{-1} \}$.
	Note that the estimate $d^{-2} \leq (1 - \zeta)^2 \leq 1 \leq (1 + \zeta)^2 \leq d^2$ holds where the last inequality follows by the inequality $2d-1\leq d^2$ which is true for $d \geq 1$.
	First, $\sol^\tau \in \csol$ for $\tau \in \setpm$ because
	\begin{equation*}
		\sum_{j=1}^{\infty} (\sol_{j}^\tau)^2 \gamma_j^2 = L^2 d^{-2} (1 + \tau \zeta)^2 \leq L^2.
	\end{equation*}
	Moreover $\ev \in \cev$, since
	\begin{equation*}
		\frac{1}{d^2} \alpha_\kappa^2 \leq (1 - \zeta)^2 \alpha_\kappa^2 \leq \alpha_\kappa^2 \leq (1 + \zeta)^2 \alpha_\kappa^2 \leq d^2 \alpha_\kappa^2,
	\end{equation*}
	and $d^{-2} \alpha_j^2 \leq (\lambda_j^\circ)^2 \leq d^2 \alpha_j^2$ for all $j\neq \kappa$ holds trivially.
	Note that $\sol^1 \ev^1 = \sol^{-1} \ev^{-1}$ by construction, and hence the Kullback-Leibler distance between $\Pb_1$ and $\Pb_{-1}$ depends only on the distance between the marginals $\Pb_1^{Y_\kappa}$ and $\Pb_{-1}^{Y_\kappa}$.
	Thus, by definition of $\zeta$
	\begin{equation*}
		\KL(\Pb_1,\Pb_{-1}) = \KL(\Pb_1^{Y_\kappa}, \Pb_{-1}^{Y_\kappa}) = \frac{1}{2\sigma^2} \cdot \left( 2 \zeta \alpha_\kappa \right)^2 \leq \frac{2\zeta^2 \alpha_\kappa^2}{\sigma^2} \leq 1.  
	\end{equation*}
	Further, it holds $\qfunc_1 - \qfunc_{-1} = \frac{L^2}{d^2} (1+\zeta)^2 \omega_\kappa^2 \gamma_\kappa^{-2} - \frac{L^2}{d^2} (1-\zeta)^2 \omega_\kappa^2 \gamma_\kappa^{-2}= \frac{4L^2}{d^2} \zeta \omega_\kappa^2\gamma_\kappa^{-2}$, and by applying \eqref{eqRedSch} we obtain
	\begin{equation*}
	\sup_{\sol \in \csol} \sup_{\ev \in \cev} \Eb [ ( \qtilde - \qfunc(\sol) )^2 ] \geq \frac{L^4}{d^4} \zeta^2 \omega_\kappa^4 \gamma_\kappa^{-4}.
	\end{equation*}
	Now \ref{thmLowerSigma:a} follows since $\sigma^2 \alpha_\kappa^{-2} \asymp_\nu 1$ and $\qtilde$ was arbitrary.
	For the proof of statement \ref{thmLowerSigma:b}, introduce for $\tau \in \setpm$ the hypotheses $(\sol^\tau, \ev^\tau) \in \csol \times \cev$ defined by
	\begin{align*}
		&\sol_{1}^\tau = (1+\tau \sigma \zeta) \frac{L}{2}, \qquad \text{and} \qquad \sol_{j}^\tau = 0 \text{ for } j \geq 2,\\
		&\ev_{1}^\tau = (1-\tau \sigma \zeta) , \qquad \text{and} \qquad \ev_{j}^\tau = \alpha_j \text{ for } j \geq 2 
	\end{align*}
	where $\zeta = \min  \{1/\sqrt{2}, 1-d^{-1} \}$.
	Then, grant to $\sigma \leq 1$, $\sol^\tau \in \csol$ follows from the calculation
	\begin{equation*}
		\sum_{j=1}^{\infty} (\sol_{j}^\tau)^2 \gamma_j^2 \leq (1+\tau \sigma \zeta)^2 \cdot \frac{L^2}{4} \leq L^2, 
	\end{equation*}
	and the inequality $\frac{1}{d^2} \leq (1-\sigma \zeta)^2 \leq 1 \leq (1 + \sigma \zeta)^2 \leq d^2$ shows that $\ev \in \cev$.
	By construction the Kullback-Leibler distance between $\Pb_1$ and $\Pb_{-1}$ depends only on the marginal distributions of $Y_1$, and hence
	\begin{equation*}
		\KL(\Pb_1, \Pb_{-1}) = \frac{1}{2\sigma^2} \big( \ev_{1}^1 - \ev_{1}^{-1} \big)^2 = \frac{1}{2\sigma^2} \cdot 4\sigma^2 \zeta^2 \leq 2\zeta^2 \leq 1.
	\end{equation*}
	Noting that $\qfunc_1 - \qfunc_{-1} = \sigma \zeta L^2$ we conclude from \eqref{eqRedSch} that
	\begin{equation*}
		\sup_{\sol \in \csol} \sup_{\ev \in \cev} \Eb [ ( \qtilde - \qfunc(\sol) )^2 ] \geq \frac{L^4}{16} \zeta^2 \sigma^2
	\end{equation*}
	which implies the claim assertion since $\qtilde$ was arbitrary. \section{Proofs of Section~\ref{secTesting}}

\subsection{Proof of Theorem~\ref{UpperSignalDetection}}
Consider the test statistic defined in~\eqref{eqDefQhatsd} with
	\begin{equation*}
	 \Ctilde \defeq \max \{ \sqrt 8 \delta^{-1/2}, 32d^2 \delta^{-1} \}.
	\end{equation*}
	Let us first show that the type I error can be bounded by $\delta/2$.
	Indeed, by a direct calculation, one has
	\begin{equation*}
	    \Pb_0 (\Deltahatsd = 1) = \Pb_0(\qhatksd \geq \Ctilde \phi_\epsilon^2) \leq \frac{\Eb_0 [\qhatksd^2]}{\Ctilde^2 \phi_\epsilon^4} \leq \frac{2 \epsilon^4 \sum_{j=1}^{\ksd} \alpha_j^{-4}}{\Ctilde^2 \phi_\epsilon^4} \leq \delta/2.
	\end{equation*}
	where we used $\Ctilde \geq 2 \delta^{-1/2}$.
	
	In order to bound the type II error, let $\sol \in \setAlt(C\phi_\epsilon^2)$ be arbitrary.
	We distinguish two cases.
	
	\emph{Case 1: $\sum_{j=1}^{\ksd} \sol_j^2 \geq 2 d^2\Ctilde \phi_\epsilon^2$}. In this case we have
	\begin{align*}
	 \Pb_\sol(\Deltahatsd = 0) = \Pb_\sol(\qhatksd \leq \Ctilde \phi_\epsilon^2) &\leq \Pb_\sol\Big(\qhatksd \leq \frac{1}{2d^2} \sum_{j=1}^{\ksd} \sol_j^2\Big)\\
	 &\leq \Pb_\sol\Big(\qhatksd - \Eb_\sol \qhatksd \leq -\frac{1}{2d^2} \sum_{j=1}^{\ksd} \sol_j^2\Big)
	\end{align*}
	where we have used that $\Eb_\sol \qhatksd \geq \sum_{j=1}^{\ksd} \sol_j^2 /d^2$.
	Thus,
	\begin{align*}
		\Pb_\sol(\Deltahatsd = 0) &\leq  \frac{4d^4\Eb_\sol [(\qhatksd - \Eb \qhatksd)^2]}{( \sum_{j=1}^{\ksd} \sol_j^2 )^2}\\
		&\leq 4d^4 \left\lbrace \frac{2 \epsilon^4 \sum_{j=1}^{\ksd} \alpha_j^{-4}}{( \sum_{j=1}^{\ksd} \sol_j^2 )^2} + \frac{4\epsilon^2 \sum_{j=1}^{\ksd} \alpha_j^{-4} (\lambda_j\sol_j)^2}{( \sum_{j=1}^{\ksd} \sol_j^2 )^2} \right\rbrace\\
		&\leq 4d^4 \left\lbrace \frac{2}{4d^4 \Ctilde^2} + \frac{4\epsilon^2\alpha_{\ksd}^{-2} \sum_{j=1}^{\ksd}\sol_j^2 }{( \sum_{j=1}^{\ksd} \sol_j^2 )^2} \right\rbrace\\
		&\leq \frac{2}{\Ctilde^2} + \frac{16d^4\epsilon^2\alpha_{\ksd}^{-2}}{2d^2 \Ctilde \phi_\epsilon^2}\\
		&\leq \frac{2}{\Ctilde^2} + \frac{8d^2}{\Ctilde}\\
		&\leq \delta/2
	\end{align*}
	where the last estimate is due to $\Ctilde \geq \max \{ \sqrt 8 \delta^{-1/2}, 32d^2 \delta^{-1} \}$.
	Since $\sol \in \setAlt$ was arbitrary, this shows that the type II error can be bounded from above by $\delta/2$ in this case.
	
	\emph{Case 2: $\sum_{j=1}^{\ksd} \sol_j^2 \leq 2 d^2\Ctilde \phi_\epsilon^2$}.
	First note that, by definition of $\setAlt$,
	\begin{align*}
		\Eb_\sol \qhatksd \geq \frac{1}{d^2} \sum_{j=1}^{\ksd} \sol_j^2 &= \frac{1}{d^2} \left\{ \sum_{j=1}^{\infty} \sol_j^2 - \sum_{j > \ksd} \sol_j^2 \right\}\\
		&\geq \frac{1}{d^2} \bigg( C^2 \phi_\epsilon^2 - \sum_{j>\ksd} \frac{\gamma_j^2}{\gamma_j^2} \sol_j^2 \bigg) \\
		&\geq \bigg( \frac{C^2}{d^2} - \frac{L^2\sqrt{\nu}}{d^2} \bigg) \phi_\epsilon^2.
	\end{align*}
	Now, since $\epsilon^2 \sum_{j=1}^{\ksd} \alpha_j^{-2} \sol_j^2 \leq \epsilon^2 \alpha_{\ksd}^{-2} \sum_{j=1}^{\ksd} \sol_j^2 \leq 2d^2 \Ctilde \phi_\epsilon^4$ in Case~2, we obtain, choosing $C$ such that $C^2/d^2 - L^2\sqrt{\nu}/d^2 - \Ctilde > 0$,
	\begin{align*}
		\Pb_\sol(\Deltahatsd = 0) &= \Pb_\sol(\qhatksd - \Eb_\sol \qhatksd \leq \Ctilde \phi_\epsilon^2 - \Eb_\sol \qhatksd) \\
		&\leq \Pb_\sol(\Eb_\sol \qhatksd - \qhatksd \geq (C^2/d^2 - L^2\sqrt{\nu}/d^2 - \Ctilde) \phi_\epsilon^2)\\
		&\leq \frac{\var (\qhatksd)}{\left( \frac{C^2}{d^2} - \Ctilde - \frac{L^2\sqrt{\nu}}{d^2} \right)^2 \phi_\epsilon^4}\\
		&\leq \frac{2 \epsilon^4 \sum_{j=1}^{\ksd} \alpha_j^{-4}}{\left( \frac{C^2}{d^2} - \Ctilde - \frac{L^2\sqrt{\nu}}{d^2} \right)^2 \phi_\epsilon^4} + \frac{4d^2\epsilon^2 \sum_{j=1}^{\ksd} \alpha_j^{-2}\sol_j^2 }{\left( \frac{C^2}{d^2} - \Ctilde - \frac{L^2\sqrt{\nu}}{d^2} \right)^2 \phi_\epsilon^4}\\
		&\leq \frac{2}{\left( \frac{C^2}{d^2} - \Ctilde - \frac{L^2\sqrt{\nu}}{d^2} \right)^2} + \frac{8d^4\Ctilde}{\left( \frac{C^2}{d^2} - \Ctilde - \frac{L^2\sqrt{\nu}}{d^2} \right)^2}
	\end{align*}
	and the last expression is bounded from above by $\delta/2$ for $C$ sufficiently large.
	Thus, the type II error is bounded by $\delta/2$ also in Case~2 and the statement of the proposition follows.
	
\subsection{Proof of Theorem~\ref{thmLowerSignalDetection}}

In order to prove the theorem, we will use Statement~\ref{lemLowerTesting1} from Lemma~\ref{lemLowerTesting}.
	For any $\tau \in \setpm^{\ksd}$ define $\sol^\tau$ by
	\begin{equation*}
		\sol_i^\tau = \tau_i \epsilon c \cdot \frac{\alpha_i^{-2}}{( \sum_{j=1}^{\ksd} \alpha_j^{-4} )^{1/4}}, \quad i \in \llbracket 1,\ksd\rrbracket,
	\end{equation*}
	and $\sol_i^\tau = 0$ for $i > \ksd$.
	Then, in analogy to the proof of Theorem~\ref{thmLowerEps1}, it can be shown that $\sol^\tau \in \csol$ provided that $c^2 \leq L^2\nu^{-1/2}$.
	Moreover, for all $\tau \in \setpm^{\ksd}$,
	\begin{align*}
		\qfunc(\theta^\tau) = c^2 \epsilon^2 \sqrt{\sum_{j=1}^{\ksd} \alpha_j^{-4 }} = c^2 \phi_\epsilon^2,
	\end{align*}
	and hence the law of $\ksd$ independent Rademacher random variables induces a probability distribution $\mu$ on the set $\setAlt(c\phi_\epsilon)$.
	Finally, again in analogy to the proof of Theorem~\ref{thmLowerEps1}, it holds
	\begin{equation*}
		\chi^2(\Pb_0, \Pb_\mu) \leq \exp(c_2c^2) - 1
	\end{equation*}
	for some fixed numerical constant $c_2=c_2(d) > 0$.
	Now, taking $c$ sufficiently small implies $\chi^2(\Pb_0, \Pb_\mu) \leq \delta^2$, and applying Lemma~\ref{lemLowerTesting} yields the claim assertion.

\subsection{Proof of Theorem~\ref{thmUpperGoF}}
We consider the test statistic defined in~\eqref{defDeltahatgof} where the conditions on $\Ctilde$ will be stated in the sequel. 
	We start by bounding the type I error from above by $\delta/2$:
	\begin{align*}
		\Pb_{0}(\Deltahatgof = 1) &= \Pb_{0}( \qhatkgof \geq \Ctilde \phi_{\epsilon, \sigma}^2 ) \leq \frac{\Eb_{0}[ \qhatkgof^2 ]}{\Ctilde^2 \phi_{\epsilon,\sigma}^4}.
	\end{align*}
	Now,
	\begin{align*}
		\Eb_{0}[ \qhat_{\kgof}^2 ] &\leq 672d^4 \epsilon^4 \sum_{j=1}^{\kgof}\alpha_j^{-4} + 2688d^4L^4 \sigma^4 \max_{j \in \llbracket 1,\kgof \rrbracket} \alpha_j^{-4}\gamma_j^{-4}\\
		&\leq (672d^4+2688 d^4L^4) \phi_{\epsilon,\sigma}^4,
	\end{align*}
	and hence $\Pb_{0}(\Deltahatgof = 1) \leq \delta/2$ provided that $\Ctilde^2 \geq 2(672d^4+2688d^4L^4) \delta^{-1}$.
	
	Now, we consider the type II error. In order to bound it from above by $\delta/2$, let $\sol \in \setAlt(C\phi_{\epsilon, \sigma})$ be arbitrary. It holds
	\begin{equation*}
		\Pb_\sol(\Deltahatgof = 0) = \Pb_\sol(\qhatkgof \leq \Ctilde \phi_{\epsilon, \sigma}^2) = \Pb_\sol( \qhatkgof - \sum_{j=1}^{\kgof} (\sol_j - \solcirc_j)^2 \leq \Ctilde \phi_{\epsilon, \sigma}^2 - \sum_{j=1}^{\kgof} (\sol_j - \solcirc_j)^2),
	\end{equation*}
	and as in the proof of Theorem~\ref{UpperSignalDetection} we consider two cases.
	
	\emph{Case 1: $\sum_{j=1}^{\kgof} (\sol_j - \solcirc_j)^2 \geq 2 \Ctilde \phi_{\epsilon, \sigma}^2$}.
	Then $\Ctilde \phi_{\epsilon,\sigma}^2 \leq \sum_{j=1}^{\kgof} (\sol_j - \solcirc_j)^2/2$, and thus
	\begin{align*}
		\Pb_\sol (\Deltahatgof = 0) &\leq \Pb_\sol \bigg(\qhat_{\kgof} - \sum_{j=1}^{\kgof} (\sol_j - \solcirc_j)^2 \leq -\sum_{j=1}^{\kgof} (\sol_j - \solcirc_j)^2/2\bigg)\\
		&=\Pb_\sol \bigg(-\qhat_{\kgof} + \sum_{j=1}^{\kgof} (\sol_j - \solcirc_j)^2 \geq \sum_{j=1}^{\kgof} (\sol_j - \solcirc_j)^2/2 \bigg)\\
		&\leq \frac{4\Eb_\sol [ ( \qhatkgof - \sum_{j=1}^{\kgof} (\sol_j - \solcirc_j)^2 )^2 ]}{( \sum_{j=1}^{\kgof} (\sol_j - \solcirc_j)^2 )^2}.
	\end{align*}
	Now, similarly as in the proof of Theorem~\ref{thmUpper},
	\begin{align*}
		\Eb_\sol [ ( \qhat_{\kgof} - \sum_{j=1}^{\kgof} (\sol_j - \solcirc_j)^2 )^2 ] &\leq 3 \sum_{i=1}^{3} \Eb \Tc_{\kgof i}^2
	\end{align*}
	where $\Tc_{\kgof 1}$, $\Tc_{\kgof 2}$, and $\Tc_{\kgof 3}$ are defined as in the proof of Theorem~\ref{thmUpper}.
	Following line by line the derivation of the upper bounds for the three terms on the right-hand side of the last display from Appendix~\ref{app:aux:upper}, we obtain
	\begin{align*}
		\Eb_\sol [ ( \qhat_{\kgof} - \sum_{j=1}^{\kgof} (\sol_j - \solcirc_j)^2 )^2 ] &\leq C(d,L) \phi_{\epsilon,\sigma}^4 + C(d,L)\phi_{\epsilon, \sigma}^2 \sum_{j=1}^{\kappa_2} (\sol_j - \solcirc_j)^2.
	\end{align*}
	Hence
	\begin{align*}
	  \Pb_\sol (\Deltahatgof = 0) &\leq \frac{C(d,L)\phi_{\epsilon, \sigma}^4}{\Ctilde^2 \phi_{\epsilon,\sigma}^4} + \frac{C(d,L) \phi_{\epsilon, \sigma}^2}{\sum_{j=1}^{\kgof} (\sol_j - \solcirc_j)^2 }\\
	  &\leq \frac{C(d,L)}{\Ctilde^2} + \frac{C(d,L)}{\Ctilde},
	\end{align*}
	and the last expression is smaller than $\delta/2$ for $\Ctilde$ sufficiently large\footnote{In order to make a lower bound on $\Ctilde$ explicit, it would be necessary to make the constants in Statement~\ref{UpperAux2} of Proposition~\ref{UpperAux} explicit, and we do not address this issue here.}.
	
	\emph{Case 2: $\sum_{j=1}^{\kgof} (\sol_j - \solcirc_j)^2 \leq 2 \Ctilde \phi_{\epsilon, \sigma}^2$}.
	First note that $\sol \in \setAlt(C\phi_{\epsilon, \sigma})$ implies
	\begin{align*}
		\sum_{j=1}^{\kgof} (\sol_j - \solcirc_j)^2 &= \sum_{j=1}^{\infty} (\sol_j - \solcirc_j)^2 - \sum_{j>\kgof} (\sol_j - \solcirc_j)^2\geq (C^2  - 2L^2\sqrt \nu) \phi_{\epsilon,\sigma}^2.
	\end{align*}
	Thus, for $C$ sufficiently large\footnote{\label{foot}Again, we are not able to give explicit bounds on $C$ due to the fact that the constant in Statement~\ref{UpperAux2} of Proposition~\ref{UpperAux} is not made explicit.},
	\begin{align*}
	\Pb_\sol(\Deltahatgof = 0) &\leq \Pb_\sol \bigg( \qhatkgof - \sum_{j=1}^{\kgof} (\sol_j - \solcirc_j)^2 \leq \Ctilde \phi_{\epsilon, \sigma}^2 - \sum_{j=1}^{\kgof} (\sol_j - \solcirc_j)^2 \bigg)\\
	&\leq \Pb_\sol\bigg( \qhatkgof - \sum_{j=1}^{\kgof} (\sol_j - \solcirc_j)^2 \leq (\Ctilde - C^2 + 2L^2 \sqrt \nu) \phi_{\epsilon, \sigma}^2 \bigg)\\
	&= \Pb_\sol\bigg( -\qhat_{\kgof} + \sum_{j=1}^{\kgof} (\sol_j - \solcirc_j)^2 \geq (C^2 - \Ctilde - 2L^2 \sqrt \nu) \phi_{\epsilon, \sigma}^2 \bigg)\\
	&\leq \frac{\Eb_\sol [(\qhatkgof - \sum_{j=1}^{\kgof} (\sol_j - \solcirc_j)^2)^2]}{(C^2 - \Ctilde - 2L^2 \sqrt \nu)^2 \phi_{\epsilon, \sigma}^4}.
	\end{align*}
	Now, as in the first case,
	\begin{align*}
		\Eb_\sol [ ( \qhat_{\kgof} - \sum_{j=1}^{\kgof} (\sol_j - \solcirc_j)^2 )^2 ] &\leq C(d,L) \phi_{\epsilon,\sigma}^4 + C(d,L)\phi_{\epsilon, \sigma}^2 \sum_{j=1}^{\kappa_2} (\sol_j - \solcirc_j)^2 \leq C(d,L) \phi_{\epsilon,\sigma}^4.
	\end{align*}
	Hence,
	\begin{align*}
	  \Pb_\sol(\Deltahatgof = 0) &\leq \frac{C(d,L) \phi_{\epsilon,\sigma}^4}{(C^2 - \Ctilde - 2L^2 \sqrt \nu)^2 \phi_{\epsilon, \sigma}^4}
	\end{align*}
	and $\Pb_\sol(\Deltahatgof = 0) \leq \delta/2$ provided that $C$ is sufficiently large\footnote{See Footnote~\ref{foot}.}.

\subsection{Proof of Theorem~\ref{LowerGoF}}
The case that $\phi_{\epsilon, \sigma}^2 = \epsilon^2 \sqrt{\sum_{j=1}^{\kgof} \alpha_j^{-4}}$ is dealt with in analogy to the proof of Theorem~\ref{thmLowerSignalDetection}, and thus omitted (the additional assumption $\phi_{\epsilon,\sigma}^4 \asymp_\nu \gamma_{\kgof}^{-4}$ is only exploited in this case).
	Thus, we consider the case $\phi_{\epsilon, \sigma}^2 = \sigma^2 \max_{j \in \llbracket 1,\kgof\rrbracket} \alpha_j^{-2}\gamma_j^{-2}$, and put
	$\kappa = \argmax_{j \in \llbracket 1,\kgof\rrbracket } \alpha_j^{-2}\gamma_j^{-2}$.
	We apply Statement~\ref{lemLowerTesting2} of Lemma~\ref{lemLowerTesting} to the testing problem
	\begin{equation*}
		\Hc_0\colon \sol = \sol^\circ, \lambda = \lambda^\circ \quad \text{against} \quad \Hc_1\colon \sol = \sol^1, \lambda = \lambda^1
	\end{equation*}	
	where $\sol^1_{j}=\sol^\circ_{j}$ for $j \neq \kappa$, $\sol^1_{\kappa} =\frac{1-\ctilde \sigma \alpha_{\kappa}^{-1}\gamma_{\kappa}^{-1}}{1+\ctilde \sigma \alpha_{\kappa}^{-1}\gamma_{\kappa}^{-1} } \cdot \sol^\circ_{\kappa}$, $\lambda^\circ_{j}=\lambda^1_{j}=\alpha_j$ for $j \neq \kappa$, $\lambda^\circ_{\kappa} = (1-\ctilde \sigma \alpha_{\kappa}^{-1}\gamma_{\kappa}^{-1} ) \alpha_{\kappa}$, and $\lambda^1_{\kappa} = (1+\ctilde \sigma \alpha_{\kappa}^{-1}\gamma_{\kappa}^{-1} ) \alpha_{\kappa}$.
	First, it is easily checked that $\sol^1 \in \csol$ and $\lambda^\circ, \lambda^1 \in \cev$ for $\ctilde$ sufficiently small.
	Further, since $\sigma \alpha_{\kappa}^{-1} \gamma_{\kappa}^{-1} \leq 1$ by definition of $\phi_{\epsilon,\sigma}$,
	\begin{equation*}
		\Vert \solcirc - \sol^1 \Vert_2^2 = (\solcirc_{\kappa} - \sol^1_{\kappa})^2 = \frac{4\ctilde^2\sigma^2  \alpha_{\kappa}^{-2}\gamma_{\kappa}^{-2}  }{1 + \ctilde\sigma \alpha_{\kappa}^{-1} \gamma_{\kappa}^{-1}} \cdot (\solcirc_{\kappa})^2 \geq \frac{4\ctilde^2}{1 + \ctilde} \cdot (\solcirc_{\kappa})^2 \cdot \phi_{\epsilon, \sigma}^2 \eqdef c^2 \phi_{\epsilon, \sigma}^2,
	\end{equation*}
	and $c \to 0$ if and only if $\ctilde \to 0$, showing that $\sol^1 \in \setAlt(c\phi_{\epsilon,\sigma})$ for $\ctilde$ sufficiently small.
	Thus, it remains to show that the Kullback-Leibler distance between the two hypotheses can be made arbitrary small by choosing the parameter $\ctilde$ sufficiently small.
	By construction, $\KL(\Pb^{X,Y}_0, \Pb^{X,Y}_1) = \KL(\Pb^{Y_{\kappa}}_0, \Pb^{Y_{\kappa}}_1)$, and hence
	\begin{align*}
		\KL(\Pb^{X,Y}_0, \Pb^{X,Y}_1) = \frac{2}{\sigma^2} \ctilde^2 \sigma^2 \gamma_{\kappa}^{-2} \alpha_{\kappa}^{-2} \alpha_{\kappa}^2 \leq 2\ctilde^2,
	\end{align*}
	and $2\ctilde^2 \leq 2\delta^2 \Leftrightarrow \ctilde \leq \delta$ implies the claim assertion grant to Statement~\ref{lemLowerTesting2} of Lemma~\ref{lemLowerTesting} with $\mu = \delta_{(\sol^1, \lambda^1)}$. 
\printbibliography

\end{document}